\numberwithin{equation}{section}
\newtheorem{theorem}{Theorem}[section]
\newtheorem*{theoremm}{Theorem}
\newtheorem{lemma}{Lemma}[section]
\newtheorem{corollary}{Corollary}[section]
\newtheorem*{corollaryy}{Corollary}
\newtheorem{proposition}{Proposition}[section]
\newtheorem{definition}{Definition}[section]
\def\cal{\mathcal}
\let\Re=\undefined
\DeclareMathOperator{\Re}{Re}
\let\Im=\undefined
\DeclareMathOperator{\Im}{Im}
\newcommand{\nn}{\nonumber}
\newcommand{\nt}{\noindent}
\newcommand{\bsl}{\backslash}
\newcommand{\pt}{\partial}
\newcommand{\ti}{\tilde}
\newcommand{\lt}{\left}
\newcommand{\rt}{\right}
\newcommand{\dsp}{\displaystyle}
\DeclareMathOperator{\diam}{\rm diam}
\DeclareMathOperator{\ppr}{Pr}
\newcommand{\br}{{\mathbb{R}}}
\newcommand{\bz}{{\mathbb{Z}}}
\newcommand{\bc}{{\mathbb{C}}}
\newcommand{\bi}{{\mathbf{i}}}
\newcommand{\bii}{{\mathbf{I}}}
\renewcommand{\nn}{{\nabla}}
\renewcommand{\l}{\lambda}
\newcommand{\vp}{\varphi}
\newcommand{\la}{\langle}
\newcommand{\ra}{\rangle}
\newcommand{\cp}{\mathcal{P}}
\newcommand{\ca}{\mathcal{A}}
\newcommand{\ccc}{\mathcal{C}}
\newcommand{\cw}{\mathcal{W}}
\begin{document}

\title[It\^o diffusions, capacity and Schr\"odinger operator]
{It\^o diffusions, modified capacity and harmonic measure.
Applications to Schr\"odinger operators.}

\author[S. Denisov, S. Kupin]{S. Denisov, S. Kupin}

\address{Mathematics Department, University of Wisconsin--Madison,   480 Lincoln Dr.,  Madison, WI 53706 USA}
\email{denissov@math.wisc.edu}

\address{IMB, Universit\'e Bordeaux 1, 351 cours de la Lib\'eration, 33405 Talence Cedex France}
\email{skupin@math.u-bordeaux1.fr}

\keywords{Absolutely continuous spectrum, Schr\"odinger operator,
It\^o stochastic calculus, Feynman-Kac type formulae, special
potential theory, modified capacity, modified harmonic measure}

\subjclass{Primary: 35P25, Secondary: 31C15, 60J45.}


\begin{abstract}
We observe that some special It\^o diffusions are related to
scattering properties of a Schr\"odinger operator on $\br^d, d\geq
2$. We introduce Feynman-Kac type formulae for these stochastic
processes which lead us to results on the preservation of the a.c.
spectrum of the Schr\"odinger operator. To better understand the
analytic properties of the processes,  we construct and study a
special version of the potential theory.  The modified capacity and
harmonic measure play an important role in these considerations. We
also give various applications to Schr\"odinger operators.
\end{abstract}

\maketitle

\section*{Introduction}\label{s0}
The main motivation for this paper was  an exciting open problem in the spectral
theory of a multi-dimensional Schr\"o\-din\-ger operator. Namely, we are interested in the spectral properties of the operator
\begin{equation}\label{SO}
H=H_V = -\Delta +V
\end{equation}
 acting on $L^2(\br^d), d\geq 2$.  One of the central problems of the topic is to
 understand under what conditions on $V$ the absolutely continuous (a.c., to be brief) spectrum $\sigma_{ac}(H)$ is present and what its essential support is. Putting this a bit differently, we want to know what perturbations $V$ preserve the a.c. spectrum as compared to $H_0=-\Delta$. Of course,
$\sigma(H_0)=\sigma_{ac}(H_0)=\br_+$. The related physical intuition is that if the potential decays sufficiently fast at infinity, there is some a.c. spectrum which means that the scattering properties of the medium modeled by the operator are not too bad.

For instance, Deift-Killip \cite{dk1} proved in one-dimensional case
that the condition $V\in L^p(\br_+)$ leads to $\sigma_{ac}(H)=\br_+$
if $p\leq 2$. If $p>2$, there are some $V$ such that the spectrum is
singular \cite{kls}. For  $d\geq 2$, a counterpart of the result by
Deift and Killip
 was conjectured by Simon \cite{rarry}. His
question is: does
\begin{equation}\label{e10001}
\int_{\mathbb{R}^d} \frac{V^2(x)}{|x|^{d-1}+1}\,dx<\infty
\end{equation}
imply that $\sigma_{ac}(H)=\br_+$?  Here one might also need more regularity of the potential $V$ for the operator $H_V$ to be well-defined,  but this is a minor issue.

While being wide open for the case $\br^d$ or $\bz^d, d\geq 2$, the problem has a neat solution on a Cayley tree (Bethe lattice), see Denisov \cite{d2}, Denisov-Kiselev \cite{d1}. Since this construction serves as a starting point to the present investigation, it seems instructive to recall some of its details.

Assume that the Cayley tree $\mathbb{B}$ is rooted with the root (the
origin) denoted by $O$, $O$ has two neighbors and other vertices
have one ascendant and two descendants (the actual number of descendants is not important but it should be the
same for all points $X\neq O$). The set of vertices of the tree is denoted by $\mathbb{V}(\mathbb{B})$. For an $f\in \ell^2(\mathbb{V}(\mathbb{B}))$, define the free Laplacian by
$$
({H}_0f)_n=\sum_{{\rm dist}(i,n)=1}f_i, \quad n\in \mathbb{V}(\mathbb{B})
$$
One can show rather easily \cite[Sect. 2]{d2} that the spectrum of ${H}_0$ is purely a.c. on $[-2\sqrt 2, 2\sqrt 2]$.
Assume now that $V$ is a bounded potential on $\mathbb{V}(\mathbb{B})$ so that
$$
{H}={H}_0+V
$$
is well-defined. Denote the spectral measure related to delta
function at $O$ by $\sigma_O$; the density of its absolutely continuous part  is  $\sigma'_O$. Take $w(\lambda)=(4\pi)^{-1}(8-\lambda^2)^{1/2}$ and let
$\rho_O(\lambda)=\sigma'_O(\lambda)w^{-1}(\lambda)$.

Consider also the probability space on the set of nonintersecting paths in
$\mathbb{B}$ that go from the origin to infinity. This space is
constructed by assigning the Bernoulli random variable to each
vertex and the outcome of Bernoulli trial ($0$ or $1$) then
corresponds to whether the path (stemming from the origin) goes
to the ``left" or to the ``right" descendant at the next step. Notice
also that (discarding a set of Lebesgue measure zero) each path is
in one-to-one correspondence with a point on the interval $[0,1]$ by
the binary decomposition of reals. In this way, the ``infinity"
for $\mathbb{B}$ can be identified with $[0,1]$. For any $t\in
[0,1]$, we can then define the function $\phi$ as
$$
\phi(t)=\sum_{n=1}^\infty V^2(x_n)
$$
where the path $\{x_n\}\subset \mathbb{V}(\mathbb{B})$ corresponds
to $t$. This function does not have to be finite at any point $t$
but it is well-defined and is Lebesgue measurable.  See \cite{d1}
for
\begin{theorem}\label{tree}
For any bounded $V$,
\begin{eqnarray*}\label{bs}
\int\limits_{-2\sqrt 2}^{2\sqrt 2} w(\lambda)\log
\rho_O(\lambda)d\lambda &\ge& \log \mathbb{E} \left\{ \exp\left[
-\frac 14 \sum\limits_{n=1}^\infty V^2(x_n)\right]\right\}\\
&=&\log \int\limits_0^1\exp\left(-\frac 14\phi(t)\right)dt
\end{eqnarray*}
where the expectation is taken with respect to all paths
$\left\{x_n\right\}$ and the probability space defined above. In
particular, if the right hand side is finite, then $[-2\sqrt
2,2\sqrt2]\subseteq \sigma_{ac}({H})$.
\end{theorem}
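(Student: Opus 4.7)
My plan is to derive a self-similar sum rule for the m-functions on subtrees of $\mathbb{B}$, iterate it down the tree, and pass to the limit. For each vertex $X\in\mathbb{V}(\mathbb{B})$, let $H_X$ denote the restriction of $H$ to the subtree rooted at $X$ and set $m_X(z):=\la\delta_X,(H_X-z)^{-1}\delta_X\ra$ for $\Im z>0$. Self-similarity combined with a rank-one perturbation calculation produces the recursion
\[
  m_X(z)=\frac{1}{V(X)-z-m_{X_1}(z)-m_{X_2}(z)},
\]
where $X_1,X_2$ are the two children of $X$. The free fixed-point equation $m=1/(-z-2m)$ has solution $m^{(0)}(z)=\tfrac14(-z+i\sqrt{8-z^2})$, yielding $\Im m^{(0)}(\lambda+i0)=\pi w(\lambda)$ and $|m^{(0)}(\lambda+i0)|^2=1/2$ on the spectrum, so $\rho_X=\Im m_X/\Im m^{(0)}$.

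Taking imaginary parts in the recursion gives the exact pointwise identity $\rho_X(\lambda)=|m_X(\lambda+i0)|^2(\rho_{X_1}(\lambda)+\rho_{X_2}(\lambda))$. I set $F(X):=\int w\log\rho_X\,d\lambda$, $G(X):=e^{F(X)}$, and appeal to the concavity of the geometric-mean functional $\phi:\rho\mapsto\exp\int w\log\rho\,d\lambda$. Concavity follows from computing $\phi''(\rho)(h,h)=\phi(\rho)\bigl[(\int wh/\rho)^2-\int w(h/\rho)^2\bigr]\leq 0$ by Cauchy--Schwarz (noting $w\,d\lambda$ is a probability measure). Applying this to the identity produces the one-step sum rule
\[
  G(X)\ \geq\ \exp\Bigl(\int w\log(2|m_X|^2)d\lambda\Bigr)\cdot\tfrac12\bigl(G(X_1)+G(X_2)\bigr).
\]

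The analytic heart of the argument will be the Szeg\H{o}-type lower bound on the prefactor. Explicitly, one needs
\[
  \int w(\lambda)\log\bigl(2|m_X(\lambda+i0)|^2\bigr)d\lambda\ \geq\ -\tfrac14V^2(X) - D_X,
\]
where $D_X\geq 0$ captures the deviation of $M_X:=m_{X_1}+m_{X_2}$ from the free reference $2m^{(0)}$. The constant $1/4$ is sharp and can be verified in the special case $M_X=2m^{(0)}$: a direct computation using the log-potential of the semicircle (our $w\,d\lambda$ is an affine rescaling of the Wigner distribution) gives $\int w\log|V(X)-\lambda-2m^{(0)}|^2\,d\lambda \le \log 2 + V^2(X)/4$, with equality attained, e.g., at $V(X)=\pm 1$. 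For general Herglotz $M_X$, the defect $D_X$ arises via the Poisson--Jensen representation of $\log\phi_X(z):=\log(V(X)-z-M_X(z))$ on $\mathbb{C}_+$ and the Herglotz/subharmonic properties of $M_X-2m^{(0)}$. This step I expect to require the most care, since one must both identify the sharp constant and verify that, once summed over the tree with the Bernoulli weights, the $D_X$ contributions telescope and can be dropped.

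With the one-step inequality in hand, I introduce the truncation $V_N:=V\cdot\mathbf{1}_{\{|X|\leq N\}}$; for this truncation $G_N(X)\equiv 1$ whenever $|X|>N$. Iterating the one-step inequality $N+1$ times from $O$ unrolls into an arithmetic-mean expansion indexed by Bernoulli paths of length $N+1$, producing (after absorbing the telescoped defects)
\[
  G_N(O)\ \geq\ \be\Bigl[\exp\bigl(-\tfrac14\textstyle\sum_{n=0}^{N} V^2(x_n)\bigr)\Bigr].
\]
Letting $N\to\infty$: the right-hand side converges by bounded convergence (the integrand is bounded by $1$ and non-increasing in $N$), while $\limsup_N G_N(O)\leq G(O)$ by upper semicontinuity of the Szeg\H{o} integral under weak convergence of $\sigma_O$, which itself follows from strong resolvent convergence $H_{V_N}\to H_V$ for bounded $V$. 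This yields the stated inequality; any discrepancy between the iteration sum starting at $n=0$ and the theorem sum starting at $n=1$ is absorbed by the harmless factor $e^{-V^2(O)/4}\leq 1$. The principal obstacle throughout is the Szeg\H{o} estimate of the third paragraph -- both the identification of the sharp constant and the handling of the subtree defect.
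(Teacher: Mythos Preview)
The paper does not actually prove Theorem~\ref{tree}; it is quoted from \cite{d1} with the single remark that ``the proof of the theorem is based on the adjusted form of sum rules in the spirit of Killip--Simon \cite{ks}.'' Your outline is therefore consistent with the one sentence the paper offers: the $m$-function recursion, the identity $\rho_X=|m_X|^2(\rho_{X_1}+\rho_{X_2})$, and the truncation/semicontinuity endgame are all correct, and the superadditivity of the geometric mean $\phi(\rho)=\exp\int w\log\rho$ (concave $+$ 1-homogeneous) is a clean device for producing the Bernoulli average $\tfrac12(G(X_1)+G(X_2))$.

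The genuine gap is exactly where you flag it, and it is not merely a matter of care. After the concavity step your iteration needs $\phi(2|m_X|^2)\ge e^{-V^2(X)/4}$, and this \emph{fails} for general Herglotz $M_X=m_{X_1}+m_{X_2}$: if a subtree density $\rho_{X_j}$ is sharply peaked, $\Im M_X$ can be arbitrarily large on a set of positive $w$-measure, driving $|m_X|=|V-\lambda-M_X|^{-1}$ to zero and $\int w\log(2|m_X|^2)$ to $-\infty$, with no compensation from $V(X)$. (Incidentally, in the free-subtree case $M_X=2m^{(0)}$ the computation gives an \emph{equality} $\int w\log(2|m_X|^2)=-V^2(X)/4$ for all $|V(X)|\le\sqrt2$, not just at $\pm1$.) You then introduce a defect $D_X\ge0$, but once you do, the iteration yields $G(O)\ge \mathbb{E}\bigl[\exp(-\tfrac14\sum V^2(x_n)-\sum D_{x_n})\bigr]$, and there is no mechanism for $\sum D_{x_n}$ to telescope: the concavity inequality has already discarded the exact structure that would make adjacent defects cancel. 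In the Killip--Simon framework one instead proves a one-step \emph{identity} (a step-by-step sum rule) in which the non-$V^2$ terms are manifestly nonnegative and can simply be dropped; the telescoping is built into the equality, not recovered after an inequality. Your proposal needs either that identity or a different organization of the defect---as written, the ``telescope and drop'' claim is the whole theorem.
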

The proof of  the theorem is based on the adjusted form of sum rules in the spirit of Killip-Simon \cite{ks}. Higher order sum rules are applied to different classes of potentials in Kupin \cite{ku}.

Notice that $\phi$ is always nonnegative, therefore the right hand
side is bounded away from $-\infty$ iff $V\in \ell^2$ with a
positive probability. This is the true multi-dimensional
$L^2$-condition. The simple application of Jensen's inequality then
immediately implies that the estimate
\[
\int \phi(t)dt=\sum\limits_{n=0}^\infty 2^{-n} \sum\limits_{{\rm
dist}(X,O)=n} V^2(X)<\infty
\]
guarantees  $[-2\sqrt 2,2 \sqrt2]\subseteq \sigma_{ac}({H})$. The
last condition is precisely the analogue of (\ref{e10001}) for the
Cayley tree. Indeed, the factor $2^{n}$ is the ``area" of the sphere
of radius $n$ in $\mathbb{B}$ and is exactly the counterpart of
$|x|^{d-1}$ in (\ref{e10001}). \medskip

\nt{\bf Remark.} Assume that in the above model there is a set $F$ of vertices on which $V$ is uncontrollable and $V=0$ on $\mathbb{V}(\mathbb{B})\backslash  F$. Then, the theorem says that the a.c. spectrum of $H$ contains $[-2\sqrt 2, 2\sqrt 2]$ as long as there are ``enough'' paths that do not visit $F$.

\medskip
The substantial part of this paper is devoted to the study of analogous phenomena in the case of $\mathbb{R}^d$ (see, e.g., Theorem \ref{eas}).  In its first part,  we obtain a result similar to
Theorem~\ref{tree} for a Schr\"odinger operator \eqref{SO}. Besides
various technical difficulties, an immediate problem we run into  is
the question of how to introduce a probability space of paths in
$\br^d$ similar to the one appearing in \eqref{bs}.

It turns out rather naturally that the right probability space of paths is given by
 It\^o's stochastic calculus or, more precisely, by a (stationary) It\^o stochastic differential equation of the form
$$
dX_t=p(X_t)dt+ dB_t,\quad X_0=x^0
$$
where $B_t$ is a $\br^d$--Brownian motion. The solution $\{X_t\}$ to
this equation is called  the It\^o diffusion. The coefficient $p$ is
termed a drift. We refer to the nice books by \O ksendal \cite{Oks}
and Bass \cite{baas} in this connection; see also Section~\ref{s1}
for more details.

Consider now the diffusion $\{X_t\}$ defined by \eqref{stochastic}.
As a corollary of Feynman-Kac type formulae proved in
Section~\ref{s2}, we get the following theorem.

\begin{theoremm}[{= Theorem \ref{th2}}]\label{th222}
Let $V\geq 0$  be continuous, $f\in L^2(\mathbb{R}^d), f\geq 0$ and
$f$ have a compact support. Let $\sigma_f$ be the spectral measure
of $f$ with respect to $H_V$. We have
\begin{equation}
\exp\left[\frac{1}{2\pi}\int\limits_{-\infty}^\infty \frac{\log
\sigma'_f(k^2)}{1+k^2}dk\right]\ge C_f\int f(x^0)\mathbb{E}_{x^0}
\left[ \exp\left(-\frac 12\int\limits_0^\infty V(X_\tau)
 d\tau\right)\right]dx^0 \label{e1002}
\end{equation}
and  the constant $C_f>0$ does not depend on $V$.
\end{theoremm}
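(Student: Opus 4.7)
The plan is to follow the strategy used for Theorem~\ref{tree} on the Cayley tree: relate the Szegő-type integral on the left-hand side of \eqref{e1002} to the modulus at an interior point of an analytic function in the upper half-plane, and identify the latter with the It\^o-diffusion expectation on the right-hand side via the Feynman-Kac formulae of Section~\ref{s2}.

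The starting point is the outer function $\mathcal{O}_f$ in $\{\Im k>0\}$ with $|\mathcal{O}_f(k+i0)|^2 = \sigma'_f(k^2)$. The standard formula for outer factors in the upper half-plane gives
\begin{equation*}
|\mathcal{O}_f(i)| \;=\; \exp\!\left[\frac{1}{2\pi}\int_{-\infty}^\infty \frac{\log \sigma'_f(k^2)}{1+k^2}\,dk\right],
\end{equation*}
so that the left-hand side of \eqref{e1002} is exactly $|\mathcal{O}_f(i)|$. By the extremal property of outer functions, any analytic $\Psi$ on $\{\Im k>0\}$ in a suitable Nevanlinna class satisfying $|\Psi(k+i0)|\leq \sqrt{\sigma'_f(k^2)}$ on the real axis automatically satisfies $|\Psi(i)|\leq |\mathcal{O}_f(i)|$. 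It therefore suffices to produce one such $\Psi = \Psi_f$ whose value $|\Psi_f(i)|$ is bounded below by a constant $C_f$ depending only on $f$ times the right-hand side of \eqref{e1002}.

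Such a $\Psi_f$ is built from a Weyl--Titchmarsh--Kodaira spectral expansion of $\sigma'_f$ in terms of generalized eigenfunctions $\psi_{k,\omega}$ of $H_V$ together with a Cauchy--Schwarz average over the sphere $S^{d-1}$: since
\begin{equation*}
\sigma'_f(k^2) \;=\; c_d\, k^{d-1}\int_{S^{d-1}} |\langle f,\psi_{k,\omega}\rangle|^2\,d\omega,
\end{equation*}
an appropriately normalized combination of $k\mapsto \langle f,\psi_{k,\omega}\rangle$ (multiplied by a branch of $k^{(d-1)/2}$ analytic in the upper half-plane) yields an analytic $\Psi_f$ with the required boundary domination. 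At $k=i$, the analytic continuations of $\psi_{k,\omega}$ become exponentially decaying solutions of $(H_V+1)\psi=0$, and pairing with $f$ recovers $\langle f,(H_V+1)^{-1}f\rangle$ up to an $f$-only multiplicative constant. Finally, the Feynman-Kac formulae of Section~\ref{s2}, evaluated at spectral parameter $-1$, give
\begin{equation*}
\langle f,(H_V+1)^{-1}f\rangle \;=\; \int f(x^0)\,\mathbb{E}_{x^0}\!\left[\exp\!\left(-\tfrac12\int_0^\infty V(X_\tau)\,d\tau\right)\right]dx^0,
\end{equation*}
where the drift in \eqref{stochastic} is chosen so that a Girsanov change of measure absorbs the $e^{-t}$ weight of the standard Feynman-Kac representation of $(H_0+1)^{-1}$ into the path law of $\{X_t\}$, leaving only the $\exp(-\tfrac12\int V)$ factor. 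Combining these three steps yields \eqref{e1002}, with $C_f$ absorbing the bounded $f$-dependent constants that appear along the way.

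The step I expect to be most delicate is the construction of $\Psi_f$: in the multi-dimensional setting -- where no classical Jost function is available -- producing a concrete analytic $\Psi_f$ on $\{\Im k>0\}$ whose boundary modulus is pointwise bounded by $\sqrt{\sigma'_f(k^2)}$ with $V$-independent constants, and whose value at $k=i$ is cleanly related to $\langle f,(H_V+1)^{-1}f\rangle$, requires careful use of the Weyl--Titchmarsh--Kodaira spectral theory of $H_V$ on $\mathbb{R}^d$ in conjunction with the stochastic representations of Section~\ref{s2}. One also has to handle the branch of $k^{(d-1)/2}$ near $k=0$ and any inner factor of $\Psi_f$; both contribute only $f$-dependent constants to the final estimate. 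Once $\Psi_f$ is in hand, the rest of the proof is a routine outer-function extremality bound plus a single invocation of the formulae from Section~\ref{s2}.
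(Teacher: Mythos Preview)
Your overall plan --- pass from the Szeg\H{o} integral to a value at $k=i$ via complex analysis in the upper half-plane, then identify that value stochastically --- is the paper's strategy. But two steps in your execution are off, and the second is a genuine gap.

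First, the paper does not build a single scalar analytic $\Psi_f$ and invoke outer-function extremality. It works directly with the \emph{vector-valued} amplitude $A_f(\theta,k)$, the far-field coefficient of $(H_V-k^2)^{-1}f$ from \eqref{as}--\eqref{ba}, and uses that $g(k)=\log\|A_f(\,\cdot\,,k)e^{ikC_f}\|_{L^2(\Sigma_1)}$ is \emph{subharmonic} on $\mathbb{C}^+$. The key identity is \eqref{fact}: $\sigma'_f(k^2)=C|k|^{d-2}\|A_f(\,\cdot\,,k)\|_{L^2(\Sigma_1)}^2$. A harmonic-measure (mean-value) inequality on exhausting domains, justified by the growth bounds \eqref{j1}--\eqref{j3} near $k=0$ and at infinity, gives $\frac{1}{\pi}\int_{\mathbb{R}}\frac{\log\|A_f\|_{L^2(\Sigma_1)}}{1+k^2}\,dk\ge\log\|A_f(\,\cdot\,,i)\|_{L^2(\Sigma_1)}-C_f$. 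This sidesteps the Nevanlinna-class and inner-factor issues you flag: subharmonicity of the log-norm of an analytic family is all that is needed, and no scalar outer function has to be produced.

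Second --- and this is the real problem --- your identification at $k=i$ is wrong. The relevant quantity is \emph{not} $\langle f,(H_V+1)^{-1}f\rangle$; it is $\|A_f(\,\cdot\,,i)\|_{L^2(\Sigma_1)}$, the $L^2(\Sigma_1)$-norm of the far-field amplitude of $(H_V+1)^{-1}f$. Since $V,f\ge0$ one has $A_f(\theta,i)\ge0$, hence $\|A_f(\,\cdot\,,i)\|_{L^2(\Sigma_1)}\gtrsim\int_{\Sigma_1}A_f(\theta,i)\,d\theta=\int f(x^0)\bigl(\int_{\Sigma_1}a_{x^0}(\theta,i)\,d\theta\bigr)\,dx^0$ by \eqref{ba}. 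It is \emph{this} spherical average of the amplitude that Theorem~\ref{th1} converts into $C_1\,\mathbb{E}_{x^0}\bigl[\exp(-\tfrac12\int_0^\infty V(X_\tau)\,d\tau)\bigr]$; the drift in \eqref{stochastic} is tailored to the \emph{asymptotics} of the Green's function, not to the resolvent kernel itself. Your displayed Feynman--Kac identity cannot hold as written: its left side is quadratic in $f$, its right side linear. Replacing ``$\langle f,(H_V+1)^{-1}f\rangle$'' by the amplitude and invoking Theorem~\ref{th1} repairs the argument and aligns it with the paper. Finally, the passage from compactly supported $V$ to general continuous $V\ge0$ is done by truncation, weak-$*$ convergence of spectral measures, and upper semicontinuity of the entropy --- a step you should include.
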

Above, $\sigma'_f$ stands for the density of the absolutely
continuous component of $\sigma_f$. This theorem can be viewed as a
counterpart of Theorem \ref{tree} although the $L^2$-summability
over the path is replaced by the stronger $L^1$-condition. Here is
one of its corollaries.
\begin{corollaryy}[{=Corollary \ref{c01}}]\label{c011}
Let $V$ and $f$ be as above. Then
\[
\int\limits_\br \frac{\log
\sigma'_f(k^2)}{1+k^2}dk\ge-C_{1,f}\int_{\br^d} \frac{V(x)}{|x|^{d-1}+1}dx-C_{2,f}
\]
\end{corollaryy}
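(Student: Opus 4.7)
The plan is to take logarithms on both sides of (\ref{e1002}), apply Jensen's inequality twice (once to pull $\log$ past the outer $x^0$-integral and once past the expectation), and thereby reduce the estimate to a Green's function bound for the It\^o diffusion $\{X_t\}$.

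\textbf{Step 1.} Taking $\log$ of (\ref{e1002}), normalizing the measure $f(x^0)dx^0/\|f\|_{L^1}$, and using concavity of $\log$, I push the logarithm inside the $x^0$-integral. Inside each expectation I apply Jensen once more, $\log\mathbb{E}[e^Y]\ge\mathbb{E}[Y]$; since $V\ge 0$, Fubini--Tonelli exchanges the time integral with the expectation, so
\begin{equation*}
\frac{1}{2\pi}\int_\br \frac{\log\sigma'_f(k^2)}{1+k^2}dk \ge \log(C_f\|f\|_{L^1}) - \frac{1}{2\|f\|_{L^1}}\int_{\br^d} V(x)\left(\int_{\br^d} f(x^0)G(x^0,x)\,dx^0\right)dx,
\end{equation*}
where $G(x^0,x)=\int_0^\infty p_\tau(x^0,x)\,d\tau$ is the mean occupation (Green's) density of the diffusion.

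\textbf{Step 2.} The crucial ingredient is the Green's function bound
\begin{equation*}
\int_{\br^d} f(x^0)G(x^0,x)\,dx^0 \le \frac{C'_f}{|x|^{d-1}+1}, \qquad x\in\br^d,
\end{equation*}
with a constant depending only on $f$. This is where the specific drift in \eqref{stochastic} enters: it is tuned precisely so that the diffusion is transient outward with mean occupation density decaying like $|x|^{-(d-1)}$, matching the weight in Simon's conjecture (\ref{e10001}). For $x$ away from $\mathrm{supp}(f)$ this is a pointwise estimate on $G(x^0,\cdot)$; near $\mathrm{supp}(f)$ the local integrability of $G$ against the bounded compactly supported $f$ is routine. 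Plugging this into Step 1 and multiplying by $2\pi$ yields the corollary with $C_{1,f}=\pi C'_f/\|f\|_{L^1}$ and $C_{2,f}=-2\pi\log(C_f\|f\|_{L^1})$.

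The main obstacle is the Green's function bound in Step 2. Standard Brownian motion on $\br^d$ has Green's function decaying only like $|x|^{2-d}$, so the extra factor of $|x|^{-1}$ must be generated by the drift $p$ driving mass outward at a carefully calibrated rate. Establishing this requires a detailed analysis of the transition density of the stationary diffusion \eqref{stochastic} --- which is presumably the motivation for the modified potential theory, capacity, and harmonic measure developed later in the paper, and where the real work of the argument lies.
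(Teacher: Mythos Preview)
Your approach is correct and is exactly the paper's: the corollary is stated as a simple consequence of Theorem~\ref{th2}, Jensen's inequality, and the transition probability estimate~(\ref{tr-pr}). Your Step~1 is the Jensen part, and your Step~2 is the transition probability part.

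Where you go astray is in assessing Step~2. The Green's function bound is \emph{not} the obstacle you make it out to be, and it has nothing to do with the modified capacity and harmonic measure of Section~\ref{s3}. The transition density of $\{X_t\}$ is computed explicitly in Section~\ref{s1}: with $F(x)=|x|^{-\nu}I_\nu(|x|)$ one has
\[
p(x^0,x,t)=\frac{F(x)}{F(x^0)}\,\hat p(x^0,x,t),\qquad \hat p(x^0,x,t)=\frac{1}{(2\pi t)^{d/2}}e^{-|x-x^0|^2/(2t)-t/2}.
\]
Integrating in $t$ gives $G(x^0,x)=2\,\dfrac{F(x)}{F(x^0)}\,G_0(x^0,x;i)$, where $G_0(\cdot,\cdot;i)$ is the kernel of $(-\Delta+1)^{-1}$. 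The asymptotics $I_\nu(r)\sim (2\pi r)^{-1/2}e^{r}$ and $G_0(x^0,x;i)\sim C_d\,|x-x^0|^{-(d-1)/2}e^{-|x-x^0|}$ (see (\ref{vtor})) immediately give
\[
G(x^0,x)\lesssim \frac{1}{F(x^0)}\cdot\frac{e^{|x|-|x-x^0|}}{|x|^{(d-1)/2}|x-x^0|^{(d-1)/2}}\lesssim \frac{C_{x^0}}{|x|^{d-1}}
\]
for large $|x|$, uniformly for $x^0$ in the compact support of $f$; the local integrability near $\mathrm{supp}(f)$ is indeed routine. This is precisely what the paper means by ``an estimate for the transition probability (\ref{tr-pr})''. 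The potential theory of Section~\ref{s3} is developed for a different purpose, namely to analyze the stochastic integral when $V$ is supported on an obstacle set $E$ rather than to prove this corollary.
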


It is appropriate to compare this to results of Laptev-Naboko-Safronov \cite{lns1, lns2}.
 For a potential $V$, we write $V_\pm=\max(\pm V, 0)$ so that $V=V_+-V_-, |V|=V_++V_-$.
 Let $\Delta_D$ be the Laplace operator on $\mathbb{R}^d\backslash B(0,1)$ with the
 Dirichlet boundary condition on the unit sphere.
 Take $H=-\Delta_D+V$.
 We have
\begin{theorem}[{\cite{lns1}}]\label{lns} Let $V\in \ell^\infty(\bz^d, L^q(\Pi_1)), q>d/2,$ and
$$
\int_{\br^d} V_-^{(d+1)/2}(x) dx<\infty, \quad \int_{\br^d} \frac{V_+(x)}{|x|^{d-1}+1} dx<\infty
$$
Then
$$
\int_{\br_+} \frac{\log \sigma'_f(E)}{(1+E^{3/2})E^{1/2}} dE>-\infty
$$
The function $f \in L^2(\br^d)$ is bounded, spherically symmetric
and has compact support, $\Pi_1=[0,1]^d\subset \br^d$.
\end{theorem}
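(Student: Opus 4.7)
The statement is quoted verbatim from \cite{lns1}, so any proof in this manuscript would be a reference rather than an independent argument. Nevertheless, to sketch how one might attack it from scratch, my plan would be to combine regularized perturbation determinants with a Krein-type trace identity, in the spirit of the Killip-Simon sum-rule framework adapted to higher dimension.

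For $\operatorname{Im} k > 0$, I would form the $n$-th order perturbation determinant $D(k^2) = \det_n\bigl(I + |V|^{1/2}(-\Delta_D-k^2)^{-1}V^{1/2}\operatorname{sgn} V\bigr)$ with $n = \lceil (d+1)/2 \rceil$, so that the underlying Birman-Schwinger operator lies in the Schatten class $\mathcal{S}_n$ under the local hypothesis $V \in \ell^\infty(\bz^d, L^q)$, $q > d/2$. Zeros of $D$ in $\{\operatorname{Im} k > 0\}$ encode the negative eigenvalues of $H$, and on the real axis $|D(E+i0)|^{-1}$ plays the role of (a scalar reduction of) the scattering matrix. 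The spherical symmetry of $f$ enters here to reduce to a scalar object, yielding an inequality of the form $\log \sigma'_f(E) \ge -2\log|D(E+i0)| - C(E)$ with $C$ of controlled growth.

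Next, pulling back via $E = k^2$ and applying a Poisson-Jensen representation in the upper $k$-half-plane based at $k=i$, the Poisson kernel $(1+k^2)^{-1}$ produces exactly the weight $\bigl((1+E^{3/2})E^{1/2}\bigr)^{-1}$ of the statement after the change of variables. The Jensen (Blaschke) contribution of the zeros then collapses to the sum $\sum_j |\lambda_j(H)|^{1/2}$ over negative eigenvalues, which is finite by the Lieb-Thirring inequality at the borderline moment $s=1/2$; this is precisely what the hypothesis $V_- \in L^{(d+1)/2}$ buys.

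The hard part will be controlling the boundary integral of $\log|D(E+i0)|$. Expanding $\log\det_n$ in the traces $\operatorname{Tr}(K(z)^j)$ for $j \le n$, the $V_-$-contributions are again absorbed by Lieb-Thirring-type bounds, but the $V_+$-contributions resist: since $V_+$ is only required to satisfy $\int V_+/(|x|^{d-1}+1)\,dx < \infty$ and is not assumed to belong to any global $L^p$ class, direct Schatten-norm bounds are unavailable. My strategy would be to use the explicit kernel of $(-\Delta-k^2)^{-1}$, whose $|x-y|^{-(d-1)/2}$-type decay together with an averaging in the spectral variable ought to produce precisely the weight $(|x|^{d-1}+1)^{-1}$ required. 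Establishing this weighted trace estimate rigorously is where essentially all of the technical effort would sit.
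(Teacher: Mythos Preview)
You are correct that the paper gives no proof of this statement: it is quoted from \cite{lns1} solely for comparison with the authors' own Corollary~\ref{c01}, and the paper's ``proof'' consists entirely of the citation. There is therefore nothing in the manuscript to compare your sketch against.

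Your outline is a reasonable summary of the Laptev--Naboko--Safronov strategy (regularized determinants, Poisson--Jensen in the $k$-upper half-plane, Lieb--Thirring at moment $1/2$ for the Blaschke sum, and a weighted trace bound to absorb $V_+$). The one point I would flag is the last step: in \cite{lns1} the control of the $V_+$ contribution is not obtained by a direct weighted trace estimate on $\operatorname{Tr}(K(z)^j)$ as you suggest, but rather through a monotonicity/variational comparison that exploits $V_+\ge 0$ together with a spherical-average reduction made possible by the radial symmetry of $f$ and of $-\Delta_D$. Your proposed route---extracting the weight $(|x|^{d-1}+1)^{-1}$ from the free resolvent kernel after spectral averaging---is plausible in spirit but would require a genuinely new estimate; if you intend to write this up independently, that is the place to invest the work.
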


The discovery of deep and fruitful relations between the Brownian motion and
 Schr\"odinger operator probably goes back as far as to Wiener \cite{wi}, Feynman
 \cite{fey} and  Kac \cite{ka1, ka2}. Chung-Zhao \cite{chzh} use  these ideas to
 make connections with gauge theory, gaugeability, the properties of the discrete
 spectrum and eigenfunctions of the Schr\"odinger operator. Aizenman-Simon
 \cite{ais} apply this technique to study the properties of the
 evolution semigroup of $H_V$; see Simon \cite{si2} for an overview of the topic.
 The new aspects of our approach are that, first, we work with an ``appropriately
 modified Brownian motion" (= the It\^o diffusion), and not its ``classical" version.
 Second, this allows us to bypass several steps of computations and to get rather
 directly to the spectral measure of $H_V$ and, especially, its a.c. component.

Further analysis requires a good understanding of stochastic
integrals appearing in the right hand side of \eqref{e1002}.  To simplify the picture, we then assume that $V=0$ on a
domain $\Omega\subset \br^d$ and $V$ is of arbitrary size on
$E=\Omega^c$. The second part of the article deals with stochastic
integrals \eqref{e1002} from the point of view of the potential
theory induced by $\{X_t\}$. To give an idea of the results obtained
in this direction, we give some definitions for $d=3$;
Section~\ref{ss31} contains more details. Consider operators
$$
L^\pm=\frac12 \Delta\mp\pt_{x_1}
$$
We say that a function $u$ is $L^\pm$-harmonic on $\Omega$, if $L^\pm u=0$. Since $L^\pm$ are second order elliptic operators, one can show that $L^\pm$-harmonic functions possess many usual properties of harmonic functions (i.e., the max/min principle, Harnack principle etc., see Landis \cite{landis}).

We want to build the potential theory for these operators. Modulo some technical aspects, the construction follows the lines of the classical case related to $L=\Delta$; see, for instance, Landkof \cite{lan}, Hayman-Kennedy \cite{hk} and Garnett-Marshall \cite{GM}. So, let $z=(z_1,z'), \xi=(\xi_1,\xi')\in\br^3$ with $z,\xi'\in\br^2$.
We introduce the potentials
\begin{equation*}
K^\pm(z,\xi)=2G_0(z,\xi) e^{\pm(z_1-\xi_1)}
\end{equation*}
where $G_0$ is Green's function for $(-\Delta+1)^{-1}$ on
$\mathbb{R}^3$, that is
\[
G_0(z,\xi)=G_0(z,\xi; i)=\frac 1{4\pi}\frac{e^{-|z-\xi|}}{|z-\xi|}
\]
Let $E$ be a compact subset of $\br^3$ and $\cp(E)$ be the set of probability Borel measures on $E$. For $\mu\in\cp(E)$, set
\[
U_{\mu}^\pm(z)=\int_E K^\pm(z,\xi)d\mu(\xi)
\]
to be the corresponding potential. Consider
\begin{equation*}
C^\pm(E)=\lt(\inf_{\mu\in \cp(E)} \sup_{z\in \mathbb{R}^3}
U_{\mu}^\pm(z)\rt)^{-1} \label{def1_}
\end{equation*}
It turns out that, by Theorem \ref{th_trii}, $C^\pm(E)$ are equal
and their common value is denoted by $C(E)$. We call $C(E)$ the
modified capacity of $E$. More involved results along with some
developments of this theory are in Section~\ref{s3}.

In Section \ref{s5}, we give applications of the introduced
techniques to the spectral theory of Schr\"odinger operator. For
example, we construct an obstacle $E$ with the  following
properties:
\begin{itemize}
\item $\mathbb{P}(X_t$ does not hit $E)>0$ and hence $\sigma_{ac}(H)=\br_+$.
\item any ray, issued from the origin, intersects $E$ infinitely many times.
\end{itemize}
It is interesting to compare this example to the result by
Amrein-Pearson \cite{pears}, where the authors show that
$\sigma_{ac}(H)=\br_+$ if there is a sufficiently thick
obstacle--free cone. Our results suggest that the phenomenon of the
preservation of the a.c. spectrum is much finer and is of  a
capacitary nature. In particular, we present an example where the
above ``cone condition" is not satisfied but the a.c. spectrum is
nevertheless preserved.  The Appendices~A and B contain some bounds
on modified harmonic measure with respect to the operators $L^\pm$
introduced above.

We conclude the introduction with a few words on the notations.
We write $B(x,r)=\{y :|y-x|<r\}, r>0$, for an open ball in $\br^d$, and $int(A)$ is
the interior of an $A\subset \br^d$.
$\Sigma_r$ is a sphere of radius $r>0$ in $\br^d$. For a domain
$\Omega$ in $\br^d$, $L^p(\Omega)$ is the usual Lebesgue space and
$$
W^{1,2}(\Omega)=\{f: \int_\Omega (|f|^2+|\nabla f|^2) dx<\infty\}
$$
the derivatives being understood in the distributional sense. We will write that
\[
f(x)\approx g(x), \quad x\to x_0
\]
if
\[
\lim\limits_{x\to x_0} \frac{f(x)}{g(x)}=1
\]
and
\[
f(x)\sim g(x), \quad {\rm for} \quad x\in I
\]
if
\[
C_1<\frac{f(x)}{g(x)}<C_2, \quad x\in I
\]
with some $C_1, C_2>0$. We also write $f(x)\lesssim g(x)$ on $I$ if $f(x)<Cg(x)$ for $x\in I$ with some $C>0$.
The
probabilistic notations are largely borrowed from \O ksendal
\cite{Oks}, see also Karatzas-Shreve \cite{kara}.

\section{A stochastic differential equation}\label{s1}

The following
stochastic differential equation will play an important role later
on. Consider the Lipschitz vector field
\[
 p(x)=\left(\frac{I'_\nu(|x|)}{I_\nu(|x|)}-\nu |x|^{-1}\right)\cdot
\frac{x}{|x|}, \quad \nu=(d-2)/2
\]
where $I_\nu$ denotes the modified Bessel function \cite[Sect.
9.6]{as}. The asymptotics of $I_\nu, I'_\nu$ at zero and at infinity
are given by Abramowitz-Stegun \cite{as}, formulae (9.6.10),(9.7.1),
and (9.7.3). They yield
\begin{eqnarray}
p(x)&=&\left(1-\left(\nu+\frac 12\right)
|x|^{-1}+\underline{O}(|x|^{-2})\right)\cdot x|x|^{-1}, \quad |x|\to
\infty \label{e1}\\
p(x)&=&\left(\frac{|x|}{2(\nu+1)}+\underline{O}(|x|^3)\right)\cdot
x|x|^{-1}, \quad |x|\to 0 \label{e2}
\end{eqnarray}
Then, fix any point $x^0\in\mathbb{R}^d$ and consider the following
stochastic process
\begin{equation}
dX_t=p(X_t)dt+dB_t, \quad X_0=x^0 \label{stochastic}
\end{equation}
with the drift given by $p$. The solution to this diffusion
process exists and all trajectories are continuous almost surely.
Its generating operator \cite[Sect. 7.3]{Oks} is given by
\begin{equation}\label{e4}
A=\frac 12\Delta + p\cdot\nabla
\end{equation}

We need to understand better the properties of the trajectories $X_t$.
Consider the radial component $Z_t=|X_t|$.  By
Ito's formula \cite[Sect. 4.2]{Oks}, we have
\[
dZ_t=d\tilde{B}_t+\left(
\frac{1}{2Z_t}+\frac{I_\nu'(Z_t)}{I_\nu(Z_t)} \right)dt
\]
where $\tilde B_t$ is a one-dimensional Brownian motion
\cite[Sect. 8.4]{Oks}. The generating operator for $Z_t$ \cite[Sect. 7.3]{Oks} is given by
\[
D=\frac 12\frac{d^2}{dx^2}+\alpha(x)\frac{d}{dx}
\]
where
\[
\alpha(x)=\frac{1}{2x}+\frac{I_\nu'(x)}{I_\nu(x)}
\]
From  \eqref{e1}, we get $\alpha(x)=1+\underline{O}(x^{-2})$ for $x\to\infty$
so all paths of $Z_t$ go to infinity almost surely.
Notice that
$$
p(x)=\nabla \log \left(|x|^{-\nu}I_\nu(|x|)\right)
$$
and consider $Q=|p|^2+\mathrm{div}\, p$. Recall that (\cite{as},
formula (9.6.1)),
\[
r^2I_\nu''(r)+rI_\nu'(r)=(r^2+\nu^2)I_\nu(r)
\]
Since
\[
\Delta f(|x|)=\frac{1}{r^{d-1}}\partial_r\left(r^{d-1}\partial_r
f\right)
\]
with $r=|x|$, we have
$$
\Delta(|x|^{-\nu}I_{\nu}(|x|))=|x|^{-\nu}I_{\nu}(|x|)
$$
An easy computation shows that $Q=1$ on $\mathbb{R}^d$. Consider now a self-adjoint semigroup given by
\begin{equation}\label{e3}
\psi_t=\frac 12 \Delta\psi-\frac Q2\psi
\end{equation}
Its transition probability is
\[
\hat{p}(x,y,t)=\frac{1}{(2\pi t)^{d/2}}e^{-\frac{|x-y|^2}{2t}-\frac
t2}
\]
On the other hand, let $F(x)=|x|^{-\nu} I_\nu(|x|)$ and $\psi=F\,\phi$. Notice that $\psi$ satisfies \eqref{e3} iff
\begin{equation}\label{e8}
\phi_t=\frac 12\Delta \phi+ p\cdot \nabla\phi
\end{equation}
The operator appearing in the right hand side of the above equality
is precisely $A$ from \eqref{e4}. So, due to the connection between
the diffusion and the processes with killing \cite[Ch. 8]{Oks},
Exercise 8.16, we have
\[
p(x,y,t)=\frac{|y|^{-\nu}I_\nu(|y|)}{|x|^{-\nu}I_\nu(|x|)}\, \hat{p}(x,y,t)
\]
where $p(x,y,t)$ is the transition probability for $X_t$. Once
again, the asymptotics (9.7.1) from \cite{as} implies
\begin{equation}
p(x,y,t)\sim \frac{1}{(2\pi t)^{d/2}} \left( \frac{|x|}{|y|}
\right)^{(d-1)/2}\exp\left(-\frac{|x-y|^2}{2t}- \frac t2
+|y|-|x|\right) \label{tr-pr}
\end{equation}
as $|x|,|y|$ are large.

\section{From It\^ o calculus to spectral properties of a Schr\"odinger operator}\label{s2}
\subsection{Feynman-Kac type formulae}\label{ss21} We start this subsection by introducing some notations. Let $H_V$ be
a Schr\"odinger operator \eqref{SO} and  assume the potential $V$ is
nonnegative, continuous, and has a compact support.   The Green's
function  $G=G_V$ of the operator $H_V$ is defined by
\begin{equation}\label{e82}
 ((H-k^2)^{-1}f)(x)=\int_{\br^d} G_V(x,y;k)f(y)\, dy
\end{equation}
where $f\in L^2(\br^d)$, $k\in\bc_+$, and $x,y\in\br^d$. Recall that for $V=0$,
\[
G_0(x,y;k)=C_d' (-ik)^\nu\frac{ K_\nu (-ik|x-y|)}{|x-y|^\nu}
\]
where $\nu=(d-2)/2$, $C_d'=1/(2\pi)$ for $d=2$,  and
$C_d'=1/(4(2\pi)^{d-2})$ for $d\geq 3$. The asymptotics yields
$$
G_0(x,y;k)\approx \tilde C_d\left\{
\begin{array}{ll}
-\log |x-y|, & d=2,\\
|x-y|^{-(d-2)}, & d\geq 3,
\end{array}
\right. \quad x\to y
$$
where $\ti C_d=1/(2\pi)$ for $d=2$, $\ti C_d=\Gamma(d/2)/(2\pi^{d/2} (d-2))$ for $d\geq 3$. We also have
\begin{equation}
G_0(x,y;k)\approx C_d (-ik)^{\nu-1/2}\frac{e^{ik|x-y|}}{|x-y|^{(d-1)/2}},
\quad |x|\to\infty\label{vtor}
\end{equation}

Since $V$  is compactly supported, we have the following relations (e.g., Denisov \cite{d3}):
\begin{itemize}
\item For a fixed $x^0$, define the amplitude $a_{x^0}$ as
\begin{equation}\label{as}
G(x,x^0;k) \approx  C_d
(-ik)^{\nu-1/2}\frac{e^{ik|x|}}{|x|^{(d-1)/2}}\, a_{x^0}(\theta, k),
\end{equation}
where $\theta=x/|x|\in\Sigma_1$ and $|x|\to \infty$, the constant
$C_d$ is from (\ref{vtor}).
\item
Furthermore,
\begin{equation}\label{e84}
G(x,x^0;k) \approx C_d
(-ik)^{\nu-1/2}\frac{e^{ik|x-x^0|}}{|x-x^0|^{(d-1)/2}}\,
\beta_{x^0}(\theta, k),
\end{equation}
where $k\in \mathbb{C}^+$ and $|x|\to\infty$.
\item
We have $|x|=\la x^0, \theta\ra +|x-x^0|+\bar{o}(1)$ as $|x|\to\infty$ and
 consequently
\begin{equation}\label{e85}
 a_{x^0}(\theta, i)=\beta_{x^0}(\theta,i)e^{-ik\la x^0,\theta\ra}
\end{equation}
\item Consider the function $u(.,k)=(H-k^2)^{-1}f$, where $f\in L^2(\mathbb{R}^d)$ and has a compact support.
The function $u$ has the following asymptotics
\[
u(x,k)\approx C_d
(-ik)^{\nu-1/2}\frac{e^{ik|x|}}{|x|^{(d-1)/2}}A_{f}(\theta,k)
\]
as $|x|\to\infty$, $\theta=x/|x|$. For $A_f$, we have
\begin{eqnarray}\label{ba}
A_f(\theta,k)&=&\int e^{-ik\langle x^0,\theta\rangle}\beta_{x^0}(\theta,k)f(x^0)\, dx^0 \\
&=&\int a_{x^0}(\theta,k) f(x^0)\, dx^0 \nonumber
\end{eqnarray}
\end{itemize}

The next result gives probabilistic interpretation for the average of the amplitude
$a_{x^0}=a_{x^0}(. ,i)$ over the unit sphere $\Sigma_1$. We denote by $\{e^k\}_k$ be the standard basis in $\br^d$ and, for $x\in\br^d$, we write
$x=\sum_k x_ke^k=x_1e^1+x'$; so $x=(x_1,x')$.

\begin{theorem}(Feynman--Kac type formula)\label{th1}
\begin{enumerate}
\item
Let $X_t$ be the solution to (\ref{stochastic}). Then we have
\begin{equation}
\int\limits_{\Sigma_1} a_{x^0}(\theta)d\theta=C_1\mathbb{E}_{x^0}
\left[ \exp\left(-\frac12 \int\limits_0^\infty
V(X_\tau)d\tau\right)\right] \label{feynman-kac}
\end{equation}
where the subscript $x^0$ means that the process starts at
$x^0$.
\item
For $\theta\in\Sigma_1$, let $dG_t=\theta dt+dB_t$. Then
\begin{equation}
a_{x^0}(\theta)=C_2\mathbb{E}_{x^0} \left[ \exp\left(-\frac12
\int\limits_0^\infty V(G_\tau)d\tau\right)\right]\label{fk}
\end{equation}
\end{enumerate}
\end{theorem}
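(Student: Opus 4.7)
The plan is to identify both sides of (\ref{feynman-kac}) and (\ref{fk}) with the value at $x^0$ of an auxiliary function $w$ that solves $(-\Delta+V+1)w=0$, and then to read off the scattering amplitude from the large-$|x|$ behavior of $w$ via Green's identity. For part (1), introduce the gauge
\[
g(x):=\mathbb{E}_x\!\left[\exp\!\left(-\tfrac12\int_0^\infty V(X_\tau)\,d\tau\right)\right].
\]
Since $V\ge 0$ is compactly supported and $|X_t|\to\infty$ almost surely, one has $0\le g\le 1$; the Markov property together with It\^o/Dynkin forces $g$ to solve $Ag=\tfrac{V}{2}g$ weakly, hence smoothly by elliptic regularity. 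The transition estimate (\ref{tr-pr}) implies that, starting from $x$, the probability of reaching $\mathrm{supp}\,V$ decays (exponentially) as $|x|\to\infty$, so $g(x)\to 1$ at infinity.

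Next, put $w:=Fg$ with $F(x)=|x|^{-\nu}I_\nu(|x|)$. The conjugation identity derived in Section~\ref{s1} after (\ref{e3}), namely $\tfrac12\Delta(F\phi)-\tfrac12 F\phi=F\cdot A\phi$, applied to $\phi=g$ gives $(-\Delta+V+1)w=0$ on $\br^d$, and $g\to 1$ together with the asymptotic $F(x)\approx e^{|x|}(2\pi|x|^{d-1})^{-1/2}$ yields the same growth for $w$. Setting $u(x):=G_V(x,x^0;i)$, so that $(-\Delta+V+1)u=\delta_{x^0}$, Green's formula on $B(0,R)\ni x^0$ gives
\[
F(x^0)g(x^0)=w(x^0)=\int_{\partial B(0,R)}\bigl(u\,\partial_n w-w\,\partial_n u\bigr)\,dS.
\]
By (\ref{as}) at $k=i$ we have $u(R\omega)\approx C_d R^{-(d-1)/2}e^{-R}a_{x^0}(\omega)$ with $\partial_n u\approx-u$, while $\partial_n w\approx w$; hence the integrand behaves like $2uw\approx (2C_d/\sqrt{2\pi})R^{-(d-1)}a_{x^0}(\omega)$, and multiplication by the surface element $R^{d-1}\,d\omega$ produces $(2C_d/\sqrt{2\pi})\int_{\Sigma_1}a_{x^0}(\omega)\,d\omega$, a quantity independent of $R$. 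Letting $R\to\infty$ proves (\ref{feynman-kac}) with the explicit value $C_1=\sqrt{2\pi}\,F(x^0)/(2C_d)$.

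Part (2) is proved the same way with $F$ replaced by the plane wave $F_\theta(x):=e^{\la\theta,x\ra}$: since $|\theta|=1$, $(-\Delta+1)F_\theta=0$ and $\nabla\log F_\theta=\theta$, so the associated drifted diffusion is exactly the process $G_t$ of the statement. Set $g_\theta(x):=\mathbb{E}_x[\exp(-\tfrac12\int_0^\infty V(G_\tau)\,d\tau)]$ and $w_\theta:=F_\theta g_\theta$; then $(-\Delta+V+1)w_\theta=0$ and Green's identity again expresses $F_\theta(x^0)g_\theta(x^0)$ as a boundary integral. The new feature is that $F_\theta(R\omega)=e^{R\la\theta,\omega\ra}$ is not radial: combined with the $e^{-R}$ from $u$, the integrand on $\partial B(0,R)$ carries the kernel $e^{R(\la\theta,\omega\ra-1)}$, which is sharply concentrated near $\omega=\theta$. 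A Laplace expansion on $\Sigma_1$ at the unique maximum $\omega=\theta$ (where $\la\theta,\omega\ra-1\approx-\tfrac12|\omega-\theta|^2$) extracts the $R\to\infty$ limit as a constant multiple of $a_{x^0}(\theta)$, which is (\ref{fk}).

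The two main technical hurdles are (a) justifying $Ag=\tfrac{V}{2}g$ rigorously together with a quantitative rate for $g(x)\to 1$, sharp enough to commute $R\to\infty$ with Green's formula (the radial asymptotics of $A$ for large $r$ show $1-g\sim e^{-2r}/r^{d-1}$, which is more than enough); and (b) in part (2), the Laplace expansion on $\Sigma_1$, for which one must additionally verify that $g_\theta(R\omega)\to 1$ uniformly in a neighborhood of $\omega=\theta$---this is clear because $G_t$ drifts outward along $\theta$ and so avoids $\mathrm{supp}\,V$ with probability tending to one.
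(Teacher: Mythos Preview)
Your proposal is correct and takes a genuinely different route from the paper's proof.

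The paper does not construct the gauge $g$ directly. Instead it fixes a ball $B(0,R)$ and solves the inhomogeneous Dirichlet problem $(-\Delta+V+1)\psi=f$, $\psi|_{\Sigma_R}=0$, with $f$ the indicator of a thin spherical shell at radius $r\gg\rho$. Then $\psi(x^0)$ is written two ways: as $\int G^{(R)}(x^0,y)f(y)\,dy$, and (after the substitution $\psi=F\phi$) via an off-the-shelf bounded-domain Feynman--Kac formula with a source term, quoted from \O ksendal. A double limit $R\to\infty$, $r\to\infty$ is then taken; the second is handled by the strong Markov property at the hitting time $T_{r/2}$, which collapses the source-term Feynman--Kac to your gauge expectation. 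Part (2) is similar, with $f$ supported in a thin slab $\{r<|x|<r+1,\ |x'|<\sqrt r\}$ and a stopping time at the plane $x_1=r^{\epsilon}$.

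Your argument instead builds $g$ first, shows $Ag=\tfrac12Vg$ via Dynkin/Markov plus elliptic regularity, and then applies Green's formula between $w=Fg$ and $G_V(\cdot,x^0;i)$. This is more direct and yields the constants explicitly---in particular it makes visible the $x^0$-dependence $C_1=C_1(x^0)$ through $F(x^0)$, which the paper's statement suppresses but its proof also produces. The price is that you must justify the PDE for $g$ and enough decay for $1-g$ and its gradient to pass to the limit in Green's identity; the paper avoids this by citing the bounded-domain Feynman--Kac representation, at the cost of the two-parameter limit with stopping times. For part (2) your Laplace expansion on $\Sigma_1$ is the continuous analogue of the paper's thin-slab test function: both exploit the concentration of $e^{R(\langle\theta,\omega\rangle-1)}$ near $\omega=\theta$, and both require continuity of $a_{x^0}(\cdot)$ and the uniformity of $g_\theta(R\omega)\to 1$ that you flag.
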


\begin{proof}
Let $V$ have support inside the ball $\{|x|<\rho\}$. Take $r$ and
$R$ such that $\rho\ll r\ll R$ and consider the solution to the
following problem
\begin{equation}
-\Delta \psi+V\psi=-\psi+f,\quad \psi|_{\Sigma_R}=0\label{eqeq}
\end{equation}
where $f$ is characteristic function of the spherical layer $\{r<|x|<r+1\}$.
Then, we have two expressions for $\psi(x^0)$. On the one hand,
\[
\psi(x^0)=\int\limits_{|x|<R}G^{(R)}(x^0,y)f(y)dy
\]
where $G^{(R)}(x,y)$ is the Green's function for the Schr\"odinger
operator in $\{|x|<R\}$ with Dirichlet boundary condition. On the
other hand, we substitute
\begin{equation}
\psi=F\, \phi  \label{eqz}
\end{equation}
where, just as before, $F(x)=|x|^{-\nu}I_{\nu}(|x|)$. The
asymptotics of $I_\nu$ near zero is discussed in the beginning of
Section \ref{s1}. It implies that $F$ is infinitely smooth on
$\br^d$ and $F \ge C>0$.  So, recalling \eqref{e8}
\[
\frac 12 \Delta\phi+ p\nabla \phi-\frac 12 V\phi =-\frac 12 f F^{-1}
\]
and $\phi=0$ on $ \Sigma_R$.  We have the following representation
\cite[Ch. 9]{Oks}, Exercise~9.12,
\[
\phi(x^0)=\mathbb{E}_{x^0}\left[\int\limits_0^{T_R} \exp\left(-\frac
12\int\limits_0^\tau V(X_s)ds\right)f_1(X_\tau)d\tau \right]
\]
where $f_1(x)=\frac 12 f F^{-1}$ and $T_R$ is the standard stopping time,
i.e., the random time when $X_t$ hits the boundary $\Sigma_R$ for the
first time. The path $X_t$ is the solution to stochastic
differential equation (\ref{stochastic}). Thus we have the identity
\begin{equation}
\frac{e^{ r}}{r^{(d-1)/2}}\int\limits_{r<|x|<r+1}
G^{(R)}(x^0,y)f(y)dy=\mathbb{E}_{x^0}\left[\int\limits_0^{T_R}
\exp\left(-\frac 12\int\limits_0^\tau
V(X_s)ds\right)f_2(X_\tau)d\tau \right] \label{id}
\end{equation}
where
\[
f_2(x)=f_1(x)\frac{e^{r}}{r^{(d-1)/2}}
\]
It is well known that $G^{(R)}(x,y)$ tends to $G(x,y)$ uniformly for
$x,y$ from a fixed compact as $R\to\infty$. Therefore, asymptotics
(\ref{as}) yields
\[
\lim\limits_{r\to\infty}\lim\limits_{R\to\infty} \Bigl({\rm LHS\ of\
(\ref{id})}\Bigr)=C\int\limits_{\Sigma_1}a_{x^0}(\theta)d\theta
\]
Since $T_R$ goes to infinity as $R\to\infty$, we have
\begin{eqnarray*}
\mathbb{E}_{x^0}\left[\int\limits_0^{T_R} \exp\left(-\frac
12\int\limits_0^\tau V(X_s)ds\right)f_2(X_\tau)d\tau
\right] \longrightarrow \hspace{0.3cm}\\\hspace{1cm}
\mathbb{E}_{x^0}\left[\int\limits_0^{\infty} \exp\left(-\frac
12\int\limits_0^\tau V(X_s)ds\right)f_2(X_\tau)d\tau \right]
\end{eqnarray*}
Now, let us compute the limit of the last expression as
$r\to\infty$. It makes sense to consider the stopping time
$T_{r/2}$: the time when the path $X_t$ hits the sphere
$\Sigma_{r/2}$ for the first time. Hence,
\[
\mathbb{E}_{x^0}\left[\int\limits_0^{\infty} \exp\left(-\frac
12\int\limits_0^\tau V(X_s)ds\right)f_2(X_\tau)d\tau \right]=
\]
\[
\mathbb{E}_{x^0}\left[ \exp\left(-\frac 12\int\limits_0^{T_{r/2}}
V(X_s)ds\right)\int\limits_{T_{r/2}}^\infty \exp\left(-\frac
12\int\limits_{T_{r/2}}^\tau V(X_s)ds\right)f_2(X_\tau)d\tau \right]
\]
By the strong Markov property \cite[Sect. 7.2]{Oks}, the last expectation equals to
\[
\mathbb{E}_{x^0}\left[ \exp\left(-\frac 12\int\limits_0^{T_{r/2}}
V(X_s)ds\right) \mathbb{E}_{X_{T_{r/2}}}\left[
\int\limits_{0}^{\infty} \exp\left(-\frac 12\int\limits_{0}^\tau
V(\tilde{X}_s)ds\right)f_2(\tilde{X}_\tau)d\tau \right]\right]
\]
The process $\tilde{X}_t$ is the solution to (\ref{stochastic}) where $x^0=X_{T_{r/2}}$.

As $r\to\infty$, the inner expectation tends to a constant
independent of $X_{T_{r/2}}$. Since $T_{r/2}$ goes to infinity almost
surely, the dominated convergence theorem yields
(\ref{feynman-kac}).

The proof of the second claim of the theorem is similar. Without loss of generality, we assume $\theta=e^1$.
Let $f$ in (\ref{eqeq}) be given
by
\[
f=\chi_{\{r<|x|<r+1\}}\cdot \chi_{\{|x'|<\sqrt r\}}
\]
We now take $F(x)=e^{x_1}$ in (\ref{eqz}). Then,  we have
\[
\frac 12\Delta\phi+\phi_{x_1}-\frac 12 V\phi=-\frac 12 fe^{-x_1}
\]
and
\[
\phi(x^0)=\mathbb{E}_{x^0}\left[\int\limits_0^{T_R} \exp\left(-\frac
12\int\limits_0^\tau V(G_s)ds\right)f_1(G_\tau)d\tau \right]
\]
where $f_1(x)=\frac 12 f(x)e^{-x_1}$. We again have
\begin{equation}\label{e81}
\int\limits_{r<|x|<r+1}
e^{r}G^{(R)}(x^0,y)f(y)dy=\mathbb{E}_{x^0}\left[\int\limits_0^{T_R}
\exp\left(-\frac 12\int\limits_0^\tau
V(G_s)ds\right)f_2(G_\tau)d\tau \right]
\end{equation}
and $f_2(x)=\frac 12 f(x)e^{r-x_1}$. Take $R\to\infty$ first and
then $r\to\infty$. Definition of $a_{x^0}$ \eqref{as} implies
that the left hand side of the above equality will converge to a
multiple of $ a_{x^0}(e^1)$. For the right hand side, we have
\[
\lim_{R\to\infty} ({\rm RHS \ of \  \eqref{e81}
})=\mathbb{E}_{x^0}\left[\int\limits_0^{\infty} \exp\left(-\frac
12\int\limits_0^\tau V(G_s)ds\right)f_2(G_\tau)d\tau \right]
\]
Let $\epsilon$ be small positive ($\epsilon<1/2$ is enough) and $t_{r^\epsilon}$ be the first time when $G_\tau$
hits the plane $x_1=r^\epsilon$. Then,
\[
\mathbb{E}_{x^0}\left[\int\limits_0^{\infty} \exp\left(-\frac
12\int\limits_0^\tau V(G_s)ds\right)f_2(G_\tau)d\tau \right]=
\]
\[
\mathbb{E}_{x^0}\left[ \exp\left(-\frac
12\int\limits_0^{t_{r^\epsilon}} V(G_s)ds\right)
\mathbb{E}_{G_{t_{r^\epsilon}}}\left[ \int\limits_{0}^{\infty}
\exp\left(-\frac 12\int\limits_{0}^\tau
V(\tilde{G}_s)ds\right)f_2(\tilde{G}_\tau)d\tau \right]\right]
\]
The process $\tilde{G}_t$ again solves the same equation $d\tilde{G}_t=e^1dt+dB_t$ but with initial value $\tilde{G}_0=G_{t_{r^\epsilon}}$.

Let $|G'_{t_{r^\epsilon}}|<r^\epsilon$. Then, the inner expectation
converges to some $G_{t_{r^\epsilon}}$--independent constant as
$r\to\infty$.  On the other hand,
$\mathbb{P}(|G'_{t_{r^\epsilon}}|>r^\epsilon)\to 0$,  so the
remaining factor from the right hand side of the above equality
tends to
\[
\mathbb{E}_{x^0} \left[ \exp\left(-\frac12 \int\limits_0^\infty
V(G_\tau)d\tau\right)\right]
\]
\end{proof}

\nt{\bf Remark.} Given any continuous nonnegative potential $V$,
define truncations $V^{(n)}(x)=V(x)\cdot \mu_n(x)$ where $\mu_n(x)$
is smooth, equals to one on $|x|<n$ and to $0$ on $|x|>n+1$, and
$0\leq \mu_n(x)\leq 1$ everywhere. For each $n$, the above theorem
applies. The monotonicity of Green's function in $V$ and the
monotone convergence theorem allow one to show that formula
(\ref{feynman-kac}) is true for all continuous nonnegative
potentials. The amplitude $a_{x^0}(\theta)$ is well-defined as the
$\lim_{n\to\infty}a^{(n)}_{x^0}(\theta)$ where $a^{(n)}_{x^0}$
corresponds to $V^{(n)}$. Of course, in this case both
expressions can be equal to $0$.

\subsection{Applications to the scattering theory of  Schr\"odinger operators}\label{ss22}
We apply the methods of the previous subsection to the study of the a.c. spectrum of a Schr\"odinger operator. Notice that various results of a similar flavor were recently obtained in \cite{d2, d3}.
\begin{theorem}\label{th2}
Let $V$ be any continuous nonnegative function. Assume that $f\in
L^2(\mathbb{R}^d)$ is nonnegative and has a compact support. Let
$\sigma_f$ be the spectral measure of $f$ with respect to $H_V$ and
$\sigma'_f$ be the density of its a.c. part. Then we have
\begin{equation}
\exp\left[\frac{1}{2\pi}\int_\br \frac{\log
\sigma'_f(k^2)}{1+k^2}dk\right]\ge C_f\int f(x^0)\mathbb{E}_{x^0}
\left[ \exp\left(-\frac 12\int\limits_0^\infty V(X_\tau)
 d\tau\right)\right]dx^0 \label{th21}
\end{equation}
where the constant $C_f>0$ does not depend on $V$.
\end{theorem}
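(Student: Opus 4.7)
The plan is to pass through scattering theory: express $\sigma'_f(k^2)$ in terms of the scattering amplitudes $A_f(\theta,k)$ from (\ref{ba}), then apply a subharmonic mean value inequality to the analytic function $G(k)=\int_{\Sigma_1}A_f(\theta,k)\,d\theta$ on $\bc^+$, and finally evaluate $G$ at $k=i$ using Theorem~\ref{th1}. It suffices to treat the case of compactly supported $V$: the general case follows by the truncation-monotonicity argument of the remark after Theorem~\ref{th1}, together with monotone convergence on both sides of (\ref{th21}).

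The first step combines the asymptotic expansion (\ref{ba}) with Stone's formula and a standard flux computation for the outgoing solution $u(x,k)=(H-k^2-i0)^{-1}f$ to obtain
\[
\sigma'_f(k^2)=c_d\,k^{d-2}\int_{\Sigma_1}|A_f(\theta,k)|^2\,d\theta,\qquad k>0,
\]
with a positive dimensional constant $c_d$. Since $V\ge 0$ is bounded with compact support, $G_V(\cdot,\cdot;k)$ is analytic in $k\in\bc^+$, and so are $a_{x^0}(\theta,k)$, $A_f(\theta,k)$ and
\[
G(k)=\int_{\Sigma_1}A_f(\theta,k)\,d\theta.
\]
The Cauchy--Schwarz inequality gives, for $k>0$,
\[
|G(k)|^2\le |\Sigma_1|\int_{\Sigma_1}|A_f(\theta,k)|^2\,d\theta=\frac{|\Sigma_1|}{c_d\,k^{d-2}}\,\sigma'_f(k^2).
\]

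The analytic core is the subharmonic mean value inequality at $k=i$: the function $\log|G|$ is subharmonic on $\bc^+$, and once the required harmonic majorant is verified, the Poisson kernel $P_i(k)=1/(\pi(1+k^2))$ produces
\[
\log|G(i)|\le\frac{1}{\pi}\int_\br\frac{\log|G(k)|}{1+k^2}\,dk.
\]
Inserting the Cauchy--Schwarz bound and using the identity $\int_\br\log|k|/(1+k^2)\,dk=0$ (the substitution $k\mapsto 1/k$ sends the integral to its negative) yields
\[
\log|G(i)|\le\frac{1}{2\pi}\int_\br\frac{\log\sigma'_f(k^2)}{1+k^2}\,dk+C
\]
with $C$ depending only on $d$ and on $\operatorname{supp} f$. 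Finally, Fubini and Theorem~\ref{th1}(1) evaluate
\[
G(i)=\int f(x^0)\int_{\Sigma_1}a_{x^0}(\theta)\,d\theta\,dx^0=C_1\int f(x^0)\,\mathbb{E}_{x^0}\!\left[\exp\!\left(-\tfrac12\int_0^\infty V(X_\tau)\,d\tau\right)\right]dx^0,
\]
which is nonnegative, so $|G(i)|=G(i)$. Exponentiating gives (\ref{th21}) with $C_f=C_1 e^{-C}>0$ independent of $V$; if $G(i)=0$ the inequality is trivial.

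The main obstacle is the justification of the subharmonic mean value inequality: one must control $G(k)$ as $k\to\infty$ in $\bc^+$, near $k=0$, and along $\br$, to ensure $\log^+|G(k)|/(1+k^2)\in L^1(\br)$ and that $\log|G|$ admits a well-behaved harmonic majorant on $\bc^+$. Standard limiting absorption and resolvent estimates for Schr\"odinger operators with bounded, compactly supported potentials supply the needed bounds on $A_f(\theta,k)$ and hence on $G(k)$. With these in hand, the truncation-monotone convergence argument transfers the bound (\ref{th21}) from compactly supported $V$ to all continuous nonnegative $V$, completing the proof.
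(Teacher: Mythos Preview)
Your approach is essentially the paper's: the scattering identity $\sigma'_f(k^2)=c_d|k|^{d-2}\|A_f(\cdot,k)\|_{L^2(\Sigma_1)}^2$, a subharmonic mean value inequality in $\mathbb{C}^+$, evaluation at $k=i$ via Theorem~\ref{th1}, and truncation for general $V$. The one cosmetic difference is that you apply subharmonicity to the scalar $\log|G|$ with $G(k)=\int_{\Sigma_1}A_f(\theta,k)\,d\theta$ and then Cauchy--Schwarz on the real line, whereas the paper applies it directly to $g(k)=\log\|A_f(\cdot,k)e^{ikC_f}\|_{L^2(\Sigma_1)}$ (the log of the norm of a Hilbert-space-valued analytic function is subharmonic) and only at the end uses $\|A_f(\cdot,i)\|_{L^2}\gtrsim\int_{\Sigma_1} A_f(\theta,i)\,d\theta$. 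Both routes are valid and give the same constant structure.

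Two points need sharpening. First, the Poisson inequality you write is not valid for $G$ as it stands: by (\ref{j2}) the amplitude $A_f$ (hence $G$) grows like $e^{C_f\Im k}$ as $\Im k\to\infty$, so one must work with $G(k)e^{ikC_f}$, which is bounded in $\mathbb{C}^+$ and has the same modulus on $\mathbb{R}$, at the cost of an extra additive $C_f$ on the right. You do absorb this into your constant $C$, but the correction should be made explicit; this is exactly why the paper carries the factor $e^{ikC_f}$ and runs the argument on the exhausting domains $\Omega_{\epsilon,L,M}$ using (\ref{j1})--(\ref{j3}). Second, ``monotone convergence on both sides'' is incorrect for the left-hand side: the densities $(\sigma_f^{(n)})'$ have no monotonicity in $n$. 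What holds is $\sigma_f^{(n)}\to\sigma_f$ weak-$*$, and the paper invokes upper semicontinuity of the entropy $\int\log\sigma'\,d\omega$ under weak-$*$ limits (citing Killip--Simon \cite{ks}) to pass to the limit on the left; monotone convergence handles only the right-hand side.
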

\begin{proof}
Suppose that  $V$ has a compact support.
Recall definitions \eqref{as}-\eqref{ba} from the beginning of this section.
The function $\beta_{x^0}(\theta, k)$  is analytic in $k\in
\mathbb{C}^+$ and has the the following asymptotics for large $\Im k$
\[
\beta_{x^0}(\theta,k)=1+\underline{O}(\Im k)^{-1}
\]
This follows from the analysis of the perturbation series
for the resolvent, i.e.
\[
G(x,y,k)=G_0(x,y,k)-\int G_0(x,s,k)V(s)G_0(s,y,k)ds+\ldots
\]
Notice also that the function $A_{f}(\theta,k)$, as a function in $k$,
is analytic on $\mathbb{C}^+$ and is continuous up to
$\mathbb{R}\backslash \{0\}$. The last property follows from the
limiting absorption principle, see Agmon  \cite{ag}. The representation
\begin{equation}
 u=(-\Delta-k^2)^{-1}(f-Vu) \label{rep}
\end{equation}
implies
\begin{equation}
|A_{f}(\theta,k)|\leq \frac{C_{f,V}}{{\rm dist}(k^2,\mathbb{R}_+)^{1/2}} \label{j1}
\end{equation}
for $k\to 0$.
The asymptotics of $\beta_{x^0}$ for large $\Im k$ and (\ref{ba}) yield
\begin{equation}
|A_f(\theta,k)|<\exp(C_f\Im k)(1+\underline{O}(\Im k)^{-1})
\label{j2}
\end{equation}
for $\Im k\to +\infty$. The constant $C_f$ depends on $f$ only.

From (\ref{rep}), we also have the estimate for large $k$ within the
region $\{0<\Im k<C\}$
\begin{equation}
|A_f(\theta,k)|\leq \frac{C_{f,V}}{{\rm
dist}(k^2,\mathbb{R}_+)^{1/2}} \label{j3}\end{equation} where the
constant $C_{f,V}$ depends on $V$ and $f$.

The following simple identity is true (e.g., Denisov \cite{d3})
\begin{equation}
\sigma'_{f}(k^2)=C|k|^{d-2}\|A_{f}(\theta,k)\|^2_{L^2(\Sigma_1)},
\label{fact}
\end{equation}
here $k\in\mathbb{R}, k\neq 0$. It follows from the integration by parts and the limiting absorption
principle \cite{ag}. In \cite{d3}, this formula was proved for $d=3$ but the same argument works for any $d$.

Now, observe that the function
$$
g(k)=\log \|A_f(k,\theta)\exp(ikC_f)\|_{L^2(\theta\in \Sigma_1)}
$$
is subharmonic on $\mathbb{C}^+$ (the constant $C_f$ is chosen from
(\ref{j2})). Due to the properties of $A_f$ listed above, we can
apply the mean value inequality to $g$ within the domain
$\Omega_{\epsilon, L,M}$  bounded by the curves: $\gamma_1=\{z: \Im
z=0,$ $\epsilon<|\Re z|<L\}$, $\gamma_2=\{z: \Im z>0,
|z|=\epsilon\}$, $\gamma_3=\{z: |\Re z|=L, 0<\Im z<M\}$,
$\gamma_4=\{z: \Im z=M, |\Re z|<L\}$. Letting
$\gamma=\bigcup_{j=1}^4 \gamma_j$ and $\omega_{\epsilon, L,M}$ be
the harmonic measure of $\Omega_{\epsilon, L,M}$ aiming at $i$, we
have
$$
\int\limits_{\gamma} g(k)\omega_{\epsilon, L, M}(k)d|k|\ge g(i)
$$
Taking $L\to\infty$, then $M\to\infty$, and then $\epsilon\to 0$, we
have
\begin{equation}\label{e9}
\frac{1}{\pi}\int\limits_{\mathbb{R}} \frac{\log
\|A_{f}(\theta,k)\|_{L^2(\Sigma_1)}}{1+k^2}dk\geq \log
\|A_{f}(\theta,i)\|_{L^2(\Sigma_1)}-C_f
\end{equation}
Each of these limits is justified by (\ref{j1}),(\ref{j2}), and
(\ref{j3}).

On the other hand,
\[
\|A_{f}(\theta,i)\|_{L^2(\Sigma_1)}\gtrsim \int_{\Sigma_1}
A_{f}(\theta,i)d\theta=\int_{\mathbb{R}^3} \int_{\Sigma_1}
f(x^0)a_{x^0}(\theta, i)\, d\theta dx^0
\]
as follows from \eqref{ba}. Applying the first claim of Theorem
\ref{th1} to the right hand side of the above relation and
factorization (\ref{fact}) to the left hand side of \eqref{e9}, we
come to
\begin{equation}
\exp\left[\frac{1}{2\pi}\int\limits_\br \frac{\log \sigma'_{f}(k^2)}{1+k^2}dk\right]\ge
C_f\int f(x^0)\mathbb{E}_{x^0}
\left[ \exp\left(-\frac 12 \int\limits_0^\infty
V(X_\tau)d\tau\right)\right]dx^0 \label{th2n}
\end{equation}
Now, consider any continuous nonnegative $V$ and take truncations
$V^{(n)}=V(x)\cdot \mu_n(x)$ defined in the remark after Theorem
\ref{th1}. For each $n$, (\ref{th2n}) holds true. Let
$\sigma^{(n)}_{f}$ be the spectral measure of $f$ with respect to
the Schr\"odinger operator with potential $V^{(n)}$. It is
well-known that $\sigma^{(n)}_{f}\to \sigma_f$ in the weak-star
topology. Therefore, the semicontinuity of the entropy (see
Killip-Simon \cite{ks}, for instance) applied to the left hand side
of \eqref{th2n} and the monotone convergence theorem applied to its
right hand side give (\ref{th21}).
\end{proof}

As a corollary, we get that the absolutely continuous spectrum of
$H_V$ contains $\mathbb{R}_+$ if the potential $V$ is summable over
$X_t$ with positive probability. Checking the last property for the
concrete $V$ might be difficult and should be done on a
case-by-case basis. It is conceivable that one can handle the
situation when $V$ changes sign using the technique developed in
\cite{dp} but then the statements will be generic in coupling
constant. The condition that $f$ is nonnegative is not important and
can be dropped.

The following is a simple corollary of Theorem \ref{th2},  an
estimate for the transition probability (\ref{tr-pr}) and  Jensen's
inequality.

\begin{corollary}\label{c01}
Let $V$ and $f$ be as in the previous theorem. Then,
\[
\int\limits_\br \frac{\log \sigma'_f(k^2)}{1+k^2}dk\ge-C_{1,f}\int
\frac{V(x)}{|x|^{d-1}+1}dx-C_{2,f}
\]
\end{corollary}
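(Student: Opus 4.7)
The plan is to start from Theorem \ref{th2}, take logarithms, and apply Jensen's inequality twice -- first to move the exponential outside the path expectation, then to pull the outer $dx^0$-integration inside the logarithm -- in order to reduce the right-hand side of \eqref{th21} to a linear functional of $V$. The remaining work is then a pointwise estimate on the time-integrated transition kernel of $\{X_t\}$.

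With $d\mu(x^0):=\|f\|_1^{-1}f(x^0)\,dx^0$ treated as a probability measure on $\mathrm{supp}\, f$, concavity of $\log$ and the Jensen bound $\log\mathbb{E}[e^{Y}]\ge\mathbb{E}[Y]$ together turn the logarithm of \eqref{th21} into
\[
\frac{1}{2\pi}\int_{\br}\frac{\log\sigma'_f(k^2)}{1+k^2}\,dk\ge\log(C_f\|f\|_1)-\frac{1}{2\|f\|_1}\int V(y)\lp\int f(x^0)G^*(x^0,y)\,dx^0\rp dy,
\]
where $G^*(x^0,y):=\int_0^\infty p(x^0,y,\tau)\,d\tau$. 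Thus it suffices to prove the kernel estimate
\[
\int f(x^0)\,G^*(x^0,y)\,dx^0\le\frac{C_f}{|y|^{d-1}+1},\quad y\in\br^d,
\]
after which the corollary follows upon multiplying by $2\pi$ and absorbing constants into $C_{2,f}$.

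For the kernel estimate I would use the factorization $p(x,y,t)=(F(y)/F(x))\,\hat p(x,y,t)$ from Section \ref{s1} with $F(x)=|x|^{-\nu}I_\nu(|x|)$; integrating in $t$ yields $G^*(x^0,y)=2F(y)F(x^0)^{-1}G_0(x^0,y;i)$, where $G_0(\cdot,\cdot;i)$ is the Green's function of $-\Delta+1$ on $\br^d$. Since $F$ is smooth and bounded between two positive constants on the compact $\mathrm{supp}\, f$, the large-$|y|$ asymptotics of $I_\nu$ and $K_\nu$ (equivalently, the $t$-integrated form of \eqref{tr-pr}) yield
\[
F(y)G_0(x^0,y;i)\lesssim \frac{\exp(|y|-|x^0-y|)}{|y|^{(d-1)/2}\,|x^0-y|^{(d-1)/2}},
\]
and the triangle inequality $|y|-|x^0-y|\le|x^0|$ kills the exponential uniformly for $x^0\in\mathrm{supp}\, f$, leaving exactly the polynomial decay $|y|^{-(d-1)}$ needed at infinity. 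On any bounded $y$-region the only remaining obstruction is the standard $|x^0-y|^{-(d-2)}$ (or logarithmic, for $d=2$) singularity of $G_0$, which is integrable against the compactly supported $f$ and is absorbed by $(|y|^{d-1}+1)^{-1}\sim 1$ there.

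The main obstacle, in my view, is the uniform cancellation of the exponential factor $e^{|y|-|x^0-y|}$ as $x^0$ ranges over $\mathrm{supp}\, f$: not deep, but it must be executed with the correct reverse triangle inequality so as not to spoil the polynomial decay. Everything else is bookkeeping with Bessel asymptotics and a single Fubini step.
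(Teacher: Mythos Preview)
Your proof is correct and follows essentially the same path the paper sketches: Theorem \ref{th2}, Jensen's inequality, and the transition-probability estimate \eqref{tr-pr}. Your use of the exact factorization $p(x,y,t)=(F(y)/F(x))\hat p(x,y,t)$ to compute $G^*(x^0,y)=2F(y)F(x^0)^{-1}G_0(x^0,y;i)$ in closed form is slightly cleaner than invoking the asymptotic \eqref{tr-pr} directly, but the content is the same.
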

This result is not new; see Laptev-Naboko-Safronov \cite{lns1, lns2}
and Theorem~\ref{lns}.

Let $\Omega\subset \mathbb{R}^d$ be an unbounded domain.  We have
\begin{corollary}
Let $V$ be as in Theorem \ref{th2}, $V(x)=0$ on $\Omega$, and
$\mathbb{P}_{x^0}(X_t\in \Omega, \forall t )>0$. Then the a.c.
spectrum of  $H$ is equal to $\mathbb{R}_+$. Moreover,
\begin{equation}
a_{x^0}(\theta)\gtrsim \mathbb{P}_{x^0}(G_t\in \Omega, \forall
t)\label{lapa}
\end{equation}
where $G_t$ is from  (\ref{fk}).
\end{corollary}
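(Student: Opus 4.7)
The plan is to deduce \eqref{lapa} directly from part (2) of Theorem \ref{th1}, and to obtain the spectral statement by feeding the averaged Feynman--Kac formula of Theorem \ref{th1}(1) into Theorem \ref{th2}.

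For the amplitude bound, observe that on the event $\{G_t\in \Omega \text{ for all } t\ge 0\}$ the integrand in \eqref{fk} vanishes identically, because $V\equiv 0$ on $\Omega$, and hence the exponential equals $1$ there. Restricting the expectation
\[
a_{x^0}(\theta)=C_2\,\mathbb{E}_{x^0}\!\left[\exp\!\left(-\tfrac12\int_0^\infty V(G_\tau)\,d\tau\right)\right]
\]
to this event immediately gives $a_{x^0}(\theta)\ge C_2\,\mathbb{P}_{x^0}(G_t\in \Omega,\ \forall t\ge 0)$, which is exactly \eqref{lapa}.

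For the claim about $\sigma_{ac}(H)$, I first note that $V\ge 0$ forces $H_V\ge 0$, hence $\sigma(H)\subseteq \mathbb{R}_+$, so only the reverse inclusion $\mathbb{R}_+\subseteq \sigma_{ac}(H)$ requires proof. Applying the same vanishing-on-$\Omega$ observation to $X_t$ in place of $G_t$ gives
\[
u(y):=\mathbb{E}_{y}\!\left[\exp\!\left(-\tfrac12\int_0^\infty V(X_\tau)\,d\tau\right)\right]\ \ge\ \mathbb{P}_{y}(X_t\in \Omega,\ \forall t\ge 0)
\]
for every $y\in\mathbb{R}^d$, so $u(x^0)>0$ by assumption. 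The critical step, and the only nontrivial one, is to upgrade this pointwise positivity to positivity on a set of positive Lebesgue measure. I would do this by observing that $u$ is a nonnegative solution of $Au=\tfrac12\,V\,u$ with $A$ as in \eqref{e4}, so in particular $u$ is $A$-harmonic on $\Omega$; the strong maximum principle for second-order elliptic operators (see e.g.\ Landis \cite{landis}) then guarantees $u>0$ throughout the connected component $U$ of $\Omega$ containing $x^0$. Alternatively, the Feller continuity of $X_t$ combined with the strong Markov property yields the same conclusion by a direct path-following argument.

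Once such an open set $U$ is produced, I take any nonzero nonnegative $f\in L^2(\mathbb{R}^d)$ with compact support in $U$. Theorem \ref{th2} then gives
\[
\exp\!\left[\frac{1}{2\pi}\int_{\mathbb{R}} \frac{\log \sigma'_f(k^2)}{1+k^2}\,dk\right]\ \ge\ C_f\int f(y)\,u(y)\,dy\ >\ 0,
\]
so $\int_{\mathbb{R}} \log \sigma'_f(k^2)/(1+k^2)\,dk>-\infty$, which forces $\sigma'_f(E)>0$ for almost every $E\in \mathbb{R}_+$. In particular the topological support of the absolutely continuous part of $\sigma_f$ equals $\mathbb{R}_+$, and consequently $\sigma_{ac}(H)\supseteq \mathbb{R}_+$, completing the proof. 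Beyond the single-point-to-set positivity step, the entire argument is a direct invocation of Theorems \ref{th1} and \ref{th2}.
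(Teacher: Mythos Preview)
Your argument is correct and matches the paper's intended route: the paper states this corollary without proof, treating it as an immediate consequence of Theorems \ref{th1} and \ref{th2}, and your derivation of \eqref{lapa} from Theorem \ref{th1}(2) together with the spectral conclusion from Theorem \ref{th2} is exactly that. You are also right to isolate the single-point-to-open-set positivity as the only nontrivial detail, and your justification via $A$-harmonicity on $\Omega$ (equivalently, the strong Markov property at the exit time from a small ball) is the natural one.
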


\section{The modified  harmonic measure, capacity and their properties}\label{s3}
The results of the previous section suggest that one needs to study
functions $a_{x^0}$ in different directions. One can use
probabilistic or analytic methods for this purpose. We are going to
consider an important case when $V=0$ on $\Omega$ and one has no
control on the size of $V$  on $E=\Omega^c$. We only handle  $d=3$
in this section and explain  how the results we obtain look like for
$d=2$. The situation when $d>3$ can be treated similarly and will be
discussed briefly in the Appendix B. Without loss of generality we
can always assume that $\theta=\pm e^1$.

Let $\Omega\subset \br^3$ be a given domain with the compact
$E=\Omega^c$. Take $\Gamma=\partial \Omega$ and consider the
Dirichlet problem
\begin{equation}
\frac 12 \Delta u+u_{x_1}=0, \quad u|_\Gamma=f \label{e95}
\end{equation}
where $u$ decays at infinity, $f\in C(\Gamma)$. The solutions of
Dirichlet problem for general elliptic equations on the arbitrary
bounded domains go back to P\"uschel \cite{puschel} and Oleinik \cite{oleinik}.

The domains $\Omega$ we are interested in are unbounded but the same results easily
follow by an approximation argument. Furthermore,  it is
proved in \cite{puschel, oleinik} that the regular points for (\ref{e95}) coincide with the
Wiener regular points (i.e., regular points w.r.to $\Delta$) and thus can be identified by the Wiener's test \cite[Sect. III.7]{GM}.  That said, one can fix any reference point $x^0\in \Omega$
and consider the solution of (\ref{e95}) as a linear bounded
functional $\Phi_{x^0}$ of $f$. By the Riesz
representation theorem, we have
$$
u(x^0)=\Phi_{x^0}(f)=\int_\Gamma f d\omega_{x^0}
$$
The measure  $\omega(x^0, A, \Omega), \ A\subseteq \Gamma$ is called
the modified harmonic measure of $A$. Clearly, any regular point of
$\Gamma$ belongs to the support of the modified harmonic measure and
the set of irregular points has capacity zero, see Mizuta \cite[ p.
136]{mizuta}.

We will be mostly interested in the
estimates on $\omega(. ,\Gamma, \Omega)$, which is the solution of
(\ref{e95}) with $f=\chi_A, A\subseteq \Gamma$. It is known \cite{oleinik} that there exist
domains  $\Omega_n$ with piece-wise smooth boundaries such that
$\overline{\Omega}_n\subset \Omega_{n+1}$, $\bigcup\limits_{n}\Omega_n=\Omega$,  and
\begin{equation}
\omega(x^0,\Gamma,\Omega)=\lim_{n\to\infty}
\omega(x^0,\Gamma_n,\Omega_n), \quad \Gamma_n=\partial\Omega_n
\label{appr3}
\end{equation}
This approximation allows us to assume the smoothness of the
boundary later on.

The probabilistic meaning of $\omega$ is
\[
\omega(x^0,A,\Omega)=\mathbb{P}_{x^0}(G_t \ {\rm hits} \ \Gamma
 \ {\rm for \ the \ first \ time \ at } \  A  )
\]
where $G_t=x^0+t\cdot e^1+B_t, x^0\in\Omega$. If one solves
\[
-\frac 12 \Delta \psi=-\frac 12\psi, \ x\in \Omega, \quad
\psi|_\Gamma=\chi_Ae^{x_1},\quad
\]
then
\[
\omega(x)=\psi(x) e^{-x_1}
\]
Thus, assuming that $\Omega$ has a piece-wise smooth boundary, the
measure $\omega$ has piece-wise smooth density as well and it is
given by
\[
-e^{-x_1+\xi_1}\frac{\partial}{\partial n_\xi} G(x; \xi),
\]
where  $G$ is the Green's function for $2(-\Delta_D+1)^{-1}$, $\Delta_D$ is the Laplacian with Dirichlet boundary condition on $\Gamma$,  and $\xi=(\xi_1,\xi')\in \Gamma$.

The estimate from below for  $1-\omega(x^0,\Gamma,\Omega)$ is what
we need to control in (\ref{lapa}) in order to guarantee that the
asymptotics of Green's function in the direction $e^1$ is
comparable to the asymptotics of the free Green's function. The
similar problem of visibility of infinity from the origin (for the
standard Brownian motion) was recently considered in Carroll--Ortega-Cerd\`a \cite{J}.

In this section, we will also build the potential theory for the
operators
\[
L^\pm=L^{\pm e^1}=\frac 12 \Delta \mp \partial_{x_1}
\]
A particular attention is paid to the study of the corresponding
capacity and harmonic measure.  The definitions and ideas of the
proofs are close in the spirit to the constructions from the potential theory for  the
elliptic and parabolic cases,  e.g., Doob \cite{doo}. From this point of view, many results
are rather standard. In the meantime, we feel like we have to write them up for the reader's
convenience. Only Theorem~\ref{proj} is substantially new and seems
not to be known even for the parabolic capacities to the best of our
knowledge.

Through the rest of the paper, the prefix ``$L^\pm$" in front of
adjectives will be systematically dropped, i.e.,  $L^\pm$-harmonic
functions will be called just harmonic functions etc.

\subsection{Potential theory: a special case}\label{ss31}
The content of this subsection follows the lines of the general potential theory as presented in Landkof \cite{lan}, Hayman-Kennedy \cite{hk} and Garnett-Marshall \cite{GM}.

Let $z=(z_1,z'), \xi=(\xi_1,\xi')\in\br^3$, and the reference direction be $+e^1$. We introduce the potential
\begin{equation}\label{e9501}
K^-(z,\xi)=2G_0(z,\xi) e^{\xi_1-z_1}
\end{equation}
 where $G_0$ is Green's function for $(-\Delta+1)^{-1}$ on
$\mathbb{R}^3$, that is
\[
G_0(z,\xi)=G_0(z,\xi; i)=\frac 1{4\pi}\frac{e^{-|z-\xi|}}{|z-\xi|}
\]
Obviously, $G_0(z,0)\sim \frac 1{4\pi}|z|^{-1}$ as $|z|\to 0$.
So, for small $|z-\xi|$,  the potential $K(z,\xi)$ behaves like the
standard elliptic potential for $\br^3$ and for $|z-\xi|$ large it is
similar to the parabolic potential. Thus, we expect two regimes: the
microscopic one will mimic the elliptic theory and macroscopic will
have some resemblance to the parabolic case.

For  the ``dual" reference direction $-e^1$, the differential operator is $L^+$ and
$$
K^+(z,\xi)=2G_0(z,\xi) e^{z_1-\xi_1}
$$
It is important that
\begin{equation}
K^-(z,\xi)=K^+(\xi, z)\label{refl}
\end{equation}

Let $E$ be a compact subset of  $\br^3$ and $\cp(E)$ be the set of probability Borel measures on $E$. For $\mu\in\cp(E)$, put
\[
U_{\mu}^\pm(z)=\int_E K^\pm(z,\xi)d\mu(\xi)
\]
to be the corresponding potential. Clearly, $U_\mu^\pm$ is lower
semicontinuous. Consider
\begin{equation}
C^\pm(E)=\lt(\inf_{\mu\in \cp(E)} \sup_{z\in \mathbb{R}^3}
U_{\mu}^\pm(z)\rt)^{-1} \label{def1}
\end{equation}

\begin{definition}\label{d1} We call $C^\pm(E)$ the modified capacity
of a set $E$ in the direction $\mp e^1$.
\end{definition}
It is clear that the capacity is translation-invariant but is not invariant under the rotation, in
general. Since $E$ is a compact, $C^{\pm}(E)=0$ if and only if the elliptic capacity of $E$ is zero as well (i.e., the polar sets in our case are the same as in the standard elliptic theory).
Below, we mostly discuss the ``$-$"-case; the ``$+$"-case can be handled similarly.

The capacity can also be defined in the following way. Introduce the class of admissible measures $\ca^-(E)$ as follows:
$\nu\in \ca^-(E)$ iff $\nu$ is positive measure supported on $E$ and
\begin{equation*}
\sup_{z\in \mathbb{R}^3} U^-_{\nu}(z)=1
\end{equation*}
Then, we have
\begin{equation}
\sup_{\nu\in \ca^-(E)}\nu(E)=C^-(E) \label{second}
\end{equation}
Assume $0<C^-(E)<\infty$ and let  $\{\nu_n\}$ be a maximizing
sequence  to \eqref{second}. We denote one of  its weak limits by
$\nu^-$. Then, $\nu^-(E)=C^-(E)$. We have (\cite{GM}, Lemma 4.2)
\[
U_{\nu^-}^-(z)\leq \liminf_{n\to\infty} U_{\nu_n}^-(z)\leq 1
\]
and
\[
\alpha=\sup_{z\in\mathbb{R}^3} U_{\nu^-}^-(z)\leq 1
\]
Thus, $\hat\nu=\nu^-/\alpha\in \ca^-(E)$ and
$\hat\nu(E)=C^-(E)/\alpha\geq C^-(E)$ which means that $\alpha=1$ and
$\nu^-$ is a maximizer. Therefore, a minimizer $\mu^-$ for
(\ref{def1}) exists and is equal to $\nu^-/C^-(E)$.

The main results of the elliptic theory are true in our case as well
and we list some of them below for the reader's convenience. They
are stated for $C^-$ but their analogs hold with respect to any
direction.
\begin{itemize}
\item[1.] Monotonicity:
for  $E_1\subseteq E_2$, then $C^-(E_1)\leq C^-(E_2)$.

\item[2.] Subadditivity:
if $E= \bigcup\limits_j E_j$, $E_j$ are disjoint, then
\[
C^-(E)\leq \sum_j C^-(E_j)
\]
Indeed, we use (\ref{second}). If $\nu^-$ is a maximizer in
(\ref{second}) for $E$ and $\nu_j$ is its restriction to $E_j$,
then
\[
\nu_j(E_j)\leq C^-(E_j) \sup_z U^-_{\nu_j} (z)\leq C^-(E_j)\sup_z
U^-_{\nu^-}(z)=C^-(E_j)
\]
Hence
\[
C^-(E)=\nu^-(E)=\sum_j \nu_j(E_j)\leq \sum_j C^-(E_j)
\]

\item[3.] Macroscopic scale:
let $T_h=[0,h^2]\times \Pi'_h, \ h>1$, and $\Pi'_h=[0,h]^2$. Then
\[
C^{-}(T_h)\sim h^2
\]
Indeed, take $z_h=-h^2 e^1$. Then, for any $\mu\in
\cp(T_h)$, we have
\[
U^-_{\mu}(z_h)\sim \frac 1{h^2}
\]
thus
\[
\inf_{\mu}\sup_z U^-_\mu(z)\ge \frac{C_1}{h^2}
\]
On the other hand, if $\mu$ is the normalized Lebesgue measure on $T_h$, we see
\[
\sup_z U^-(z)\leq \frac{C_2}{h^2}
\]
and thus
\[
\inf_{\mu} \sup_z U^-_{\mu}(z)\leq \frac{C_2}{h^2}
\]

\item[4.] Microscopic scale:
the following proposition is immediate from the properties of the
kernel $K^-$ and the definition of the standard Wiener capacity
$C_W(E)$ (i.e., the one related to $\Delta$):
\begin{proposition}\label{p10}
If ${\rm diam}(E)\lesssim 1$, then $C^{-}(E)\sim C_W(E)$.
\end{proposition}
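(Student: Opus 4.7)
The plan is to reduce the comparison $C^-(E)\sim C_W(E)$ to a pointwise comparison of the kernel $K^-$ with the Newtonian kernel on a bounded neighborhood of $E$, combined with a maximum principle argument that controls each potential at infinity.

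First I would fix a ball $B$ of diameter comparable to $1$ containing $E$, together with a fixed concentric enlargement $B'$ (say, of diameter $10\,\diam(B)$), and rewrite
$$
K^-(z,\xi)=\frac{1}{2\pi|z-\xi|}\,e^{(\xi_1-z_1)-|z-\xi|}.
$$
Since $|\xi_1-z_1|\le|z-\xi|$, the exponent is always nonpositive, and for $z\in B'$ and $\xi\in E$ the quantity $|z-\xi|$ is bounded by an absolute constant, so the exponential factor is bounded away from both $0$ and $\infty$. Hence $K^-(z,\xi)\sim|z-\xi|^{-1}$ uniformly on $B'\times E$, and integrating against any $\mu\in\cp(E)$ yields $U^-_\mu(z)\sim U^W_\mu(z)$ uniformly for $z\in B'$, where $U^W_\mu(z):=\int(4\pi|z-\xi|)^{-1}\,d\mu(\xi)$ is the Newtonian potential.

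Next I would pass from $\sup_{\br^3}$ to $\sup_{B'}$. A short computation using $\Delta G_0=G_0$ off the diagonal gives $L^-K^-(\cdot,\xi)=0$ for $z\ne\xi$, so $U^-_\mu$ is $L^-$-harmonic on $\br^3\setminus E$, nonnegative, and tends to $0$ uniformly in $\mu\in\cp(E)$ as $|z|\to\infty$ (thanks to the pointwise bound $K^-(z,\xi)\le(2\pi|z-\xi|)^{-1}$ together with the compactness of $E$). Since $L^-$ is uniformly elliptic with bounded coefficients and no zero-order term, the classical maximum principle applied to the unbounded domain $\br^3\setminus\overline{B'}$ gives $\sup_{\br^3}U^-_\mu=\sup_{B'}U^-_\mu$; the analogous reduction for $U^W_\mu$ (with $L^-$ replaced by $\Delta$) is standard.

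Combining these two steps produces constants $c_1,c_2>0$, depending only on the diameter bound, such that $c_1\sup_z U^W_\mu(z)\le\sup_z U^-_\mu(z)\le c_2\sup_z U^W_\mu(z)$ for every $\mu\in\cp(E)$. Taking infima over $\cp(E)$ and inverting then yields $C^-(E)\sim C_W(E)$. The step most likely to require care is the maximum principle on the unbounded exterior: it relies on uniform-in-$\mu$ decay of $U^-_\mu$ at infinity, and this is the one place where the exponential factor in $K^-$ matters beyond being a harmless bounded multiplier on $B'$.
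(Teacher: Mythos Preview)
Your proof is correct and follows exactly the approach the paper has in mind: the paper states the proposition as ``immediate from the properties of the kernel $K^-$ and the definition of the standard Wiener capacity'' without writing out any details, and your argument is precisely the natural elaboration of that remark (pointwise comparison $K^-\sim |z-\xi|^{-1}$ on a bounded neighborhood, plus the maximum principle to reduce $\sup_{\br^3}$ to a local supremum). There is nothing to add.
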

So, for example,  $C^{-}(B(z,r))\sim r$, as $r\to 0$.\smallskip

\item[5.] If $G_n$ is a sequence of compacts, $G_{n+1}\subseteq
G_{n}, \bigcap\limits_n G_n=E$, then
\begin{equation}
\lim_{n\to\infty} C^{-}(G_n)=C^{-}(E) \label{lim2}
\end{equation}

Indeed, assume that $\nu_n$ is a maximizer in $(\ref{second})$ for
$G_n$. Then, one can find $\nu_{k_n}$ that converges weakly to $\nu$
supported on $E$. Since $U^-_{\nu_n}(x)\leq 1$, we have
$U^-_{\nu}(x)\leq 1$ as well. Thus, $C^-(E)\geq \nu(E)$. On the
other hand, $\nu(E)=\lim\limits_n \nu_{k_n}(G_{k_n})=\lim\limits_n
C^-(G_{k_n})\geq C^-(E)$ since $C^-(G_n)\geq C^-(E)$ by
monotonicity.

\end{itemize}

The last approximation result with $G_n$ having the piece-wise
smooth boundary allows one, like for modified harmonic measure, to
reduce the analysis to the smooth case.

At last, let $\theta$ be a fixed vector from $\Sigma_1$ and $x=\theta r, |x|=r$. For any measure $\mu$, we have
\begin{eqnarray}
&&\lim_{r\to+\infty}r U_{\mu}^-(\theta r)=0, \quad  \theta\neq-e^1, \nonumber\\
&&\lim_{r\to+\infty} 2\pi r\,  U^-_{\mu}(-e^1 r)=\mu(E) \label{second1}
\end{eqnarray}

\subsection{More on modified harmonic measure and capacity}\label{ss32}

In this subsection, we will relate the capacity to the modified
harmonic measure of a compact. We first assume that $E$ has a
piece-wise smooth boundary, e.g., it is a finite union of closed
balls. Then the approximation results (\ref{appr3}) and (\ref{lim2})
will enable us to handle the case of any compact.

Take a compact $E$  in the half-space $\{x =(x_1,x'): x_1>1\}$.
Let $\alpha_z$ be the density of the harmonic measure  for the
half-space $\Pi_-=\{x: x_1<0\}$ and the reference point $z\in
\Pi_-$, e.g.,
\[
\mathbb{P}_z (G_t {\rm \ first \ hits \ the \ plane}
\ \{x: x_1=0\} {\rm \ at \ set \ } B)=\int_B \alpha_z(y')dy'
\]
where $B\subset \{x: x_1=0\}$. Then,
\[
\omega^-(z, A,\Omega)=\int_{\{x_1=0\}}
\alpha_z(y')\omega^-(y',A,\Omega) dy'
\]
for any $A\subset \Gamma$. Then, introduce the sweeping of the two-dimensional Lebesgue measure on $\{x_1=0\}$ to $\Gamma$,
\[
p^-(A)=\int_{\{x_1=0\}} \omega^-(y', A,\Omega)dy'
\]
The explicit formula for $\alpha_z$ is easy to write down, namely
$$
\alpha_z(y')=\left.-\frac 1{2\pi}  e^{-z_1} \frac{\pt}{\pt
y_1}\lt(\frac{e^{-|z-y|}}{|z-y|}-\frac{e^{-|z_*-y|}}{|z_*-y|}\rt)\right|_{y_1=0}
$$
where $z_*$ is symmetric to $z$ with respect to $\{y: y_1=0\}$. The
above formulae imply  that
\[
\omega^-((-r,y'), A,\Omega) \approx \frac{1}{2\pi r}\,  p^-(A)
\]
as $r\to+\infty$ and $y'$ is fixed. Notice that, due to our assumptions on $\Gamma$,
 $p^-$ is a measure on $\Gamma$ with piece-wise smooth density. It can be interpreted as follows:
if $L^-u=0$ on $\Omega$, $u\in C(\overline{\Omega})$, and decays at
infinity, then
\begin{equation}\label{e91}
\lim_{r\to+\infty} (2\pi r)\, u(-r,y')=\int_\Gamma u(\xi)dp^-(\xi)
\end{equation}
Consider  the following potential
\[
U^+_{p^-}(z)=\int_E K^-(\xi,z)d p^-(\xi)\, \lt(=\int_E K^+(z,\xi)d p^-(\xi)\rt)
\]
If $z$ belongs to the interior of $E$, then $f(\xi)=K^-(\xi,z)$ satisfies $L^-f=0$ outside
$E$ and therefore, referring to \eqref{e91},
\begin{equation}\label{e92}
U^+_{p^-}(z)=\lim_{r\to+\infty} (2\pi r)\, K^-((-r,0), z)=1
\end{equation}
which holds on the interior of $E$ and, by continuity, on all of
$E$. By the maximum principle for the operator $L^+= \frac 12
\Delta-\partial_{x_1}$ \cite[Ch. 6]{lan}, one has $\sup\limits_{z\in
\mathbb{R}^3} U^+_{p^-}(z)=1$, and so
\begin{equation}
C^{+}(E)\geq p^{-}(E)\label{i1}
\end{equation}
Similarly,
\begin{equation}
C^{-}(E)\geq p^{+}(E)\label{i2}
\end{equation}
and, for any $z\in E$
\[
U^-_{p^+}(z)=1, \quad  U^+_{p^-}(z)=1
\]
The duality argument then gives
\[
p^+(E)=\int_E \left(\int_E K^-(\xi,z)dp^-(\xi)\right) dp^+(z)=p^-(E)
\]
by Fubini. Moreover, one has
\begin{eqnarray*}
p^+(E)&\geq& \int_E \left( \int_E K^-(\xi,z)d\nu^+(\xi)\right)
dp^+(z)= \int_E \left( \int_E K^-(\xi,z)dp^+(z)\right) d\nu^+(\xi)\\
&=&\nu^+(E)=C^+(E)
\end{eqnarray*}
Therefore, for $E$ with piece-wise smooth boundary, we have
\begin{equation}
C^+(E)=C^-(E)=p^+(E)=p^-(E) \label{ravenstvo}
\end{equation}
and $p^+$ is a minimizer in $(\ref{second})$. Moreover, $U^-_{p^+}(x)$ is solution to the
 $L^-$--Dirichlet problem in $\Omega$ with boundary value $f=1$.

Now, consider an arbitrary compact $E$. The measures $p^{\pm}$ can be defined in the same way, they are both supported on $\Gamma$. Then, we have

\begin{theorem}\label{th_trii}\hfill
\begin{itemize}
\item[\it i) \ ] For any compact $E$
\[
C^+(E)=C^-(E)=p^+(E)=p^-(E)
\]
\item[\it ii)]  $p^+$ is a maximizer for (\ref{second}) and $U^-_{p^+}=1$ on
the interior of $E$. Moreover, $U^-_{p^+}$ is the generalized
solution to the  Dirichlet problem
\begin{equation}\label{e1001}
L^- U^-_{p^+}=0,\quad U^-_{p^+}\big|_{\Gamma}=1
\end{equation}
the boundary condition being understood quasi-everywhere.
\end{itemize}
\end{theorem}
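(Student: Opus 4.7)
The identity (\ref{ravenstvo}) has already been established for compacts with piece-wise smooth boundary, so the natural plan is to approximate an arbitrary compact $E$ from outside. Using (\ref{appr3}) (or its exterior version), pick smooth domains $\Omega_n$ with $\overline{\Omega_n}\subset\Omega_{n+1}$, $\bigcup_n\Omega_n=\Omega$, and set $G_n=\Omega_n^c$, $\Gamma_n=\partial G_n$. Then $G_{n+1}\subset G_n$, $\bigcap_n G_n=E$, and (\ref{ravenstvo}) applies to each $G_n$, giving $C^+(G_n)=C^-(G_n)=p^+_n(\Gamma_n)=p^-_n(\Gamma_n)$.

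\textbf{Proof of (i).} By (\ref{lim2}), $C^{\pm}(G_n)\to C^{\pm}(E)$. For the sweep measures, the integrand
\[
\omega^-(y',\Gamma_n,\Omega_n)=\mathbb{P}_{y'}\bigl(G_t\text{ hits }G_n\bigr)
\]
is monotone decreasing in $n$ and bounded by $\omega^-(y',\Gamma_1,\Omega_1)$, which is integrable over $\{x_1=0\}$ since its integral equals $p^-_1(\Gamma_1)=C^-(G_1)<\infty$. The monotone convergence theorem therefore yields $p^-_n(\Gamma_n)\to p^-(E)$, and symmetrically $p^+_n(\Gamma_n)\to p^+(E)$. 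This forces all four quantities to coincide.

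\textbf{Proof of (ii).} The measures $\{p^+_n\}$ are supported in the fixed compact $\overline{G_1}$ with bounded masses, so Prokhorov gives a weak subsequential limit $\tilde p$; since $p^+_n$ is supported in $\Gamma_n\subset G_n$, the limit is supported in $\bigcap_n G_n=E$, and Step~(i) gives $\tilde p(E)=C^+(E)$. Because $K^-$ is lower semi-continuous and $U^-_{p^+_n}\leq 1$ uniformly (from the smooth case), the Fatou-type inequality for potentials (\cite{GM}, Lemma~4.2) yields $U^-_{\tilde p}(z)\leq\liminf_n U^-_{p^+_n}(z)\leq 1$. Hence $\tilde p\in\ca^-(E)$ with $\tilde p(E)=C^-(E)$, so $\tilde p$ is a maximizer; we identify it with $p^+$ and drop the tilde. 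For $z\in\operatorname{int}(E)$, eventually $z\in\operatorname{int}(G_n)$, so $U^-_{p^+_n}(z)=1$; and since $\operatorname{supp}(p^+_n)\subset\Gamma_n$ retreats to $\Gamma$, the function $K^-(\cdot,z)$ is continuous and bounded on $\operatorname{supp}(p^+_n)$ for $n$ large, so weak convergence yields $U^-_{p^+}(z)=1$.

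\textbf{Boundary behavior and main obstacle.} For the Dirichlet problem, the comment after (\ref{e95}) gives that $L^{\pm}$-regular points coincide with $\Delta$-regular ones and the exceptional set has elliptic capacity zero; by Proposition~\ref{p10} it also has modified capacity zero. At a regular $\xi_0\in\Gamma$, approaching from the interior of $E$ gives $U^-_{p^+}\to 1$ trivially; approaching from $\Omega$, a Frostman-type argument (continuity of the restriction $U^-_{p^+}\!\mid_E$ upgraded from lower semi-continuity via maximality of $p^+$ and the maximum principle for $L^-$) shows the same limit, and the exceptional irregular set has capacity zero. The main technical hurdle is the interchange of limits in Steps (i) and the weak-limit identification: one must control $p^{\pm}_n$ uniformly at infinity using the transition-probability asymptotics (\ref{tr-pr}) and exploit the monotonicity of $\omega^-(y',\Gamma_n,\Omega_n)$ in $n$; once the total masses converge, tightness and the uniqueness of the maximizer (via strict convexity of the energy functional, as in the classical elliptic case in \cite{lan,GM}) identify $\tilde p$ with the sweep $p^+$ defined intrinsically for $E$.
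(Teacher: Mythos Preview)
Your overall plan---approximate $E$ from outside by smooth $G_n$, use (\ref{ravenstvo}) on each $G_n$, and pass to the limit via (\ref{lim2}) and (\ref{appr3})---is exactly the paper's approach, and your part (i) via monotone convergence of the total sweep masses is correct and is essentially how the paper handles it as well.

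There is, however, a circularity in part (ii). You extract a subsequential Prokhorov limit $\tilde p$ of the $p^+_n$, show it is a maximizer, and then write ``we identify it with $p^+$,'' later justifying this with ``uniqueness of the maximizer.'' But uniqueness of the maximizer is the theorem proved \emph{after} Theorem~\ref{th_trii}, and its proof explicitly invokes Theorem~\ref{th_trii}. So you cannot appeal to uniqueness here. Without the identification $\tilde p=p^+$, you have only shown that \emph{some} weak limit is a maximizer and satisfies $U^-_{\tilde p}=1$ on $\operatorname{int}(E)$, not that the intrinsically defined sweep $p^+$ does.

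The paper avoids the detour entirely: it asserts directly that $p^+_n\to p^+$ weakly. This is straightforward once you notice your monotone-convergence argument for $f\equiv 1$ upgrades to arbitrary bounded continuous test functions. For such $f$, the function $u_n(y')=\int_{\Gamma_n}f\,d\omega^+(y',\cdot,\Omega_n)$ is the $L^+$-harmonic extension of $f$ to $\Omega_n$, and by the very construction of the generalized solution in \cite{oleinik,puschel} (which is what (\ref{appr3}) encodes), $u_n(y')\to u(y')=\int_\Gamma f\,d\omega^+(y',\cdot,\Omega)$ pointwise. Since $|u_n(y')|\le\|f\|_\infty\,\omega^+(y',\Gamma_1,\Omega_1)$, which you already observed is integrable, dominated convergence gives $\int f\,dp^+_n\to\int f\,dp^+$. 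With weak convergence to the actual sweep $p^+$ in hand, Fatou gives $U^-_{p^+}\le 1$, your argument for $U^-_{p^+}=1$ on $\operatorname{int}(E)$ goes through verbatim, and the Dirichlet characterization plus quasicontinuity yields the boundary condition quasi-everywhere---no uniqueness needed.
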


\begin{proof}
Consider a sequence of compacts $G_n$ with piece-wise smooth
boundaries that decreases to $E$ as in \eqref{lim2}. Obviously,
$p^+_{n}$ converges weakly to $p^+$. Moreover, $U^-_{p^+_n}(x)=1$ on
$E$ and thus $U^-_{p^+}(x)=1$ for any interior point  $x\in E$ (if
there is any). Since $U^-_{p^+_n}$ converges to $U^-_{p^+}$ on
$\Omega$ and each $U^-_{p^+_n}$ solves the Dirichlet problem for
$G_n$ with boundary value $f=1$, we get that $U^-_{p^+}$ is
generalized solution to \eqref{e1001}. Since $U^-_{p^+}$ is also
quasicontinuous on $\mathbb{R}^3$, we get $U^-_{p^+}=1$
quasi-everywhere on $\Gamma$. Since  we have $U^-_{p^+}\leq 1$ on
$\br^3$, $p^+$ is admissible. On the other hand, by (\ref{lim2}) and
(\ref{appr3}), we get $C^-(E)=\lim\limits_{n}
C^-(G_n)=\lim\limits_{n} p^+(G_n)=p^+(E)$. This shows that $p^+$ is
a maximizer.
\end{proof}
From now on, the symbol $C(E)$ will denote  $C^-(E)=C^+(E)$.

The proof of the forthcoming uniqueness result relies on the
following extended minimum principle.

\begin{theorem} Let $O$ is bounded domain and $u$ is bounded
$L^-$--harmonic function on $O$. If $S$ is polar subset of $\partial
O$ and $\liminf_{x\to\xi} u(x)\geq M$ for any $\xi\in \partial O, \xi\notin S$, then
either $u(x)>M$ or $u(x)=M$ in $O$.\label{gmp}
\end{theorem}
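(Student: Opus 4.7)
First, by replacing $u$ with $u-M$ (constants are $L^-$-harmonic), we may reduce to $M=0$: the new hypothesis reads $\liminf_{x\to\xi} u(x)\geq 0$ for $\xi\in\partial O\setminus S$, and it suffices to show $u\geq 0$ on $O$. Indeed, once this is established, the dichotomy $u\equiv 0$ or $u>0$ follows from the classical strong minimum principle for the uniformly elliptic operator $L^-=\tfrac{1}{2}\Delta+\partial_{x_1}$ on the connected bounded domain $O$ (see \cite{landis}). The strategy is to build an $L^-$-superharmonic barrier $v\geq 0$ on $\br^3$ that is finite on $O$ but drives $\liminf_{x\to\xi}(u+\eps v)\geq 0$ at every point of $\partial O$, and then apply the ordinary minimum principle to $u+\eps v$ and let $\eps\to 0^+$.

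The barrier is constructed as follows. Since $S$ is polar, $C(S)=0$. Using the outer regularity of $C$ (a consequence of monotonicity together with (\ref{lim2}) applied to decreasing sequences of compacts), choose open sets $W_n\supset S$ with $C(\ovl{W_n})<2^{-n}$. By Theorem~\ref{th_trii} applied to $\ovl{W_n}$, there is an equilibrium measure $\mu_n=p^+_{\ovl{W_n}}$ with total mass $\mu_n(\ovl{W_n})=C(\ovl{W_n})<2^{-n}$, whose potential $w_n:=U^-_{\mu_n}$ is $L^-$-superharmonic on $\br^3$, satisfies $0\leq w_n\leq 1$, and equals $1$ quasi-everywhere on $\ovl{W_n}$. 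Set $v:=\sum_{n\geq 1} w_n$. For any $z^0\in O$, since $S\cap O=\emptyset$ one has $\mathrm{dist}(z^0,\ovl{W_n})\geq \delta>0$ for all large $n$, and therefore
$$
w_n(z^0)\leq \mu_n(\ovl{W_n})\cdot \sup_{\xi\in \ovl{W_n}} K^-(z^0,\xi)\lesssim 2^{-n}.
$$
The series thus converges locally uniformly on $O$, so $v$ is lower semicontinuous, $L^-$-superharmonic on $\br^3$, and finite at every point of $O$. On the other hand, each $\xi\in S$ lies in every $\ovl{W_n}$ with $w_n(\xi)=1$, except possibly on a polar exceptional set $N$; after enlarging $S$ by $N\cap\partial O$ (still polar, and the boundary hypothesis on $u$ is unaffected because $\partial O\setminus(S\cup N)\subset\partial O\setminus S$), we obtain $v(\xi)=+\infty$ for every $\xi\in S$, and hence $\liminf_{x\to\xi}v(x)=+\infty$ by lower semicontinuity.

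With this barrier, for any $\eps>0$ the function $u_\eps=u+\eps v$ is $L^-$-superharmonic on $O$ and satisfies $\liminf_{x\to\xi}u_\eps(x)\geq 0$ at every $\xi\in\partial O$: the case $\xi\in\partial O\setminus S$ follows from the hypothesis on $u$ together with $v\geq 0$, while the case $\xi\in S$ follows from the boundedness of $u$ combined with the divergence of $v$. The classical minimum principle for $L^-$-superharmonic functions on the bounded domain $O$ therefore gives $u_\eps\geq 0$ on $O$; letting $\eps\to 0^+$ and using $v(x)<\infty$ for every $x\in O$ yields $u\geq 0$ on $O$, completing the argument.

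The main obstacle is the construction of the exhausting potential $v$ — an $L^-$-analogue of the classical Evans--Selberg construction for polar sets — which rests on the maximum principle for $K^-$-potentials and the existence of the equilibrium measure $p^+_{\ovl{W_n}}$, both supplied by Theorem~\ref{th_trii}. The remaining ingredients, namely the reduction to $M=0$ via a constant shift, the strong minimum principle, and the ordinary minimum principle for superharmonic functions on a bounded domain, are routine for the uniformly elliptic operator $L^-$.
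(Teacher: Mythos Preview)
Your proof follows the same architecture as the paper's: reduce to $M=0$, manufacture an $L^-$-superharmonic barrier $v$ that is $+\infty$ on $S$ but finite on $O$, apply the ordinary minimum principle to $u+\eps v$, and conclude with the strong minimum principle. The difference is in how the barrier is built. The paper (Lemma~\ref{l101}) simply borrows the classical Newtonian Evans measure $\nu$ supplied by \cite[Theorem~5.11]{hk} and replaces the kernel $|x-y|^{-1}$ by $K^-(x,y)$; since $K^-(x,y)=C|x-y|^{-1}+\tilde K(x,y)$ with $\tilde K$ continuous, the blow-up on $S$ and the finiteness at $x^0$ transfer for free, and one is done in two lines. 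Your route is more intrinsic to the $L^-$-theory --- summing equilibrium potentials $U^-_{p^+_{\overline{W_n}}}$ of shrinking neighborhoods --- but it costs two extra steps that need tightening.

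First, your appeal to (\ref{lim2}) for the existence of open $W_n\supset S$ with $C(\overline{W_n})<2^{-n}$ is not quite right: (\ref{lim2}) is stated for decreasing \emph{compacts}, whereas $S$ is merely a bounded polar set and need not be closed. The statement you want is true, but the clean way to get it is to take $W_n=\{\psi>n\}$ for the Newtonian Evans potential $\psi=\int|x-y|^{-1}d\nu(y)$; on $\overline{W_n}$ one has $\psi\geq n$, and duality against the equilibrium measure gives $C_W(\overline{W_n})\leq \nu(\br^3)/n$, hence $C(\overline{W_n})\to 0$ by Proposition~\ref{p10}. In other words, even your construction ultimately leans on the classical Evans measure, which is why the paper's shortcut is more economical. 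Second, the line ``since $S\cap O=\emptyset$ one has $\mathrm{dist}(z^0,\overline{W_n})\geq\delta>0$ for all large $n$'' does not follow from what you have written: nothing forces your $W_n$ to shrink. You should add the harmless requirement $W_n\subset\{x:\mathrm{dist}(x,S)<1/n\}$ (intersecting only decreases capacity), after which the distance bound and hence $w_n(z^0)\lesssim 2^{-n}$ are justified. With these two fixes your argument is complete; note also that since $S\subset W_n\subset\mathrm{int}(\overline{W_n})$, Theorem~\ref{th_trii} gives $w_n\equiv 1$ on all of $S$, so the detour through an exceptional polar set $N$ is unnecessary.
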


The proof of this theorem is identical to the proof of
 Theorem 5.16 of \cite[Ch. 5]{hk} as long as one has the following

\begin{lemma}\label{l101} Let $S$ be a bounded polar set. Then there is a nonnegative
 $L^-$-superharmonic function $u$ on $\mathbb{R}^3$ such that
$u(x)=+\infty$ for any $x\in S$ and $u(x^0)$ is finite at a
prescribed point $x^0\notin S$.
\end{lemma}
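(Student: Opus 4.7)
My plan is to reduce the claim directly to the classical Evans theorem by the conjugation
\[
u(z)=e^{-z_1}v(z).
\]
A short computation using $L^-=\tfrac12\Delta+\partial_{z_1}$ and the Leibniz rule gives
\[
L^-\!\bigl(e^{-z_1}v(z)\bigr)=-\tfrac{1}{2}e^{-z_1}(-\Delta+1)v,
\]
so $u$ is $L^-$-superharmonic precisely when $v$ is $(-\Delta+1)$-superharmonic. It therefore suffices to produce a nonnegative $(-\Delta+1)$-superharmonic $v$ on $\br^3$ with $v=+\infty$ on $S$ and $v(x^0)<+\infty$.

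For this I would use the classical Evans construction with the kernel $G_0(z,\xi)=(4\pi)^{-1}e^{-|z-\xi|}/|z-\xi|$. The paper notes just after Definition~\ref{d1} that the polar sets for $C(\cdot)$ coincide with the standard elliptic polar sets; since moreover the singular part of $G_0$ at the diagonal agrees with the Newtonian kernel up to a smooth factor, the two potential theories share the same polar and capacitable sets. By outer regularity of the associated capacity, I would choose a decreasing sequence of bounded open $V_n\supset S$ with $(-\Delta+1)$-capacity less than $2^{-n}$ and let $\nu_n$ be the corresponding equilibrium measure on $\ovl{V_n}$, so that $\|\nu_n\|\le 2^{-n}$, the potential $U_{\nu_n}(z):=\int G_0(z,\xi)\,d\nu_n(\xi)$ satisfies $U_{\nu_n}\le 1$ on $\br^3$, and $U_{\nu_n}=1$ on the interior of $\ovl{V_n}$. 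The Evans-type sum $v:=\sum_n U_{\nu_n}$ is then nonnegative and $(-\Delta+1)$-superharmonic (a monotone limit of superharmonic functions) with $v=+\infty$ on $S$, and hence $u:=e^{-z_1}v$ has all the required properties except possibly the finiteness at $x^0$.

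The main obstacle is precisely this finiteness $v(x^0)<+\infty$. In the generic case $x^0\notin\ovl S$ one arranges $\dist(x^0,\ovl{V_n})\ge\delta>0$ for $n$ large, so that $G_0(x^0,\cdot)$ is bounded on $\ovl{V_n}$ and $U_{\nu_n}(x^0)\lesssim 2^{-n}$, making the series converge; the first finitely many terms are finite because $\nu_n$ assigns no mass to the polar singleton $\{x^0\}$. The harder sub-case is $x^0\in\ovl S\setminus S$, when the $V_n$ cluster at $x^0$; here I would refine the construction using capacitability, replacing the naive open cover by compact supports $K_n\subset V_n$ that avoid balls $B(x^0,\rho_n)$ with $\rho_n\to 0$ slowly enough that $\sum U_{\nu_n}(x^0)$ still converges. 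This is the classical elliptic argument of \cite[Ch.~5]{hk} and it carries over verbatim because the singular behaviour of $G_0$ near its diagonal matches that of the Newtonian kernel.
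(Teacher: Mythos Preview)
Your conjugation $u=e^{-z_1}v$ and the reduction to $(-\Delta+1)$-superharmonicity are correct, and the Evans-type construction you outline does produce the desired function.  The route, however, is different from the paper's and noticeably longer.  The paper does not rebuild Evans' theorem for the Yukawa kernel at all: it simply quotes the classical Newtonian version (\cite[Theorem~5.11]{hk}) to obtain a compactly supported measure $\nu$ with $\psi(x)=\int |x-y|^{-1}\,d\nu(y)=+\infty$ on $S$ and $\psi(x^0)<\infty$, and then forms $u(x)=\int K^-(x,y)\,d\nu(y)$ with the \emph{same} $\nu$.  The key observation is that $K^-(x,y)=C|x-y|^{-1}+\tilde K(x,y)$ with $\tilde K$ continuous, so $u=C\psi+\int \tilde K(x,\cdot)\,d\nu$ differs from $C\psi$ by a continuous function and inherits its singular set and its finiteness at $x^0$; and $u$ is $L^-$-superharmonic because $-L^-K^-(\cdot,y)=\delta_y$.

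What this buys over your argument is that the entire equilibrium-measure construction and the awkward case $x^0\in\overline S\setminus S$ are absorbed into the cited classical result; no separate convergence analysis at $x^0$ is needed.  Your conjugation identity is in fact the same relation that underlies $K^-(z,\xi)=2e^{-z_1}G_0(z,\xi)e^{\xi_1}$, so once one sees that $K^-$ and the Newtonian kernel differ by a continuous kernel on compacts, re-deriving Evans for $G_0$ is unnecessary.
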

\begin{proof}
By \cite{hk}, Theorem 5.11,  and the Riesz representation theorem for
superharmonic functions, there exists a compactly supported measure $\nu$ on
$\mathbb{R}^3$  such that for the potential
$\psi(x)=\int |x-y|^{-1}d\nu(y)$ equals $+\infty$ on $S$
and is finite at $x^0$. Consider the function
\[
u(x)=\int K^-(x,y)d\nu(y)=C\psi(x)+\int \tilde{K}(x,y)d\nu(y)
\]
where $\tilde{K}(. , . )$ is a continuous kernel. Clearly, $u$ has the properties claimed in the lemma.
\end{proof}

\begin{theorem} The measure $p^+$ is the unique maximizer in (\ref{second}).
\end{theorem}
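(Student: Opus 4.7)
My plan is to show that any other maximizer $\nu$ of \eqref{second} satisfies $U^-_\nu = U^-_{p^+}$ quasi-everywhere, and then deduce $\nu = p^+$ from the injectivity of the potential map (applying $-L^-$ in the distributional sense recovers $\nu$ from $U^-_\nu$).

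The first ingredient is a Fubini identity based on the reflection \eqref{refl}. For any admissible $\nu$ with $\nu(E) = C(E)$, one has
\[
\int_\Gamma U^-_\nu \, dp^- \;=\; \int_E U^+_{p^-}\, d\nu \;=\; \nu(E) \;=\; C(E) \;=\; p^-(E),
\]
using the dual of Theorem~\ref{th_trii}(ii) applied to $L^+$ (i.e.\ $U^+_{p^-} = 1$ quasi-everywhere on $E$) together with the standard fact that admissible measures (those with bounded potential) assign zero mass to polar sets. Combined with $U^-_\nu \le 1$ globally, this forces $U^-_\nu = 1$ $p^-$-almost everywhere on $\Gamma$, and an identical argument gives the same for $U^-_{p^+}$.

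Next I compare $\nu$ and $p^+$ via the extended minimum principle. Set $h := U^-_{p^+} - U^-_\nu$. On $\Omega$, $h$ is $L^-$-harmonic since both defining measures are supported in $E$. The asymptotics \eqref{second1} give $2\pi r\, U^-_\mu(-re^1) \to \mu(E) = C(E)$ for $\mu = p^+$ and $\mu = \nu$ and faster decay in every other direction, so $|h| = o(1/|z|)$ at infinity. At the boundary, $U^-_{p^+}(z) \to 1$ at quasi-every regular $\xi \in \Gamma$ (Theorem~\ref{th_trii}(ii)) while $U^-_\nu \le 1$ globally, hence $\liminf_{z \to \xi} h(z) \ge 0$ quasi-everywhere on $\Gamma$. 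Theorem~\ref{gmp} then gives $h \ge 0$ on $\Omega$; combined with $U^-_{p^+} = 1 \ge U^-_\nu$ quasi-everywhere on $E$, we obtain $U^-_{p^+} \ge U^-_\nu$ quasi-everywhere on $\br^3$, and the first-step identity $\int h\,dp^- = 0$ together with $h \ge 0$ gives $h = 0$ $p^-$-almost everywhere on $\Gamma$.

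To upgrade $p^-$-a.e.\ equality to quasi-everywhere equality I would show that the closed (by lower semicontinuity) set $F_\epsilon = \{\xi \in \Gamma : U^-_\nu(\xi) \le 1 - \epsilon\}$ has capacity zero for every $\epsilon > 0$. If not, letting $\sigma$ be the $L^-$-equilibrium of $F_\epsilon$, a Frostman-type swap $\tilde\nu = \nu - t\mu_1 + t\sigma$ — where $\mu_1$ is a balayage of a portion of $\sigma$ onto the tight set $\{U^-_\nu = 1\}$, chosen so that the two contributions to $U^-_{\tilde\nu}$ cancel outside $F_\epsilon$ — produces an admissible $\tilde\nu$ with strictly larger mass, contradicting maximality. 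Once $\{U^-_\nu < 1\} \cap \Gamma$ is known to be polar, the symmetric argument (applying Theorem~\ref{gmp} to $-h$, whose $\limsup$ at q.e.\ boundary points is now $\le 0$) yields $h \equiv 0$ on $\Omega$; the $L^-$-minimum principle inside $E^\circ$, where $U^-_\nu$ is $L^-$-superharmonic with boundary value $1$ quasi-everywhere and is bounded by $1$, extends this to $U^-_\nu = 1 = U^-_{p^+}$ on $E^\circ$, hence $U^-_{p^+} = U^-_\nu$ quasi-everywhere. Injectivity of the potential map then yields $\nu = p^+$.

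The hard part is the Frostman-type perturbation in the third paragraph: because the kernel $K^-$ is not symmetric, verifying that the swap $\nu \mapsto \tilde\nu$ actually enlarges mass while keeping $U^-_{\tilde\nu} \le 1$ everywhere requires a careful balayage estimate onto the level set $\{U^-_\nu = 1\}$. The remaining steps reduce to routine applications of Fubini, the extended minimum principle, and the uniqueness of the $L^-$-Dirichlet solution.
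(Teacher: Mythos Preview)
Your third paragraph is the genuine gap.  You state yourself that the Frostman-type swap ``requires a careful balayage estimate'' for the non-symmetric kernel $K^-$, and you do not actually carry it out.  Without that step you only know $U^-_\nu = 1$ holds $p^-$-a.e.\ on $\Gamma$, which is too weak to run the extended minimum principle on $-h$; the entire conclusion $h\equiv 0$ hinges on the unproved claim that $\{U^-_\nu<1\}\cap\Gamma$ is polar.  (A minor secondary issue: Theorem~\ref{gmp} is stated for bounded domains, so applying it directly on $\Omega=E^c$ needs a truncation-and-limit argument that you skip.)

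The paper sidesteps this difficulty entirely by working with smooth approximants $G_n\searrow E$.  On $G_n$ the equilibrium potential $U^-_{p_n^+}$ equals $1$ \emph{exactly} on $\partial G_n$, so $h_n := U^-_{p_n^+}-U^-_{\tilde p}\ge 0$ on $G_n^c$ by the ordinary maximum principle --- no polar exceptional set to worry about.  The Fubini/duality identity then gives $\int h_n\,dp_n^- = C(G_n)-C(E)\to 0$; since $p_n^-$ is the sweep of planar Lebesgue measure on $\{x_1=0\}$, this reads $\int_{\{x_1=0\}} h_n\,dx'\to 0$, and Harnack forces $h_n\to 0$ on compacts in $\Omega$.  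Passing to the limit yields $U^-_{\tilde p}=U^-_{p^+}$ on $\Omega$ directly, and the case of nonempty interior is handled afterwards with Theorem~\ref{gmp} on each component.  Finally the measures are recovered not by an abstract ``injectivity of the potential map'' but by the concrete observation that $e^{x_1}U^-_\mu$ has Fourier transform $(|w|^2+1)^{-1}\mathcal{F}(e^{x_1}\mu)$, so equality of potentials a.e.\ gives equality of measures.  The approximation device is what lets the paper avoid the non-symmetric balayage argument you are stuck on.
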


\begin{proof}
Assume first that $E$ does not have interior points. Take again
$G_n$ as in the proof of Theorem \ref{th_trii}. Let $\ti p$ be
another maximizer in (\ref{second}). From Theorem \ref{th_trii}, we
have
\[
\int dp_n^-(x)\int K^-(x,y)d\ti p(y)=\int d\ti p(y)\int
K^+(y,x)dp_n^-(x)=C(E)=C(G_n)-\epsilon_n
\]
where
\[
\epsilon_n=C(G_n)-C(E)\to 0
\]
The function
\[
h_n=U_{p_n^+}^- -U_{\ti p}^-
\]
is $L^-$-harmonic on $G_n^c$ and nonnegative there by the maximum
principle since $U_{\ti p}^-(x)\leq 1, \ U_{p_n^+}^-(x)=1$ on
$\Gamma_n=\partial G_n$ and they both decay at infinity. Moreover,
\[
\int h_n(x) dp_n^-(x)=\epsilon_n\to 0
\]
Therefore,
\[
\int_{\{x_1=0\}} h_n(x)dx'=\epsilon_n\to 0
\]
and by Harnack principle, $h_n(x)\to 0$ on any compact in $\Omega$.
Since $U^-_{p_n^+}(x)\to U^-_{p^+}(x)$ on $\Omega$, we have
$U^-_{\ti p}(x)=U^-_{p^+}(x)$ on $\Omega$. This means
that $U^-_{\ti p}(x)=U^-_{p^+}(x)=1$ quasi-everywhere on $E$ since the
potential $U^-_{\ti p}$ is quasicontinuous on $\mathbb{R}^3$ \cite[p. 73]{mizuta}. Then, $U^-_{\ti p}=U^-_{p^+}$ a.e. on $\br^3$. Taking Fourier transform,  we get
\[
(w^2+1)^{-1}\cal{F}(e^{x_1}\ti p)=(w^2+1)^{-1}\cal{F}(e^{x_1}p^+)
\]
and thus the measures are equal.

Now, consider the general case when $E$ has nonempty interior.
Consider any open component in the interior and call it $O$. The
boundary $\Gamma'=\partial O$ is a subset of $\Gamma$. In a similar
way, we can prove that  the potential $u=U^-_{\ti p}$ is continuous
and equals to $1$ quasi-everywhere on $\Gamma'$. We also know that
$u$ is $L^-$-superharmonic on $O$ and $0\leq u(x)\leq 1$. Let us apply
now the Theorem~\ref{gmp} to $O$ and $u(x)$ with $M=1$ and $S$ taken
as a set of points of $\Gamma'$ at which $U^-_{\tilde{p}}$ is not
continuous or its value $\neq 1$. Then, $U^-_{\tilde{p}}=1$ on $O$
and thus $\ti p=p^+$ by the same argument.
\end{proof}
In view of these results, we can call $p^{\pm}$ the equilibrium
measures.\bigskip

Let $\bi=(i_2,i_3)\in \bz^2$.
For a given $\bi$, $z\in\br^3$ and $T>0$, put $\Pi'_\bi=[i_2T, (i_2+1)T]\times[i_3T,(i_3+1)T]$ if $i_{2(3)}\geq 0$.
The following proposition provides another useful relation between the capacity
and harmonic measure.

\begin{proposition}\label{trii}
Assume that $T$ is large and $E\subset [T^2, 2T^2]\times \Pi'_\bi$. Then
\[
\omega^-(0,E,\Omega)\sim \frac{C(E)}{T^2}
\]
for small $|\bi|$ and
\[
\omega^-(0,E,\Omega)\lesssim \exp\left( -\frac{|\bi|^2}{2+\sqrt{4+|\bi|^2T^{-2}}}\right) \frac{C(E)}{T^2}
\]
in general.
\end{proposition}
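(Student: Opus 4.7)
The plan is to write $\omega^-(0,E,\Omega)$ as the potential of the equilibrium measure and then do pointwise estimates on the kernel $K^-(0,\xi)$ over the box $[T^2,2T^2]\times\Pi'_\bi$. The key point is that by Theorem \ref{th_trii} the function $U^-_{p^+}$ is the generalized $L^-$-solution to the Dirichlet problem with boundary value $1$ on $\Gamma=\partial E$. Interpreting this probabilistically, $U^-_{p^+}(0)$ is the probability that the drift-plus-Brownian path $G_t$ starting at the origin ever hits $\Gamma$, and therefore
\[
\omega^-(0,E,\Omega)=U^-_{p^+}(0)=\int_E K^-(0,\xi)\,dp^+(\xi),\qquad p^+(E)=C(E).
\]
So the task reduces to bounding $K^-(0,\xi)$ pointwise on the box and then integrating.

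Writing $K^-(0,\xi)=(2\pi|\xi|)^{-1}\exp(\xi_1-|\xi|)$ and using the identity $\xi_1-|\xi|=-|\xi'|^2/(\xi_1+|\xi|)$, I would first observe that both partial derivatives $\partial_{\xi_1}(\xi_1-|\xi|)>0$ and $\partial_{|\xi'|^2}(\xi_1-|\xi|)<0$, so on $[T^2,2T^2]\times\Pi'_\bi$ the quantity $\xi_1-|\xi|$ is maximized at the corner $\xi_1=2T^2$, $|\xi'|^2=|\bi|^2T^2$, giving
\[
\xi_1-|\xi|\leq-\frac{|\bi|^2T^2}{2T^2+T^2\sqrt{4+|\bi|^2T^{-2}}}=-\frac{|\bi|^2}{2+\sqrt{4+|\bi|^2T^{-2}}}.
\]
Combined with the trivial bound $|\xi|^{-1}\leq T^{-2}$ coming from $|\xi|\geq\xi_1\geq T^2$, this yields
\[
K^-(0,\xi)\lesssim\frac{1}{T^2}\exp\!\Bigl(-\frac{|\bi|^2}{2+\sqrt{4+|\bi|^2T^{-2}}}\Bigr),
\]
and integrating against $dp^+$ gives the second inequality in the proposition.

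For the first (two-sided) statement, with $|\bi|$ bounded by some absolute constant, $|\xi'|\lesssim T$, hence $|\xi|\leq\xi_1+|\xi'|\lesssim T^2$ and the exponent $|\xi'|^2/(\xi_1+|\xi|)$ is uniformly bounded. Therefore on the entire box one gets the matching two-sided bound $K^-(0,\xi)\sim T^{-2}$, and integrating against $dp^+$ yields $\omega^-(0,E,\Omega)\sim C(E)/T^2$.

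Essentially no step is a real obstacle: the identification $\omega^-(0,E,\Omega)=U^-_{p^+}(0)$ is the content of Theorem \ref{th_trii}, and after that everything is the elementary calculus of $K^-$ on a rectangular box. The only thing to be slightly careful about is that the upper bounds on $|\xi|^{-1}$ and on $\exp(\xi_1-|\xi|)$ are achieved at different points of the box, but since we just want an upper estimate for $K^-$ we may freely take the product of the two separate pointwise maxima.
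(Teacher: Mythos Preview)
Your proof is correct and follows essentially the same approach as the paper's own proof: identify $\omega^-(0,E,\Omega)$ with $U^-_{p^+}(0)=\int K^-(0,\xi)\,dp^+(\xi)$ and then estimate $K^-(0,\xi)$ pointwise on the box. The paper is terser (it just says ``a simple computation yields'' the general bound and assumes smooth $\partial E$ to invoke the identification directly), whereas you spell out the kernel estimate via the identity $\xi_1-|\xi|=-|\xi'|^2/(\xi_1+|\xi|)$ and the monotonicity in $\xi_1$ and $|\xi'|^2$; both routes are the same in substance.
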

\begin{proof}
We can always assume that $\partial E$ is smooth. Then, consider
\[
U^-(z)=\int K^-(z,\xi)dp^+(\xi)
\]
We know that $L^- U^-=0$ on $\Omega$ and $U^-=1$ on $\partial E$. Therefore,
\[
\omega^-(0,E,\Omega)=U^-(0)\sim \frac {p^+(E)}{T^2}=\frac{C(E)}{T^2}
\]
for $|\bi| < i_0$ and a simple computation yields
\begin{equation}
\omega^-(0,E,\Omega)\lesssim \exp\left( -\frac{|\bi|^2}{2+\sqrt{4+|\bi|^2T^{-2}}}\right)
\frac{C(E)}{T^2}\label{e6}
\end{equation}
\end{proof}
As the law of  iterated logarithm suggests, the interesting range
for $|\bi|$ is $|\bi|\leq C\sqrt{T\log\log T}$ with some $C$. Then,
in (\ref{e6}), the weight is dominated by $\exp(-(\frac 14
-\epsilon)|\bi|^2)$, and $\epsilon>0$ is arbitrary.\bigskip

In some cases, the capacity is ``almost additive".
\begin{proposition}\label{addup}
Let $E=\bigcup_j E_j$ such that  disjoint sets $E_j\subset
[0,T^2]\times \Pi'_{\bi_j}, \bi_j\in\bii$, for some $T>1$, and
$\bii$ be finite. Then
\[
C(E)\gtrsim \sum_j C(E_j)
\]
\end{proposition}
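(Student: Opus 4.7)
The plan is to construct an admissible measure on $E$ whose total mass is comparable to $\sum_j C(E_j)$ and then apply the dual formula~(\ref{second}). Let $p^+_j$ be the equilibrium measure on $E_j$ given by Theorem~\ref{th_trii}, so that $p^+_j(E_j)=C(E_j)$ and
\[
U^-_{p^+_j}(z)=\omega^-(z,E_j,E_j^c)=\mathbb{P}_z\bigl(G_t\text{ hits }E_j\bigr),\qquad G_t=z+te^1+B_t.
\]
Set $\nu=\sum_j p^+_j$; it is supported on $E$ with $\nu(E)=\sum_j C(E_j)$. If one shows $\sup_{z\in\br^3} U^-_\nu(z)\leq M$ for an absolute constant $M$, then $\nu/M\in\ca^-(E)$ and (\ref{second}) yields $C(E)\geq M^{-1}\nu(E)$, which is the claim.

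Since $E_j\subset B_j:=[0,T^2]\times\Pi'_{\bi_j}$, it is enough to bound $\sum_j \mathbb{P}_z(G_t\text{ visits }B_j)$ uniformly in $z$. I introduce the enlargement
\[
\tilde B_j=[0,2T^2]\times\bigl(T\bi_j+[-T,2T]^2\bigr)
\]
and prove the key lemma $\inf_{y\in B_j}\mathbb{E}_y[\text{occupation time of }\tilde B_j]\geq cT^2$ with absolute $c>0$. Because the $x_1$- and lateral components of $G_t$ are independent,
\[
\mathbb{E}_y[\text{occupation of }\tilde B_j]=\int_0^\infty \mathbb{P}\bigl(y_1{+}t{+}B_t^{(1)}\in[0,2T^2]\bigr)\,\mathbb{P}\bigl(y'{+}B'_t\in T\bi_j{+}[-T,2T]^2\bigr)\,dt.
\]
For $t\in[0,T^2]$ each factor is bounded below by an absolute positive constant, uniformly in $y_1\in[0,T^2]$ and in $y'\in\Pi'_{\bi_j}$, by elementary Gaussian tail estimates on $B_t^{(1)}$ and $B'_t$; for the lateral factor one uses that $y'$ lies at distance $\geq T$ from $\partial(T\bi_j+[-T,2T]^2)$. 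Integrating over $t\in[0,T^2]$ gives the lemma.

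By the strong Markov property at the first visit of $B_j$,
\[
\mathbb{E}_z[\text{occupation of }\tilde B_j]\geq cT^2\cdot\mathbb{P}_z(G_t\text{ visits }B_j).
\]
Summing over $j$ and observing that the lateral boxes $T\bi_j+[-T,2T]^2$ form a bounded-overlap cover of $\br^2$ (each point lies in at most $C_0$ of them, since they are $3T$-cubes on the lattice $T\bz^2$), while all $\tilde B_j$ share the same $x_1$-range $[0,2T^2]$, one obtains
\[
\sum_j\mathbb{E}_z[\text{occupation of }\tilde B_j]\leq C_0\,\mathbb{E}_z\!\int_0^\infty\chi_{\{G^{(1)}_t\in[0,2T^2]\}}\,dt\leq 3C_0T^2,
\]
where the last inequality comes from the explicit Green's function $G_{1D}(x,y)=1$ for $y>x$ and $G_{1D}(x,y)=e^{2(y-x)}$ for $y<x$ of the $+1$-drifted one-dimensional Brownian motion. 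Hence $\sum_j\mathbb{P}_z(G_t\text{ visits }B_j)\leq 3C_0/c$, independent of $T$, $z$ and $|\bii|$, and the required uniform bound on $U^-_\nu$ follows. The main obstacle is the occupation-time lemma: the padding sizes $T^2$ in $x_1$ and $T$ laterally are tuned so that both one-dimensional projections of $G_t$ remain inside their respective intervals over a time window of length $\sim T^2$ with positive probability, regardless of where $y$ sits inside $B_j$.
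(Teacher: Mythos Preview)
Your proof is correct, but it takes a genuinely different route from the paper.  Both arguments begin identically: form $\nu=\sum_j p^+_j$ (the paper writes $\nu^-_j$) and reduce the claim to a uniform bound $\sup_{z}U^-_{\nu}(z)\le M$.  The paper then argues analytically: for $z\in E_k$ and $\xi\in E_j$ it compares $K^-(z,\xi)$ with $K^-(z_{\bi_j},\xi)$, where $z_{\bi_j}=z+(\bi_j-\bi_k)T$ is the lateral translate of $z$ into the box containing $\xi$; the exponential decay of $K^-$ in $|z-\xi|$ yields $U^-_{\nu^-_j}(z)\le e^{-\alpha|\bi_j-\bi_k|}$ and the lattice sum converges.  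You instead exploit the identification $U^-_{p^+_j}(z)=\mathbb{P}_z(G_t\text{ hits }E_j)$ and run an occupation-time argument: a strong-Markov lower bound on the occupation of the padded box $\tilde B_j$ after hitting $B_j$, combined with the bounded overlap of the $\tilde B_j$ and the explicit $1$D Green's function for the drifted coordinate.  Your approach never touches the explicit form of $K^-$ and produces a bound valid for every $z\in\br^3$ at once (the paper handles only $z\in E$ and implicitly appeals to the maximum principle outside).  The paper's kernel comparison is shorter and entirely elementary; your argument is more robust and makes the parabolic scaling $T^2$-in-time versus $T$-in-space completely transparent.
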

\begin{proof}
Let $\nu^-_j$ be a maximizer for $E_j$. Take $\nu=\sum \nu^-_j$.
Then,
\[
\nu(E)=\sum C(E_j)
\]
On the other hand, we can estimate the potential in the following
way. Take any point $z$ in, say, $E_1$. Then
\[
U^-_\nu(z)=\sum_j U^-_{\nu^-_j}(z)
\]
For $|\bi_j-\bi_1|\leq 2$ we use estimates $U^-_{\nu^-_j}(z)\leq 1$.
For any $\bi_j, |\bi_j-\bi_1|\geq 3$ and $z_{\bi_j}=z+(\bi_j-\bi_1)
T$, we have
\[
K(z,\xi)\leq K(z_{\bi_j},\xi) e^{-\alpha |\bi_j-\bi_1|},
\]
for any $\xi\in E_j$ and some universal $\alpha>0$. Therefore,
\[
U^-_{\nu^-_j}(z)\leq e^{-\alpha {|\bi_j-\bi_1|}}
\]
and so
\[
U^-_{\nu}(z)\leq C
\]
This implies the required bound.
\end{proof}

\subsection{Anisotropic Hausdorff content and capacity}\label{ss33}
Since we have to consider both microscopic and macroscopic regimes,
one has to define the anisotropic Hausdorff content in the following
way \cite[App. D]{GM}.

Let $h: \br_+\to \br_+$ be some measure function, i.e. $ h(0)=0$,
$h$ is continuous and increasing. Set $\Pi'_r=[0,r]\times[0,r]$ for
$r\ge0$. We say that $H(r)$ is the characteristic set if it can be
translated to a parallelepiped $[0,r^2]\times \Pi'_r$ if $r\ge 1$
and to a cube $[0,r]\times \Pi'_r$ if $r<1$. We say that the size of
$H(r)$ is $r$. Cover $E$ by a countable number of $\{H(r_j)\}$. Then
the Hausdorff content of $E$ (w.r.to $h$) is
\begin{equation}\label{e9502}
M_h(E)=\inf \sum_j h(r_j)
\end{equation}
where the infimum is taken over all coverings.

Take
\begin{equation}
h(r)=\left\{
\begin{array}{cc}
r,& 0<r\leq 1\\
r^2,& r>1
\end{array}\right.\label{mes}
\end{equation}
Since the capacity is subadditive, one has a simple

\begin{proposition}\label{content}
For any compact set $E$ and the measure function $h$ given by
(\ref{mes}), we have
\[
C(E)\lesssim M_h(E)
\]
\end{proposition}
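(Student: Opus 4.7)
The plan is to reduce the bound on $C(E)$ to a uniform estimate of the capacity of a single characteristic set by using the subadditivity and monotonicity of $C$, and then to separately handle the two scales $r>1$ and $r\leq 1$ using the results already established.

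More precisely, I would fix any countable covering $E\subseteq \bigcup_j H(r_j)$ and split it into a disjoint union by setting $E_j=(E\cap H(r_j))\setminus \bigcup_{k<j}(E\cap H(r_k))$. Since $E_j\subseteq H(r_j)$, properties 1 and 2 of Subsection~\ref{ss31} (monotonicity and subadditivity) give
\[
C(E)\leq \sum_j C(E_j)\leq \sum_j C(H(r_j)).
\]
So it suffices to show that $C(H(r))\lesssim h(r)$ uniformly in $r>0$. Because the capacity is translation-invariant, $H(r)$ has the same capacity as the model characteristic set of size $r$.

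The two scales are handled separately. For the macroscopic scale $r>1$, the model set is exactly $T_r=[0,r^2]\times \Pi'_r$ from item 3 of Subsection~\ref{ss31}, and that item yields $C(H(r))\sim r^2=h(r)$. For the microscopic scale $r\leq 1$, the model set is a cube of side $r$, which has $\mathrm{diam}\lesssim 1$, so Proposition~\ref{p10} applies and $C(H(r))\sim C_W(H(r))\sim r=h(r)$ (the Wiener capacity of a cube of side $r$ in $\mathbb{R}^3$ is comparable to $r$, just as for the ball $B(0,r)$). Combining the two regimes, summing over $j$, and taking the infimum over all coverings as in \eqref{e9502}, we conclude $C(E)\lesssim M_h(E)$.

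There is no serious obstacle: the only point worth checking is that the implicit constants in the microscopic and macroscopic estimates are both universal, so they combine into a single constant valid for all $r>0$. The definition of $h$ is designed exactly so that the two capacity asymptotics $C_W\sim r$ and $C(T_r)\sim r^2$ are glued continuously at $r=1$, so no special care is needed at the transition.
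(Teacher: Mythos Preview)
Your proof is correct and is exactly the argument the paper has in mind: the paper does not spell out a proof but simply prefaces the proposition with ``Since the capacity is subadditive, one has a simple\ldots'', and your covering-plus-subadditivity reduction to the single-box estimates of item~3 and Proposition~\ref{p10} is precisely that. One cosmetic remark: your disjointified pieces $E_j$ need not be compact, so $C(E_j)$ is not literally defined; but this is harmless, since you can restrict the equilibrium measure $\nu^-$ of $E$ to the Borel sets $E\cap H(r_j)\setminus\bigcup_{k<j}H(r_k)$ and observe that each restriction is supported in the compact $H(r_j)$, giving $C(E)\le\sum_j C(H(r_j))$ directly.
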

One can compare this to \cite[App. D]{GM}, Theorem D.1. This is a
key estimate which will allow us to guarantee the wave propagation
(see Section~\ref{s5}) in terms of the metric properties of the set
of obstacles $E$. It seems possible to relate the capacity to the
Hausdorff dimension or other metric properties of the set similarly
to the elliptic/parabolic case (see, e.g. an excellent paper by
Taylor-Watson \cite{tw}). We do not pursue this here.

\medskip\nt
{\bf Remark.} If $d=2$, we need to introduce the measure function
differently, i.e.
\begin{equation}
h(r)=\left\{
\begin{array}{cc}
|\log r|^{-1},& 0<r\leq 1/2\\
Cr,& r>1/2
\end{array}\right., \quad C=\frac{2}{\log 2}
\end{equation}
The proposition above and the results below then hold true.

\subsection{How does the capacity change under projection?}\label{s34}
It is a well known fact in classical  potential theory  that the capacity
(Wiener or logarithmic) can only decrease when the set is contracted.
Analogous statements for harmonic measure are known as Beurling's projection theorem and
Hall's lemma, see \cite[Sect. 3.9]{GM}. We are interested in proving similar results for
modified capacity. Naturally, the results will be anisotropic.

If diam$(E)$ is small then all results for the Wiener's capacity
hold. It is the macroscopic regime that we are more interested in.
Let $\ppr'$ be the projection on the plane $\{x_1=0\}$ and $\aleph$
be any contraction in the $x'$-plane (i.e., the plane $OX_2X_3$).
That is, under the action of $\aleph$, the $x_1$ coordinates of
points in $E$ stay the same, but the $x'$-sections of $E$ are
contracted. This makes the capacity of the set smaller, compare this
to \cite[Ch. 3]{GM}, Theorem~4.5.
\begin{theorem}\label{projx}
For any set $E$, we have
\begin{equation}
C({\aleph} E)\leq C(E)  \label{prx}
\end{equation}
\end{theorem}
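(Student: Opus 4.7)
\emph{Proof plan for Theorem \ref{projx}.}
The plan is to deduce the inequality from the dual characterization $C^-(E) = \sup_{\nu \in \ca^-(E)} \nu(E)$ established in \eqref{second}. Given an arbitrary admissible $\mu \in \ca^-(\aleph E)$, I want to produce an admissible $\nu \in \ca^-(E)$ of the same total mass $\mu(\aleph E)$; taking the supremum over $\mu$ then yields $C^-(E) \geq C^-(\aleph E)$, and Theorem~\ref{th_trii} promotes this to the claim for $C(E)$. By \eqref{lim2} I first reduce to the case of $E$ compact with piecewise smooth boundary, so that $\aleph E$ is compact as well.

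Next, because $\aleph$ is continuous and the set-valued map $\aleph^{-1}|_{\aleph E}$ has closed values in the compact set $E$, the Kuratowski--Ryll-Nardzewski selection theorem provides a Borel measurable right inverse $\sigma : \aleph E \to E$ with $\aleph \circ \sigma = \mathrm{id}_{\aleph E}$ (when $\aleph$ is injective on $E$ this is simply $\sigma = \aleph^{-1}$). Setting $\nu = \sigma_{*}\mu$, I get a Borel measure supported on $E$ with $\aleph_{*}\nu = \mu$ and $\nu(E) = \mu(\aleph E)$.

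The heart of the proof is the pointwise kernel estimate
\begin{equation*}
K^{-}(z,\zeta) \;\leq\; K^{-}(\aleph z,\, \aleph \zeta)\qquad \text{for every } z \in \mathbb{R}^3,\ \zeta \in E,
\end{equation*}
which combines two features of $\aleph$ (extended by the identity on the $x_1$-axis). First, $\aleph$ preserves the $x_1$-coordinate, hence the exponential factor is unchanged: $(\aleph \zeta)_1 - (\aleph z)_1 = \zeta_1 - z_1$. Second, $\aleph$ is $1$-Lipschitz in $x'$, so $|\aleph z - \aleph \zeta| \leq |z-\zeta|$, and since $G_0(w,\eta)$ is a strictly decreasing function of $|w-\eta|$, one has $G_0(\aleph z, \aleph \zeta) \geq G_0(z,\zeta)$. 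Multiplying by the common exponential factor gives the required inequality on $K^{-}$.

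Integrating this bound against $d\nu(\zeta)$ and then changing variables via $\eta = \aleph \zeta$ using $\aleph_{*}\nu = \mu$,
\begin{equation*}
U_{\nu}^{-}(z) \;\leq\; \int_{E} K^{-}(\aleph z,\, \aleph \zeta)\, d\nu(\zeta) \;=\; \int_{\aleph E} K^{-}(\aleph z,\, \eta)\, d\mu(\eta) \;=\; U_{\mu}^{-}(\aleph z) \;\leq\; 1,
\end{equation*}
so $\nu \in \ca^{-}(E)$. Hence $C^{-}(E) \geq \nu(E) = \mu(\aleph E)$; taking the supremum over admissible $\mu$ on $\aleph E$ and invoking $C^{-} = C^{+} = C$ from Theorem~\ref{th_trii} completes the plan. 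The principal technical point is the Borel measurability of the selection $\sigma$ when $\aleph$ has nontrivial fibers on $E$; the compact-smooth reduction removes any pathology and reduces this step to a standard application of the selection theorem, so the real content of the argument is the kernel monotonicity under $\aleph \otimes \aleph$ exploited above.
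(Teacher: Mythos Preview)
Your proof is correct and follows essentially the same approach as the paper: both arguments exploit the kernel monotonicity $K^-(z,\xi)\le K^-(\aleph z,\aleph\xi)$ (which holds because $\aleph$ preserves the $x_1$-coordinate and is $1$-Lipschitz in $x'$, while $G_0$ is decreasing in the distance), and both lift an extremal/admissible measure on $\aleph E$ back to $E$ to conclude via \eqref{second}. The paper is terse about constructing the lift (it simply says ``let $\nu$ be its preimage under $\aleph$ such that $\nu(E)=\tilde\nu(\aleph E)$''); your invocation of a measurable selection is more explicit than needed---since $\aleph\colon E\to\aleph E$ is a continuous surjection of compact metric spaces, any Borel measure on the image lifts to one on the source---but it is certainly valid and does no harm.
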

\begin{proof}
We can assume that $E$ has piece-wise smooth boundary. Consider any measure $\nu$ on $E$ and take its projection under contraction, call it $\tilde\nu$.
\[
\tilde\nu(\aleph E)=\nu(E)
\]
We also have
\[
K^-(z,\xi)\leq K^-(\aleph z, \aleph \xi)
\]
for any $z,\xi \in E$ and so
\[
U^-_{\nu}(z)\leq U^-_{\tilde\nu}(\aleph z)
\]
Now, take $\tilde\nu$ to be the maximizer for $C^-(\aleph E)$ and let $\nu$
be its preimage under $\aleph$ such that $\nu(E)=\tilde\nu(\aleph E)$.  Then,
\[
U^-_{\nu}(z)\leq 1
\]
so $C(E)\geq C(\aleph E)$ by definition.
\end{proof}

An interesting question is what happens under the action of $\ppr'$.
We are interested in the case when $E$ consists of the finite number
of disjoint components with piece-wise smooth boundaries, e.g., a
finite union of balls or parallelepipeds. The notation $|\ppr' E|$
stands for Lebesgue measure of the projection of $E$. Let $\ccc$ be
the set of all contractions in the plane $\{x_1=0\}$ that preserve
the Lebesgue measure of $\ppr' E$.  Define
\begin{equation}\label{e93}
\ti D(\ppr' E)=\inf_{\aleph\in\ccc} \diam \aleph(\ppr' E)
\end{equation}

\begin{theorem}\label{proj}
Let $H_T=[0,T^2]\times \Pi'_T, \, T>1$, $E\subset
H_T$, and $\ti D(\ppr' E)>2$. Then
\begin{equation} \label{po}
\frac{|\ppr' E|}{\log \ti D(\ppr' E)} \lesssim C(E)
\end{equation}
This inequality is sharp, at least for sets $E$ with the property $\ti D(\ppr' E)\sim \diam\ppr' E$.
\end{theorem}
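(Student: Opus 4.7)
The plan is to produce a positive measure $\nu$ supported in $E$ with $\sup_z U^-_\nu(z)\le 1$ and $\nu(E)\gtrsim |\ppr'E|/\log\tilde D$; after rescaling so that the supremum equals $1$, the extremal characterization \eqref{second} gives $C(E)\ge \nu(E)\gtrsim |\ppr'E|/\log\tilde D$, proving \eqref{po}. Preliminarily, the approximation \eqref{lim2} reduces everything to the case where $\partial E$ is piecewise smooth, and Theorem~\ref{projx}, applied to a contraction in the $x'$-plane that nearly attains the infimum defining $\tilde D$ in \eqref{e93}, lets me replace $E$ by its contracted image and assume $\diam(\ppr'E)\sim \tilde D$.

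The measure is built as follows. Fix a Borel selection $\phi:\ppr'E\to[0,T^2]$ with $(\phi(\xi'),\xi')\in E$, e.g.\ $\phi(\xi')=\min\{x_1:(x_1,\xi')\in E\}$, and define
\[
\nu = \frac{c}{\log\tilde D}\,d\xi'\big|_{\ppr'E}
\]
lifted to the graph $\Phi=\{(\phi(\xi'),\xi'):\xi'\in\ppr'E\}\subset E$, with $c>0$ a small absolute constant to be fixed. Then $\nu(E)=c|\ppr'E|/\log\tilde D$, so everything reduces to proving the $z$-uniform bound
\[
I(z):=\int_{\ppr'E} K^-\bigl(z,(\phi(\xi'),\xi')\bigr)\,d\xi' \lesssim \log\tilde D.
\]
To estimate $I(z)$, I would set $a=\phi(\xi')-z_1$, $b=|\xi'-z'|$ and use
\[
K^- = \frac{1}{2\pi\sqrt{a^2+b^2}}\exp\bigl(a-\sqrt{a^2+b^2}\bigr),\qquad \sqrt{a^2+b^2}-a=\frac{b^2}{\sqrt{a^2+b^2}+a}.
\]
The part where $a\le 0$ contributes only $O(1)$ by exponential decay. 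On $\{a>0\}$ I partition $\ppr'E$ dyadically into cells $L_{k,j}=\{\xi':a(\xi')\in[2^{k-1},2^k),\,b(\xi')\in[2^{j-1},2^j)\}$, noting that $j\le \log_2\tilde D+O(1)$ after the diameter reduction and $|L_{k,j}|\lesssim 2^{2j}$. On the ``parabolic cone'' $a\gtrsim b^2$ (i.e.\ $k\gtrsim 2j$) the kernel behaves like the two-dimensional heat kernel $(2\pi a)^{-1}\exp(-b^2/(2a))$, and the cell contributes at most $\sim 2^{2j-k}$; off the cone there is stretched-exponential decay in $\max(b,b^2/a)$. Summing the geometric series first in $j$ at fixed $k$, and then in $k\le 2\log_2\tilde D$, leaves only $O(\log\tilde D)$ non-negligible cells of size $O(1)$, yielding $I(z)\lesssim \log\tilde D$; choosing $c$ small enough makes $\sup_z U^-_\nu\le 1$.

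The main obstacle is the uniformity of this estimate in $z$. A naive computation at the ``far'' point $z=-T^2 e^1$ already gives $I(z)=O(1)$, but for $z$ sitting close to the graph $\Phi$ or inside the parabolic envelope of $E$ several cells $L_{k,j}$ contribute simultaneously, and one must carefully combine the heat-kernel decay on the parabolic cone with the sharp exponential gain off it. The simultaneous $(a,b)$-dyadic partition, rather than a partition only in $b$, is what makes the bookkeeping work; it also explains why the logarithm is of $\tilde D$ and not of $T$, since cells with $j>\log_2\tilde D$ are empty by construction. Sharpness of \eqref{po} should then be verified on the paraboloid $E=\{(|\xi'|^2,\xi'):|\xi'|\le\tilde D\}$, for which $|\ppr'E|\sim\tilde D^2$, $\tilde D(\ppr'E)\sim\tilde D$, and a direct computation shows $I(0)\sim\log\tilde D$; the dual extremal problem for the equilibrium measure $p^+$ then forces $C(E)\sim\tilde D^2/\log\tilde D$, matching \eqref{po} up to constants.
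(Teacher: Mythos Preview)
Your argument for the inequality \eqref{po} is essentially the paper's: lift planar Lebesgue measure on $\ppr'E$ to a single-sheeted copy inside $E$ and bound the resulting potential by $C\log\tilde D$. The paper carries out the estimate only for $z\in E$ (using a level-set decomposition of the exponent $\phi(x')=\tfrac12|x'|^2/(|x'|^2+h^2(x'))^{1/2}$) and then invokes the maximum principle for $L^-$; your direct $z$-uniform dyadic argument is a legitimate alternative. One small correction: $k$ is not restricted to $k\le 2\log_2\tilde D$ --- it can run up to $2\log_2 T$ --- but since $j\le\log_2\tilde D$, the cells with $k>2\log_2\tilde D$ lie entirely on the parabolic cone and contribute $\sum_{k>2\log_2\tilde D}\tilde D^2 2^{-k}=O(1)$, so the conclusion is unaffected.

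The sharpness argument, however, has a genuine gap. Knowing $I(0)\sim\log\tilde D$ only shows $\sup_z U^-_\mu(z)\gtrsim\log\tilde D$; combined with the upper estimate this yields $C(E)\gtrsim \tilde D^2/\log\tilde D$, which is \eqref{po} itself, not its converse. To bound $C(E)$ from \emph{above} via the comparison with $p^+$ you need a measure $\mu^*$ with $U^-_{\mu^*}(z)\gtrsim\log\tilde D$ at \emph{every} $z\in E$. The paraboloid fails this test: at a rim point $z=(|\zeta'|^2,\zeta')$ with $|\zeta'|$ close to $\tilde D$, almost all of the graph lies to the left of $z$ (i.e.\ $a=\phi(\xi')-z_1<0$), and the $a\le 0$ contribution is $O(1)$ by exponential decay, so $U^-_\mu(z)=O(1)$ there. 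No ``dual extremal'' reasoning repairs this.

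The paper's sharpness example is considerably more delicate. One builds a self-similar set $E_T\subset H_T$ by iterating, over $N=\log_2 T$ generations, a fixed dyadic pattern of rectangles so that $\ppr'E_T=[0,T]^2$. An inductive argument shows the $2^N$ unit obstacles split into groups $\Omega_j$ of cardinality $\binom{N}{j}$ with $U^-_\mu(z)\ge\alpha j$ for $z\in\Omega_j$ (each generation visible ``ahead'' of $z$ adds a fixed positive amount). The points with small $j$ are combinatorially rare, $\sum_{j<\epsilon N}\binom{N}{j}=o(2^N)$, so boosting $\mu$ by a factor $N$ on those obstacles yields $\mu^*$ with $\mu^*(E_T)\sim T^2$ and $U^-_{\mu^*}\gtrsim N$ everywhere on $E_T$; then $U^-_{p^+}\lesssim U^-_{\mu^*}/N$ by the maximum principle, and \eqref{second1} gives $C(E_T)\lesssim T^2/\log T$. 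A single smooth graph like the paraboloid does not have this layered structure and cannot serve as the extremal example.
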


\begin{proof}
We start with a certain reduction.  By the standard approximation
argument, one can focus on the case when $E=\bigcup^N_{j=1} E_j$,
where $E_j\subset\{x: x_1=x^j_1\}$ is a (planar) compact and
different $\ppr' E_j$ are disjoint.

\medskip
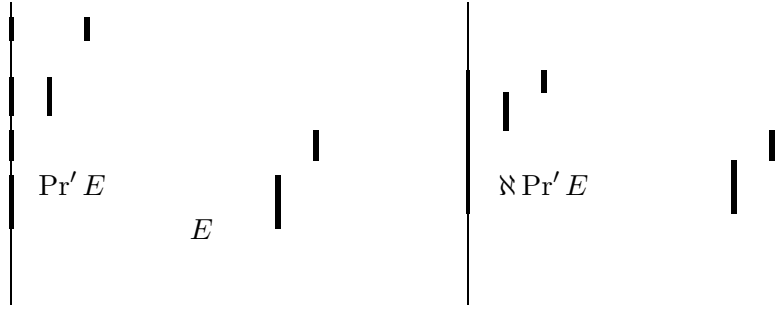
\begin{figure}

\ifx\JPicScale\undefined\def\JPicScale{1}\fi \unitlength
\JPicScale mm

\begin{center}

\begin{picture}(200,40)(-20,0)

\linethickness{0.6mm} \put(65,23){\line(0,1){5}}
\linethickness{0.6mm} \put(70,28){\line(0,1){3}}
\linethickness{0.6mm} \put(95,12){\line(0,1){7}}
\linethickness{0.6mm} \put(100,19){\line(0,1){4}}

\linethickness{0.1mm} \put(60,0){\line(0,1){40}}
\linethickness{0.4mm}  \put(60,12){\line(0,1){19}}

\linethickness{0.6mm} \put(5,25){\line(0,1){5}}
\linethickness{0.6mm} \put(10,35){\line(0,1){3}}
\linethickness{0.6mm} \put(35,10){\line(0,1){7}}
\linethickness{0.6mm} \put(40,19){\line(0,1){4}}

\linethickness{0.1mm} \put(0,0){\line(0,1){40}}

\linethickness{0.4mm} \put(0,25){\line(0,1){5}}
\linethickness{0.4mm} \put(0,35){\line(0,1){3}}
\linethickness{0.4mm} \put(0,10){\line(0,1){7}}
\linethickness{0.4mm} \put(0,19){\line(0,1){4}}

\put(25,10){\makebox(0,0)[cc]{$E$}}
\put(8,16){\makebox(0,0)[cc]{$\ppr' E$}}
\put(70,16){\makebox(0,0)[cc]{$\aleph \ppr' E$}}
\end{picture}

\end{center}
\caption{Action of contraction $\aleph$ on $\ppr' E$ ($d=2$).}\label{ff1}
\end{figure}

We also suppose that $\diam (\ppr' E)\lesssim \ti D(\ppr' E)$ since otherwise we can use the previous theorem.
So, let $\upsilon=m_2|_E$, where $m_2$ is the (planar) Lebesgue measure.  Of course,
$\upsilon(E)=|\ppr' E|$.
Fix any $z=(z_1,z')\in E$. That is, $z_1=x^{(j_0)}_1$ for a $j_0\in \{1, \dots, N\}$. For brevity, put $E'$ ($E'_j$) to be the $x'$-projection of $E$ ($E_j$, respectively) on $\{x: x_1=x^{(j_0)}_1\}$ and $\ti d=\ti D(\ppr' E)$. Notice that, up to a translation in the $x'$-direction,  $E'\subset B'(0, \ti d)$.
Then,
\[
U^-_{\upsilon}(z)\lesssim 1+\int_{B'(0,\ti d)\bsl B'(0,1)}
\chi_{E'}(x')
\frac{e^{-(|x'|^2+h^2(x'))^{1/2}+h(x')}}{(|x'|^2+h^2(x'))^{1/2}}\,
dx' = 1+I
\]
where $h: E'\to \br_+$ is the step function parameterizing the set
$E$, i.e. $h(x')=x^{(j)}_1-x^{(j_0)}_1$ for $x'\in \ppr' E_j$. In
$I$, the contribution from the set where $h(x')\leq 0$ is controlled
by the absolute constant. Then,
\begin{eqnarray*}
I&\leq&C+\int_{B'(0,\ti d)\bsl B'(0,1), h(x')>0}
\frac{e^{-(|x'|^2+h^2(x'))^{1/2}+h(x')}}{(|x'|^2+h^2(x'))^{1/2}}\chi_{E'}(x') dx'\\
&\leq& C+\int_{B'(0,\ti d)\bsl B'(0,1), h(x')>0} \frac{e^{-\frac12
\frac{|x'|^2}{(|x'|^2+h^2(x'))^{1/2}}}}
{(|x'|^2+h^2(x'))^{1/2}}\chi_{E'}(x') dx'
\end{eqnarray*}
Let
\[
\phi(x')=\frac 12 \frac{|x'|^2}{(|x'|^2+h^2(x'))^{1/2}}
\]
and $Z_n=\{x': n<4\phi(x')\leq n+1\}, \quad n=1,2,\ldots$. Then,
\begin{eqnarray*}
I&\lesssim&1+\int_{x'\in B'(0,\ti d)\bsl B'(0,1), \ |x'|^2\leq
h(x')}(\dots)+\sum_{n}
\int_{x'\in B'(0,\ti d)\bsl B'(0,1), \ x'\in Z_n} (\dots)\\
&\lesssim& 1+\int_{B'(0,\ti d)\bsl B'(0,1)}
\frac{1}{|x'|^2}dx'+\sum_{n} \int_{B'(0,\ti d)\bsl B'(0,1)} e^{-n/4}
\frac {n}{|x'|^2}\,dx'\lesssim 1+\log \ti d
\end{eqnarray*}
This estimate and the maximum principle prove inequality (\ref{po}).

It is more interesting that this estimate is essentially sharp under assumptions mentioned in the statement of the theorem.

Let $T=2^N$. We will take the set $E_T=A_T\times [0,T]$ where $A_T$
is the subset of $\tilde H_T=[0, T^2]\times [0,T]$ constructed as
follows. The set $A_T$ will be chosen such that $E_T'=[0,T]\times
[0,T]$. Divide $\tilde H_T$ into eight equal parts each
translationally equivalent to $\tilde H_{T/2}$, see Figure 2. These
parts are denoted by $A_{i_1 i_2}$, where $i_k$ is the number of the
part in the $e^k$-direction, $k=1, 2$. That is, we have
$i_1=1,\dots, 4$, $i_2=1,2$. Then, fix a configuration $\ccc_1$ of
these rectangles; the subscript 1 indicates the first generation.
For instance, let $\ccc_1=\{A_{12}, A_{41}\}$.

\begin{figure}[b]

\ifx\JPicScale\undefined\def\JPicScale{1}\fi \unitlength
\JPicScale mm
\begin{center}
\begin{picture}(200,40)(0,0)

\linethickness{0.4mm} \put(0,0){\line(1,0){128}}
\linethickness{0.4mm} \put(0,32){\line(1,0){128}}

\linethickness{0.4mm} \put(0,16){\line(1,0){128}}

\linethickness{0.4mm} \put(0,0){\line(0,1){32}}
\linethickness{0.1mm} \put(8,0){\line(0,1){32}}
\linethickness{0.1mm} \put(16,0){\line(0,1){32}}
\linethickness{0.1mm} \put(24,0){\line(0,1){32}}
\linethickness{0.4mm} \put(32,0){\line(0,1){32}}

\linethickness{0.1mm} \put(40,0){\line(0,1){32}}
\linethickness{0.1mm} \put(48,0){\line(0,1){32}}
\linethickness{0.1mm} \put(56,0){\line(0,1){32}}
\linethickness{0.4mm} \put(64,0){\line(0,1){32}}

\linethickness{0.1mm} \put(72,0){\line(0,1){32}}
\linethickness{0.1mm} \put(80,0){\line(0,1){32}}
\linethickness{0.1mm} \put(88,0){\line(0,1){32}}
\linethickness{0.4mm} \put(96,0){\line(0,1){32}}
\linethickness{0.1mm} \put(104,0){\line(0,1){32}}

\linethickness{0.1mm} \put(112,0){\line(0,1){32}}
\linethickness{0.1mm} \put(120,0){\line(0,1){32}}
\linethickness{0.4mm} \put(128,0){\line(0,1){32}}

\linethickness{0.1mm} \put(0,8){\line(1,0){128}}
\linethickness{0.1mm} \put(0,24){\line(1,0){128}}

\linethickness{0.8mm} \put(96,8){\line(0,1){8}}
\linethickness{0.8mm} \put(120,0){\line(0,1){8}}

\linethickness{0.8mm} \put(24,16){\line(0,1){8}}
\linethickness{0.8mm} \put(0,24){\line(0,1){8}}

\end{picture}
\end{center}
\caption{$A_T$ for  $N=2$.}\label{ff2}
\end{figure}
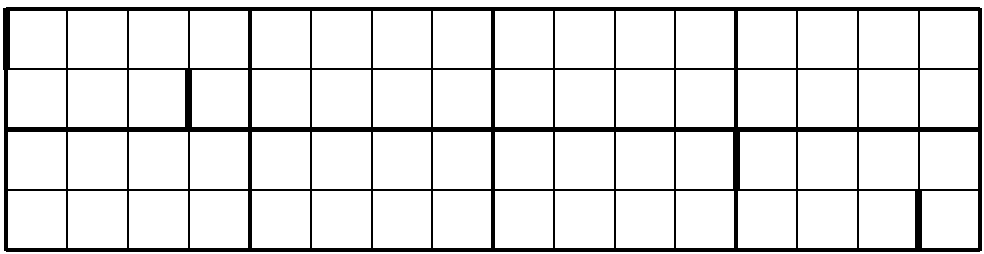

We will properly scale and translate the configuration to get a ``fractal dyadic-type'' set.
 Dyadically partition every $A_{i_1 i_2}$  as we did it for $\tilde H_T$; every part of
  $A_{i_1 i_2 }$ is translationally invariant to $\tilde H_{T/2^2}$.
  These parts are denoted by $A_{i_1 i_2 ;\,  i'_1 i'_2 }$.
  Place the shrunk and translated images of the rectangle from $\ccc_1$ inside each part of $\ccc_1$. This is the configuration $\ccc_2$, i.e.
$\ccc_2=\{A_{12;\, 12}, A_{12;\, 41}, A_{41;\, 12}, A_{41;\, 41}\}$.

Continue in the same manner up to $\ccc_N$. All rectangles from the
\mbox{$N$-th} generation (and hence those appearing in $\ccc_N$) are
the unit squares. Now we set
\begin{center}
$\dsp A_T=\bigcup_{P\in\ccc_N}$ \{ the left hand side of $P$ (in the
$e^1$-direction) \}
\end{center}
and $E_T=A_T\times [0,T]$. Observe that $|\ppr' E_T|=T^2$ and $E_T$
satisfies the assumption $\ti D(\ppr' E)\sim \diam\ppr' E$ from the
formulation of the theorem.

Next, let us estimates $C(E_T)$.  First, let $\mu=m_2|_{E_T}$ where
$m_2$ is the (planar) Lebesgue measure. Obviously, $\mu(E_T)=T^2$.
To estimate $U^-_{\mu}$ on $E_T$ notice that if $z\in A_{12}\times
[0,T]$, then
\[
U^-_{\mu}(z)\geq U^-_{\mu_{12}}(z)+\alpha
\]
where $\mu_{12}$ is $\mu$ restricted to $A_{12}\times [0,T]$ and
$\alpha$ is some universal positive constant. Repeating the same
estimates over all generations and using inductive argument, we see
that the set of $2^N$ obstacles $\{E\}$ that form $E_T$ (and each
obstacle is a rectangle of size $1\times T$ parallel to $OX_2X_3$
coordinate plane) can be partitioned into disjoint groups $\Omega_j,
j=0,\dots N$, such that
$$
U^-_{\mu}(z)\geq \alpha j
$$
for $z\in \Omega_j$ and the cardinality of $\Omega_j$ is $\binom
Nj$. Notice that
\[
\sum_{j: |j-N/2|\lesssim \sqrt{N}} \binom Nj> \delta 2^N, \quad
\delta>0
\]
by Stirling's formula.

Next, we are going to assign $E_T$ a slightly different measure
$\mu^*$ so that $\mu\leq \mu^*$. We take $\mu^*=\mu$ for obstacles
in the groups $\Omega_j$ with $j\ge L=\epsilon N, \epsilon<1/2$ and
\[
\mu^*=N\mu
\]
on $E^j$ for $j< L$. Notice that we now have
\[
U^-_{\mu^*}(z)\gtrsim N=\log T
\]
for all $z\in E_T$ by construction (on $\Omega_0$ one can obviously
improve the bound to $U^-_{\mu}(z)>C>0$). On the other hand,
\[
\sum_{j< L} \binom Nj \lesssim N \binom NL
\]
and, by Stirling's formula once again,
\[
\binom NL \lesssim \frac{N^{-1/2}}{x^N}, \quad x=\epsilon^\epsilon
(1-\epsilon)^{1-\epsilon}>1/2
\]
since $\epsilon<1/2$. Therefore, $\mu^*(E_T)\sim T^2$. Furthermore,
$U^-_{p^+}(z)\lesssim U^-_{\mu^*}(z)/{\log T}$ for any $z\in E_T$,
and, by the maximum principle, for all $z\in\br^3$. Then, taking
$z=(-r,0,0), r\to+\infty$ and recalling \eqref{e91}, \eqref{e92}, we
come to
\[
C^-(E_T)\lesssim \frac{T^2}{\log T}
\]
which finishes the proof.
\end{proof}

\nt{\bf Remark.} When we estimate the modified capacity of the set
$E_T$, we are essentially in the macroscopic regime where the
parabolic approximation takes place. It is instructive to compare
this situation to the one discussed in Taylor-Watson \cite{tw}. For
instance, to compute the parabolic (or heat) capacity of $E_T$, one
can use scaling by $T^2$ in time variable $x_1$ and by $T$ in space
variable $x'\in \br^2$. Of course, we can not apply this trick in
our case but the situation we considered in Theorem \ref{proj}
resembles the calculation in \cite{tw}, Theorem 5 and Example 3. It
is proved there that there are parabolically polar sets in
$[0,1]\times [0,1]$ with full projection on $[0,1]$ (in $e^1$-axis).
The factor $\log T$ appears when computing the Riesz--$1/2$ capacity
of the $2^N$--th approximation to the Cantor--$1/2$ set (see
\cite{tw}, Theorem 5, once again).\medskip

The theorem below is concerned with the case $d=2$. In this
situation, we always have $\ti D(\ppr_y E)=|\ppr_y E|$  (see
\eqref{e93} for the definition of $\ti D$). Therefore,

\begin{theorem}\label{proj_}
Let $E\subset [0,T^2]\times [0,T], \, T>1$, and $|\ppr_y E|>2$. Then
$$
\frac{|\ppr_y E|}{\log |\ppr_y E|} \lesssim C(E)
$$
and this inequality is sharp.
\end{theorem}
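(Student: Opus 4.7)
The plan is to transpose the two-step argument of Theorem~\ref{proj} to the planar setting. The preliminary observation $\ti D(\ppr_y E) = |\ppr_y E|$ in dimension two is automatic, since any one-dimensional subset of $\br$ can be translated onto an interval of equal Lebesgue measure, giving a measure-preserving contraction onto an interval. For the lower bound on $C(E)$, I would first reduce by approximation (via (\ref{lim2})) to the case where $E = \bigcup_{j=1}^N \{x_1^{(j)}\} \times I_j$ is a finite disjoint union of vertical segments, and set $\upsilon = m_1|_E$, so that $\upsilon(E) = |\ppr_y E|$. The heart of the argument is the uniform estimate
\[
U^-_\upsilon(z) \lesssim \log |\ppr_y E|, \quad z \in E,
\]
because once it is available, $\mu = (\log|\ppr_y E|)^{-1}\upsilon$ is admissible up to an absolute constant and (\ref{second}) delivers $C(E) \gtrsim |\ppr_y E|/\log |\ppr_y E|$.

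To establish the uniform bound, note that in $d=2$ the Green's function $G_0(z,\xi)$ for $(-\Delta+1)^{-1}$ has a logarithmic singularity at zero and decays like $|z-\xi|^{-1/2} e^{-|z-\xi|}$ at infinity, so $K^-$ exhibits the same two regimes as in Theorem~\ref{proj}. With $x' = y - y(z)$ and $h(x')$ the step function recording the $x_1$-offset of $E$, I would split the integration exactly as in Theorem~\ref{proj}, via the level sets of the weight $\phi(x') = \tfrac12 |x'|^2/(|x'|^2 + h(x')^2)^{1/2}$. The contributions from $h(x') \leq 0$ and $0 < h(x') \leq |x'|^2$ are controlled by universal constants (the Gaussian-type decay from $e^{-\phi}$ defeats the weaker singularity of $G_0$). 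On the region $h(x') > |x'|^2$ the integrand is pointwise bounded by $C|x'|^{-1}$ after optimizing over $h$, and a one-dimensional integration of $|x'|^{-1}$ over $1 < |x'| < |\ppr_y E|$ produces exactly $\log|\ppr_y E|$; the lower singularity exponent $-1/2$ of the two-dimensional Green's function is compensated precisely by the passage from a planar to a one-dimensional projection integration compared with Theorem~\ref{proj}.

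For the sharpness I would carry out the planar version of the dyadic construction from the proof of Theorem~\ref{proj}. Inside $[0,T^2]\times[0,T]$ with $T = 2^N$, iterate the selection pattern $\ccc_1 = \{A_{12}, A_{41}\}$ across $N$ generations and let $E_T$ be the union of the left edges of the $2^N$ smallest rectangles. Then $|\ppr_y E_T| \sim T$, and the same combinatorial argument using Stirling's formula partitions the $2^N$ segments into groups $\Omega_j$ of cardinality $\binom{N}{j}$ on which the normalized Lebesgue potential is $\gtrsim j$; boosting the measure by a factor of $N$ on $\Omega_j$ for $j < \epsilon N$ yields an auxiliary measure $\mu^*$ with $\mu^*(E_T) \sim T$ and $U^-_{\mu^*} \gtrsim \log T$ everywhere on $E_T$, and the maximum principle applied to $U^-_{p^+}$ forces $C(E_T) \lesssim T/\log T$, matching the lower bound. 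The main obstacle is the book-keeping in the split above: one must verify that the trade-off between the singularity exponent of the Green's function in $d=2$ and the drop in the dimension of the projection integration produces exactly a logarithm rather than a fractional power; this is ultimately a scale-by-scale calculation following the blueprint of Theorem~\ref{proj} but needs to be reworked in the planar geometry.
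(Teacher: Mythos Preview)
Your approach is exactly the paper's: Theorem~\ref{proj_} is presented there as the $d=2$ restatement of Theorem~\ref{proj} once one observes that $\ti D(\ppr_y E)=|\ppr_y E|$, and no separate proof is given beyond this remark and the accompanying figures. Your plan to rerun the potential estimate with the two-dimensional kernel and to reuse the dyadic construction for sharpness is precisely what is intended.

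One slip to fix when you write it up: the region $0<h(x')\le |x'|^2$ does \emph{not} contribute only a universal constant. When $h(x')$ is comparable to $|x'|^2$ the weight $\phi(x')$ stays bounded, so there is no Gaussian decay to appeal to. Following the paper's level-set decomposition, on $Z_n=\{n<4\phi\le n+1\}$ the $d=2$ integrand is $\sim e^{-n/4}\sqrt{n}/|x'|$, and one-dimensional integration over $1<|x'|<|\ppr_y E|$ yields $e^{-n/4}\sqrt{n}\,\log|\ppr_y E|$; summing in $n$ gives $C\log|\ppr_y E|$. Thus both the region $h\ge |x'|^2$ and the level-set sum produce a logarithm, just as in the $d=3$ computation of Theorem~\ref{proj}, and the final bound is unaffected.
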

Figures \ref{ff1} and \ref{ff2} illustrate the right choice of
contraction $\aleph$ and the construction of the set $E_T$.

\subsection{How does the capacity of a set depend on the speed of the drift?}\label{ss35}
The It\^o diffusion $G_t$ introduced in Theorem \ref{th1}, (2) (see
also \eqref{fk}), was designed to handle the asymptotics of $G(x,0;
i)$ as $|x|\to\infty$. It is clear that the same constructions go
through for any $ik\in i\br_+$ with the only difference that the
kernels $K^\pm$ defining the potential theory are
\[
K^\mp_k (x,y)=\frac 1{2\pi} \frac{e^{-k(|x-y|\pm(x_1-y_1))}}{|x-y|}
\]
where $x,y\in \br^3$. So let $k\in\br^+$ be fixed. Recall the
definitions and notations from Sections \ref{ss31}, \ref{ss32}. Take
a  compact set $E$. Let $\mu_k$ be a maximizer for $C_k$, i.e. the
capacity constructed with respect to $K^-_k$.  That is,
\[
\sup_x \int K^-_k(x,y)d\mu_k (y)=1, \qquad  \mu_k(E)=C_k(E)
\]
The monotonicity of the kernel $K^-_k$ in $k$ implies that $C_k(E)$
grows in $k$.

\section{Some applications}\label{s5}
We can now state some results which are direct corollaries of what
we have proved so far. We only treat $d=2$, the general case can be
handled similarly. Consider
\[
H=H_V=-\Delta+V,
\]
where $V=0$ on $\Omega$. We also assume that $V\geq 0$, $V$ is
measurable and locally bounded. These conditions are sufficient to
define  $H$ as a self-adjoint operator.

The first application is a relatively simple geometric test that
guarantees the presence of the a.c. spectrum. Denote $E=\Omega^c$. We
introduce the following sets via polar coordinates/complex notation:
\[
Q_{n,j}=\left\{ z=re^{i\theta}, 4^n\leq r\leq 4^{n+1},
\frac{2\pi\, j}{ 2^{n}}\leq \theta\leq \frac{2\pi\, (j+1)}{ 2^{n}}
\right\}
\]
where $n=0,1,2,\ldots,$ and $j=0,1,\ldots, 2^n-1$. These curvilinear
rectangles have (polar) ``lengths"  and ``heights" comparable to
$4^n$ and  $2^n$, respectively.  So, they roughly resemble  the
characteristic rectangles discussed in Sections~\ref{ss32},
\ref{s34}. Notice also that the angular projections of $Q_{n+1,j}$
for various $j$ is the diadic collection of arcs which  is the
refinement of arcs from the previous $n$-th generation. Define
\[
E_{n,j}=E\cap Q_{n,j}
\]
For each $n$ and $j$, introduce the Hausdorff content in the
direction $\theta_{n,j}=(2\pi\, j)2^{-n}$ (see Section \ref{ss33}) and
compute $M_{\theta_{n,j},h}(E_{n,j})$. Notice that good upper bounds
for these quantities are sufficient to handle the convergence of
series \eqref{e1012} appearing below.

Let $d_{n,j}(\theta)$ be the arclength distance from $e^{i\theta}$ to the arc
\[
I_{n,j}= [2\pi j\, 2^{-n}, 2\pi(j+1)\, 2^{-n})
\]
 as a set on the unit circle $\mathbb{T}$. Now, we can introduce
\begin{equation}\label{e1012}
\phi(\theta)=\sum_{n=0}^{\infty}
 \sum_{j=0}^{2^n-1} M_{\theta_{n,j},h}(E_{n,j})(2^{-n}+\sin |\theta-\theta_{n,j}|)
\exp\Bigl(- 4^n (1-\cos d_{n,j}(\theta))\Bigr)
\end{equation}

\begin{theorem}
If $\phi(\theta)$ is finite on a set of positive measure, then
$\sigma_{ac}(H)=\mathbb{R}_+$.\label{eas}
\end{theorem}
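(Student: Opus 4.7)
By Theorem \ref{th2}, to conclude $\sigma_{ac}(H)\supseteq\mathbb{R}_+$ (the reverse inclusion is automatic since $H\geq 0$) it suffices to find a nonnegative compactly supported $f$ for which the right-hand side of \eqref{th21} is positive. Combining \eqref{feynman-kac} with \eqref{fk}, and using that $V=0$ on $\Omega$, the problem reduces to showing that for some $x^0\in\Omega$ there is a set $S\subset\Sigma_1$ of positive measure such that the drifted Brownian motion $G^\theta_t=x^0+t\theta+B_t$ satisfies $\mathbb{P}_{x^0}(G^\theta_t\in\Omega\ \forall\,t\geq 0)>0$ for every $\theta\in S$. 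Equivalently, we need $\mathbb{P}_{x^0}(G^\theta_t\text{ hits }E)<1$ on a positive measure set of directions.

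We estimate the hitting probability by the dyadic union bound
\begin{equation*}
\mathbb{P}_{x^0}(G^\theta_t\text{ hits }E)\leq\sum_{n,j}\mathbb{P}_{x^0}(G^\theta_t\text{ hits }E_{n,j}).
\end{equation*}
Each $Q_{n,j}$ has radial extent $\sim 4^n$ and transverse (arc-length) extent $\sim 2^n$, so in coordinates aligned with the drift direction $\theta$ it fits into a characteristic rectangle of scale $T\sim 2^n$ displaced transversely by $\sim 4^n\sin|\theta-\theta_{n,j}|$. The $d=2$ analog of Proposition \ref{trii} (provided by Appendices A and B applied to the operator $L^\theta=\tfrac12\Delta-\theta\cdot\nabla$) yields
\begin{equation*}
\mathbb{P}_{x^0}(G^\theta_t\text{ hits }E_{n,j})\lesssim C(E_{n,j})\bigl(2^{-n}+\sin|\theta-\theta_{n,j}|\bigr)\exp\!\bigl(-4^n(1-\cos d_{n,j}(\theta))\bigr),
\end{equation*}
where the $2^{-n}\sim 1/T$ replaces the $1/T^2$ of Proposition \ref{trii} (in two dimensions the sweeping onto a line gives a $1/T$ rather than $1/T^2$ normalization), and the exponential is the planar counterpart of the Gaussian damping $\exp(-|\bi|^2/(2+\sqrt{\ldots}))$ rewritten via spherical distance. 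Applying the directional version of Proposition \ref{content} in the form $C(E_{n,j})\lesssim M_{\theta_{n,j},h}(E_{n,j})$ (with the $d=2$ measure function from the Remark following \ref{content}), the right-hand side is bounded by a constant multiple of $\phi(\theta)$.

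The hypothesis furnishes a positive-measure set $S_0$ where $\phi$ is finite, but not small; we reduce to a small tail by truncation. Let $V_N=V\chi_{\{|x|>4^N\}}$ and let $\phi_N(\theta)$ denote the tail sum in \eqref{e1012} over $n\geq N$. Dominated convergence gives $\phi_N\to 0$ pointwise on $S_0$, and Egorov produces a subset $S_N\subseteq S_0$ of positive measure on which the convergence is uniform. For $N$ large enough the estimate above, now applied to $V_N$ and $E\cap\{|x|>4^N\}$, gives $\mathbb{P}_{x^0}(G^\theta_t\text{ hits }E\cap\{|x|>4^N\})<1/2$ for $\theta\in S_N$, hence $\sigma_{ac}(H_{V_N})\supseteq\mathbb{R}_+$. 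Finally $V-V_N$ is a bounded compactly supported potential (by local boundedness of $V$), so $(H+c)^{-1}-(H_{V_N}+c)^{-1}$ is trace class and the absolutely continuous spectrum is preserved, giving $\sigma_{ac}(H)=\mathbb{R}_+$.

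The main obstacle is the capacitary bound in the second paragraph: one must check that the $d=2$ variable-direction analog of Proposition \ref{trii} reproduces precisely the prefactor $(2^{-n}+\sin|\theta-\theta_{n,j}|)$ and the exponential weight $\exp(-4^n(1-\cos d_{n,j}(\theta)))$ appearing in $\phi$. This amounts to relating the geometry of the curvilinear box $Q_{n,j}$ viewed from the drift direction $\theta$ to a standard characteristic rectangle $[0,T^2]\times[0,T]$ offset by a lattice index $|\bi|$, uniformly in $\theta$; this is exactly the content of Appendices A and B.
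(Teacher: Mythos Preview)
Your overall architecture—trace-class reduction to remove a large ball near the origin, Egorov to make the tail $\phi_{n_0}(\theta)$ uniformly small, the union bound over the curvilinear boxes $E_{n,j}$, the capacity/Hausdorff-content estimate, and the final probability bound via \eqref{lapa}—is exactly the paper's route. Two technical points in your middle step are, however, misattributed and would not go through as written.

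First, the hitting-probability estimate for a single box $E_{n,j}$ is \emph{not} obtained from Proposition~\ref{trii} or from Appendices~A and~B. Proposition~\ref{trii} concerns sets inside a fixed axis-aligned characteristic box $[T^2,2T^2]\times\Pi'_{\bi}$; the $Q_{n,j}$ are curvilinear and one needs the estimate uniformly in the drift direction $\theta$, so one cannot simply quote it. Appendices~A and~B give Carleman-type bounds for tube-like domains and do not produce the Gaussian weight in $\phi$. What the paper actually uses is Theorem~\ref{th_trii}(ii): the hitting probability equals the equilibrium potential at the origin,
\[
\mathbb{P}\bigl(G^\theta_t\text{ hits }E_{n,j}\bigr)=\int K^-_\theta(0,\xi)\,d\nu_{n,j}(\xi),
\]
where $\nu_{n,j}$ is the $\theta$-directional equilibrium measure of $E_{n,j}$. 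One then reads off the factors $2^{-n}$ and $\exp(-4^n(1-\cos d_{n,j}(\theta)))$ directly from the explicit kernel $K^-_\theta(0,\xi)=C\,e^{-|\xi|+\langle\xi,\theta\rangle}/|\xi|^{1/2}$, obtaining $\mathbb{P}(\cdots)\lesssim 2^{-n}\exp(-4^n(1-\cos d_{n,j}(\theta)))\,C_\theta(E_{n,j})$ for small angles, together with a trivially summable contribution from large angles.

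Second, the prefactor $(2^{-n}+\sin|\theta-\theta_{n,j}|)$ is not a feature of the harmonic-measure bound; it comes from the \emph{direction change} in the Hausdorff content. The capacity that appears naturally is $C_\theta(E_{n,j})$, which Proposition~\ref{content} bounds by $M_{\theta,h}(E_{n,j})$. One then rotates the covering to compare contents in nearby directions,
\[
M_{\theta,h}(E_{n,j})\lesssim \bigl(1+2^n\sin|\theta-\theta_{n,j}|\bigr)\,M_{\theta_{n,j},h}(E_{n,j}),
\]
and only after multiplying by the $2^{-n}$ from the kernel does the sum collapse into the form of $\phi(\theta)$. In your write-up the directional dependence of $C(\cdot)$ is suppressed and the $\sin$-factor is attributed to the wrong step; once you restore the subscript $\theta$ and insert the covering-rotation inequality, your argument becomes identical to the paper's.
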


\begin{proof}
By the standard trace-class perturbation argument,
$\sigma_{ ac}(H)=\sigma_{ ac}(-\Delta+V\cdot \chi_{|x|>R})$
for any $R>0$. Thus, we can always assume that
$$
V(x)=0, \quad |x|<4^{n_0}
$$
for any given $n_0>0$. Let $\phi_{n_0}(\theta)$  be the remainder of  series \eqref{e1012}
$$
\phi_{n_0}(\theta)=\sum_{n=n_0}^{\infty}
\sum_{j=0}^{2^n-1} M_{\theta_{n,j},h}(E_{n,j})(2^{-n}+\sin |\theta-\theta_{n,j}|)
\exp\Bigl(- 4^n (1-\cos d_{n,j}(\theta))\Bigr)
$$
By Egorov's theorem on convergence, one can find a measurable set $\Omega_{n_0}\subseteq \mathbb{T}$ such that
$|\Omega_{n_0}|>\delta>0$ uniformly in $n_0$ and $\phi_{n_0}(\theta)<\epsilon_{n_0}$ on $\Omega_{n_0}$, where $\epsilon_{n_0}\to 0$ as $n_0\to\infty$.

It is sufficient to show that, for large $n_0$, we have
$$\inf_{\theta\in\Omega_{n_0}} a_N(\theta)>\frac 12
$$
uniformly in $N$, where
$a_N$ is amplitude \eqref{as} corresponding to the truncated potential $V_N(x)=V\cdot \chi_{4^{n_0}<|x|<4^{N+1}}$ and $x^0=0$.

Fix $\theta\in \Omega_{n_0}$. By (\ref{lapa}), we need to prove
\[
\mathbb{P}\Bigl(G_t^\theta {\rm \ does\ not \ hit\ } E\cap \{4^{n_0}\leq |z|\leq 4^{N+1}\}\Bigr)>\frac 12
\]
uniformly in $N$. Instead, we will show that
\[
\mathbb{P}\Bigl(G_t^\theta {\rm \  hits\ } E\cap \{4^{n_0}\leq |z|\leq 4^{N+1}\}\Bigr)
\] is small uniformly in $N$ provided $n_0$ is large enough. Here, $G^\theta_t=te^{i\theta}+B_t$. Obviously,
\[
\mathbb{P}\Bigl(G_t^\theta {\rm \  hits\ } E\cap \{4^{n_0}\leq |z|\leq 4^{N+1}\}\Bigr)\leq \sum_{n=n_0}^N \sum_{j=0}^{2^n-1}
 \mathbb{P}\Bigl(G_t^\theta {\rm \  hits\ } E_{n,j}\Bigr)
\]
Fix $n$ and consider $E_{n,j}$. Let $d\nu_{n,j}$ be the maximizer in the definition of $C_\theta(E_{n,j})$.
Then,
\begin{eqnarray*}
\mathbb{P}\Bigl(G_t^\theta {\rm \  hits \ } E_{n,j}\Bigr)=\int
K^-_\theta(0,\xi)d\nu_{n,j}(\xi) &\lesssim& 2^{-n}\int
e^{-|\xi|(1-\cos  \alpha(\xi,\theta))}d\nu_{n,j}(\xi)
\end{eqnarray*}
where $\alpha(\xi,\theta)$ is the angle between $\xi$ and $\theta$.
We always have $C_\theta(E_{n,j})\lesssim 4^n$. If $j$ is such that,
say, $\alpha(\xi,\theta)>\frac 1{10}$, then
\begin{equation}
2^{-n}\int e^{-|\xi|(1-\cos  \alpha(\xi,\theta))}d\nu_{n,j}(\xi) \lesssim 2^{-n}e^{-4^{n-1}}4^n \label{oe1}
\end{equation}
Hence, we are only interested in small angles $\alpha(\xi, \theta)$ for which
\begin{equation}
2^{-n}\int e^{-|\xi|(1-\cos  \alpha(\xi,\theta))}d\nu_{n,j}(\xi) \lesssim 2^{-n}\exp \Bigl(-4^n(1-\cos d_{n,j}(\theta))\Bigr) C_\theta(E_{n,j})\label{oe2}
\end{equation}
Proposition \ref{content} yields
\[
C_\theta(E_{n,j})\lesssim M_{\theta,h}(E_{n,j})
\]
Now, obvious geometric considerations based on rotation of the coverings give
\[
M_{\theta,h}(E_{n,j})\lesssim M_{\theta_{n,j},h} (1+2^n\sin |\theta_{n,j}-\theta|)
\]
So, summation gives
\[
\mathbb{P}\Bigl(G_t^\theta {\rm \,  hits\, } E\cap \{4^{n_0}\leq |z|\leq 4^{N+1} \}\Bigr)\lesssim \phi_{n_0}(\theta)+\sum_{n=n_0}^\infty 4^n e^{-4^{n-1}} \to 0
\]
for $n_0\to\infty$ uniformly in $N$.
\end{proof}

\nt
{\bf Remark.} The following estimate on $\phi(\theta)$ is straightforward
\[
\phi(\theta)\lesssim \sum_{n=n_0}^{\infty}2^{-n}
 \sum_{j} M_{\theta_{n,j},h}(E_{n,j})(1+k(n,j,\theta))
\exp\Bigl( -\alpha k^2(n,j,\theta)\Bigr) + \overline{o}(1)
\]
where $0\le\alpha<\frac 12$  and $k(n,j,\theta)$ is the number of
the full diadic intervals on $\mathbb{T}$ between $\theta$ and
$I_{n,j}$.  An advantage of  Theorem \ref{eas} is its relative
simplicity since any reasonable covering of $E$ provides an upper
bound for the Hausdorff content.

\medskip
The second application deals with the case when the Hausdorff
content is  too rough an instrument to estimate the capacity. We are
going to construct a set $E$ such that any ray issued from the origin
intersects it infinitely many times and yet the associated
Schr\"odinger operator has the a.c. spectrum that is equal to $\br_+$.

\medskip\nt
{\bf Example.} Consider some large $T=2^n$ and the annulus
$\{T^2\leq |z|\leq 2T^2\}$. Take intervals $I_{j}=[(2\pi j)\,
T^{-1}, 2\pi (j+1)\, T^{-1})$, $j=0,1,\ldots, T-1$,  and sets
$Q_j=\{z=re^{i\theta}: T^2\leq r\leq 2T^2, \theta\in I_j\}$. In each
$Q_j$,  place a set $E_j$ in a way similar to that one of example
given in the proof of Theorem~\ref{proj}. The only difference now is
that we dyadically divide the angle and the radius. Also, instead of
vertical segments of unit length we take the corresponding sectorial
rectangles of the size comparable to one. In this construction, all
$E_j$ can be obtained from, say, $E_0$ by rotations by angles $(2\pi
j)\, T^{-1}, \ j=1,\ldots T-1$. Finally, set $E_T=\bigcup_j E_j$.

\begin{lemma}
As $T\to\infty$, we have
\[
\delta_T=\sup_{\theta\in [0,2\pi)} \mathbb{P}\left(G^\theta_t=(te^{i\theta}+B_t)
{\rm \ hits \ } E\right)\to 0
\]
\end{lemma}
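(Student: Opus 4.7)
The plan is to fix any $\theta\in[0,2\pi)$, bound the hitting probability by a union bound over the $T$ angular sectors, and control each term through the modified capacity machinery from Section~\ref{s3}. By rotational invariance of the construction of $E_T$, one may pass to the rotated frame in which $\theta=0$, and let $j_0$ be the sector index with $0\in I_{j_0}$; in this frame $G_t^\theta=t\,e^1+B_t$.

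First, write $E_T=\bigcup_{j=0}^{T-1}E_j$ and apply the union bound
\[
\mathbb{P}\bigl(G_t^\theta\text{ hits }E_T\bigr)\leq\sum_{j=0}^{T-1}\mathbb{P}\bigl(G_t^\theta\text{ hits }E_j\bigr).
\]
Each $E_j$ sits inside the sector $Q_j$, which in the rotated Cartesian frame is, up to bi-Lipschitz constants uniform in $T$ (the angular width of $Q_j$ is $2\pi/T$ and $Q_j$ lies at radius $\sim T^2$), a rectangle of radial extent $\sim T^2$ and transverse extent $\sim T$, translated transversely by an amount comparable to $|j-j_0|\,T$ in the circular-distance sense. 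This matches the setup of Proposition~\ref{trii} with scale parameter $T$ and offset $|\mathbf{i}|\sim|j-j_0|$, and gives
\[
\mathbb{P}\bigl(G_t^\theta\text{ hits }E_j\bigr)\lesssim\exp\!\bigl(-c(j-j_0)^2\bigr)\,\frac{C(E_j)}{T^2}
\]
for some $c>0$, where for $j$ in the ``backward'' half the bound is much stronger (exponentially small in $T^2$) because the drift pushes away from $Q_j$.

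Second, since each $E_j$ is built inside $Q_j$ by the sharp dyadic fractal construction used in the proof of Theorem~\ref{proj_} (scaled to the polar sector and with angular projection filling $I_j$), the same calculation that yielded sharpness there gives
\[
C(E_j)\lesssim\frac{T}{\log T}.
\]
Combining the two displays and summing the Gaussian factor in $j-j_0$, which has total mass $O(1)$, one obtains
\[
\delta_T\lesssim\frac{1}{T\log T}\sum_{j=0}^{T-1}\exp\!\bigl(-c(j-j_0)^2\bigr)\lesssim\frac{1}{T\log T}\longrightarrow 0
\]
uniformly in $\theta$, which is the claim.

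The main obstacle is the geometric reduction: Proposition~\ref{trii} and Theorem~\ref{proj_} are stated for Cartesian rectangles, whereas the $Q_j$ are curvilinear polar sectors. One must verify both that the kernel $K_\theta^-$ degrades only by a harmless constant when the target is bent along an arc of radius $T^2$, and that the sharp fractal example of Theorem~\ref{proj_}, when transplanted into $Q_j$, retains capacity of order $T/\log T$. Both points should follow from uniform bi-Lipschitz equivalence of $Q_j$ with a Cartesian $T^2\times T$ rectangle, whose constants are independent of $T$ because $2\pi/T$ is small and aspect ratios are preserved; handling the transition between the regime $|j-j_0|=O(1)$ (where the near-Gaussian estimate of Proposition~\ref{trii} is essentially sharp) and larger $|j-j_0|\lesssim T$ (where the more delicate form of the exponent is needed) is the only delicate bookkeeping.
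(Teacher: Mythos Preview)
Your approach is essentially the paper's: a union bound over the sectors $E_j$, the harmonic-measure/capacity representation with a Gaussian weight in the angular offset $|j-j_0|$, and the sharp fractal capacity bound $C(E_j)=o(T)$ from Theorem~\ref{proj_}. The paper organizes the argument as a two-step $\epsilon$-choice---first fix a cutoff $k_0$ so that the Gaussian tail $\sum_{k(j,\theta)>k_0}e^{-Ck^2}$ is small (this needs only the trivial bound $C_\theta(E_j)\lesssim T$), then for the finitely many near sectors with $k(j,\theta)\le k_0$ use $C_\theta(E_j)/T\to 0$---rather than your single uniform estimate. That split is not just cosmetic: for sectors at large angular offset the direction $\theta$ is substantially tilted relative to the fractal axis of $E_j$, and the bound $C_\theta(E_j)\lesssim T/\log T$ is not obviously uniform in $j$. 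The paper's organization sidesteps this because for large $k$ the Gaussian factor carries the day and only the trivial capacity bound is needed.

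There is also a dimensional slip. The example is in $d=2$ (the section begins ``We only treat $d=2$''), while Proposition~\ref{trii} is stated and proved in $d=3$. In two dimensions the free kernel decays like $|z-\xi|^{-1/2}e^{-|z-\xi|}$, so the analogue of Proposition~\ref{trii} reads
\[
\omega^-(0,E_j,\Omega)\ \sim\ \frac{C(E_j)}{T},
\]
not $C(E_j)/T^2$. With the correct denominator and $C(E_j)\lesssim T/\log T$ your final bound becomes $\delta_T\lesssim 1/\log T$ rather than $1/(T\log T)$; the conclusion $\delta_T\to 0$ is of course unaffected.
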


\smallskip\nt {\it Sketch of the proof.}\
Take an arbitrary $\theta\in\mathbb{T}$. For each interval $I_j$, let $k(j,\theta)$ be the total number of intervals $\{I_m\}$ between $\theta$ and $I_j$.
Estimating just like in (\ref{oe1}) and (\ref{oe2}), we have
\begin{equation}
\mathbb{P}\left( G^\theta_t{\rm \ hits \ }  \bigcup_{j:
k(j,\theta)>k_0} E_j\right)\lesssim e^{-CT^2}+\sum_{k(j,\theta) \geq k_0}
e^{-Ck^2(j,\theta)}\label{part1}
\end{equation}
On the other hand, for any fixed value $k(j,\theta)$,  the bounds
from the proof of  Theorem \ref{proj} apply as long as $T$ is large
enough and one has
\[
\lim_{T\to\infty} C_\theta(E_j)/T\to 0
\]
That, in turn, implies
\[
\mathbb{P}(G_t^\theta \, {\rm hits}\, E_j)\to 0, \,\, {\rm as}\,\,
T\to\infty
\]

 Then (\ref{part1}) finishes the proof. \hfill
$\Box$

\medskip
Now, it is enough to take $T_k, \ \lim_{k\to \infty} T_k=\infty$ with the property
\[
\sum_k \delta_{T_k}<1/2
\]
and consider $V\geq 0$ supported on $E=\bigcup\limits_k E_{T_k}$.
Then we have
\[
\sup_{\theta\in [0,2\pi)} \mathbb{P}\Bigl( te^{i\theta}+B_t  \ {\rm
hits} \  E \Bigr)< 1/2
\]
and thus $\sigma_{ac}(H)=\mathbb{R}_+$. Notice that by construction any ray issued from the origin intersects the support of $V$ infinitely many times.
\section*{Appendix A: the upper bounds on the harmonic measure}\label{s4}

In this Appendix, we  adjust the well-known Carleman estimates on
harmonic measure to our case. These bounds can be used to prove that
the amplitude $a_{x^0}=0$ and so they do not have immediate
consequences for the study of spectral types. Nevertheless, we prove
them to emphasize the ``parabolic nature" of the domains for which
the transition from $a_{x^0}=0$ to positive values occurs.

Consider the following problem. Let
$\Omega\subset\br^3$ be a bounded connected domain (but not
necessarily simply connected) with smooth boundary.
Assume also that $B(0, 1)\subset \Omega$. Then, define
$\Omega_R=\Omega\cap \{x: -1<x_1<R\}$ for $R$ large, and let
$\Gamma_R=\partial \Omega_R$. We consider the modified harmonic
measure $\omega(. ,A,\Omega_R)$ \eqref{e95} but with respect to the
bounded $\Omega_R$ and $A\subset \Gamma_R\cap \{x: x_1=R\}$. Recall
the probabilistic interpretation of $\omega(0,A,\Omega_R)$: it is
the probability of a random trajectory $G_t=0+t\cdot e^1+B_t$ to hit
the boundary $\Gamma_R$ for the first time at $A$. Above,  $B_t$ is
the three-dimensional Brownian motion. We are interested in the case
when $\Omega$ is elongated in the $e^1$-direction. To make the
explanation more intuitive, we denote the first component of the
vector $x=(x_1, x_2, x_3)\in\br^3$ by $t,\ t=x_1$. As before $x=(t,
x'), x'\in \br^2$.

We start with some notations. Let $\Theta$ be a ($x'$-planar)
bounded open set with piece-wise smooth boundary and
$$
\cw(\Theta)=\{f\in W^{1,2}(\Theta): f|_{\partial\Theta}=0, \ f\not=
0\}
$$
Consider the operator $-\Delta_D$ where $\Delta_D$ is the Laplacian
on $\Theta$ with
 Dirichlet boundary condition on $\pt\Theta$. It is well known
 that its spectrum  is discrete,
$\sigma(-\Delta_D)=\{\lambda_1<\lambda_2\leq \lambda_3\leq
\ldots\}$. The value $\lambda_1>0$ is called principal eigenvalue
and we denote it by $\lambda=\lambda_\Theta$. The classical
Courant-Hilbert principle \cite[Sect. 4.2, 6.7.9]{ch}
 says that
\begin{equation}
\min_{f\in\cw(\Theta)}\frac{\int_\Theta |\nn f|^2 dy'}{\int_\Theta
f^2 dy'}=\lambda_\Theta
\end{equation}

Denote the section of $\Omega_R$ by the plane $\{x: t=s\}$ by
$\Theta_s$. Clearly, $\Theta_s$ is a union of a finite number of
connected components $\Theta^j_s, \ j=1,\dots,N$.  Let
$$
\lambda(s)=\min_{j} \lambda_{\Theta^j_s}
$$
Then, for $f\in \cw(\Theta_s)$,
\begin{eqnarray}\label{e99}
\int_{\Theta_s}|\nn' f|^2 dy'&\geq&\sum_j \int_{\Theta^j_s}|\nn' f|^2 dy'\geq \lambda \sum_j \int_{\Theta^j_s}f^2 dy'\\
&=&\lambda \int_{\Theta_s}f^2 dy' \nonumber
\end{eqnarray}
where $\nn'$ is the gradient with respect to $x_2,x_3$.

\begin{theorem}\label{th3}  We have
\begin{equation}
\omega(0,A,\Omega_R)\leq C_1 |A|^{1/2}e^R\left(1+\int_0^R
\exp\lt(2\int_0^t \sqrt{\lambda(u)+1}\, du\rt)
dt\right)^{-1/2}\label{ohho1}
\end{equation}
In particular,
\begin{equation}\label{ohho2}
\omega(0,A, \Omega_R)\leq C_2
|A|^{1/2}\exp\left(R-\int_0^{R-1}\sqrt{\lambda(u)+1}\, du\right)
\end{equation}
\end{theorem}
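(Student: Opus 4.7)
My plan is to reduce the estimate on the modified harmonic measure to a one-dimensional Sturm-type differential inequality on cross-sections, in the spirit of Carleman. Let $u$ be the solution of $L^- u = 0$ in $\Omega_R$ with $u = \chi_A$ on $A$ and $u = 0$ on $\Gamma_R\setminus A$, so $u(0) = \omega(0,A,\Omega_R)$. The substitution $\psi := e^{x_1}u$ converts the equation into the modified Helmholtz equation $(-\Delta+1)\psi = 0$ in $\Omega_R$, with $\psi$ vanishing on the lateral boundary of $\Omega_R$ and on $\Omega\cap\{x_1=-1\}$, and $\psi = e^R\chi_A$ on $\{x_1=R\}$. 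Since $\psi(0) = u(0)$, it suffices to control $|\psi(0)|$. Introduce the cross-sectional energy $h(t) = \int_{\Theta_t}\psi(t,x')^2 \, dx'$ for $t\in(-1,R)$; note $h(-1)=0$ and $h(R)=e^{2R}|A|$.

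Because $\psi$ vanishes on $\partial\Theta_t$, Reynolds' transport formula picks up no boundary contribution, and yields $h'(t) = 2\int\psi\partial_t\psi\,dx'$, $h''(t) = 2\int\bigl[(\partial_t\psi)^2 + \psi\partial_t^2\psi\bigr]dx'$. Substituting $\partial_t^2\psi = \psi - \Delta'\psi$ from the equation and integrating by parts in $x'$ (using again $\psi|_{\partial\Theta_t}=0$) gives $h''(t) = 2\int_{\Theta_t}\bigl[(\partial_t\psi)^2 + \psi^2 + |\nabla'\psi|^2\bigr]dx'$. The Courant--Hilbert principle \eqref{e99} bounds $\int|\nabla'\psi|^2 \geq \lambda(t)h(t)$, while Cauchy--Schwarz gives $\int(\partial_t\psi)^2 \geq (h'(t))^2/(4h(t))$. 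Setting $\phi := h^{1/2}$ and combining, a direct computation produces the clean Sturm-type inequality
\[
\phi''(t) \geq (\lambda(t)+1)\,\phi(t), \quad t\in(-1,R),
\]
with $\phi(-1)=0$ and $\phi(R) = e^R|A|^{1/2}$.

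Now I exploit this inequality. Since $\phi\geq 0$ is convex with $\phi(-1)=0$, its derivative $\phi'$ is nonnegative throughout $(-1,R]$, so $h' = 2\phi\phi'\geq 0$. Expanding $h''=2(\phi')^2 + 2\phi\phi''\geq 2(\phi')^2 + 2(\lambda+1)\phi^2$ and applying AM--GM yields the linear differential inequality $h''(t)\geq 2\sqrt{\lambda(t)+1}\,h'(t)$. Setting $\tau(t) := \int_0^t\sqrt{\lambda(u)+1}\,du$, this means $h'(t)e^{-2\tau(t)}$ is nondecreasing, hence $h'(t)\geq h'(0)e^{2\tau(t)}$ for $t\geq 0$ and $h'(t)\leq h'(0)$ for $t\in[-1,0]$. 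Integrating the first inequality from $0$ to $R$ gives $h(R)-h(0)\geq h'(0)\Phi$ with $\Phi := \int_0^R e^{2\tau(t)}dt$, while integrating the second from $-1$ to $0$ with $h(-1)=0$ gives $h(0)\leq h'(0)$. Combining the two, $h(0)(1+\Phi) \leq h(R) = e^{2R}|A|$. Since $B(0,1)\subset\Omega$, interior regularity for $(-\Delta+1)\psi = 0$ yields $|\psi(0)|^2 \lesssim \int_{B(0,1/2)}\psi^2\,dx \lesssim \int_{-1/2}^{1/2}h(t)\,dt$; re-running the same ODE analysis based at an arbitrary $t_0\in[-1/2,1/2]$ shows this integral is dominated by $C\,e^{2R}|A|/(1+\Phi)$, giving \eqref{ohho1}. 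Finally, \eqref{ohho2} follows from $1+\Phi \geq \int_{R-1}^R e^{2\tau(t)}dt \geq e^{2\tau(R-1)}$ since $\tau$ is nondecreasing.

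The main technical hurdle is the third paragraph: one must exploit \emph{both} the exponential growth of $h'$ on $[0,R]$ and the anchoring $h(-1)=0$ in order to land the factor $1+\Phi$ rather than merely $\Phi$. The critical observation is the AM--GM reduction from the quadratic inequality on $\phi$ to a linear one on $h'$, which makes Gronwall-type integration directly applicable.
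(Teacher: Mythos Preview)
Your proof is correct and follows essentially the same Carleman approach as the paper: your substitution $\psi=e^{x_1}u$ at the outset is equivalent to the paper's later substitution $\phi=\varphi e^{-2t}$ (your $h$ coincides with the paper's $\varphi$), and your AM--GM derivation of $h''\geq 2\sqrt{\lambda+1}\,h'$ is a slightly cleaner variant of the paper's completing-the-square manipulation that produces the identical differential inequality. The only notable difference is the endgame: the paper applies Harnack's inequality to the nonnegative solution $u$ to pass directly from the bound on $h(0)=\int_{\Theta_0}u^2$ to $u(0)$, whereas your interior $L^2\to L^\infty$ estimate forces you to control $\int_{-1/2}^{1/2}h(t)\,dt$ and then ``re-run'' the ODE argument at shifted basepoints. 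That last step does work, but it tacitly uses that $\lambda(t)$ is uniformly bounded on $[-1/2,1/2]$ (a consequence of $B(0,1)\subset\Omega$) to keep the comparison between $\Phi$ and $\int_{t_0}^R e^{2(\tau(t)-\tau(t_0))}\,dt$ under control; since $u\geq 0$, invoking Harnack as the paper does would have spared you this extra bookkeeping.
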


\begin{proof}
The proof  essentially follows Carleman \cite{carl}, Haliste
\cite{ha}; see also \cite[Appendix G]{GM}. Recall that
$\Omega_s=\Omega_R \cap \{x: t<s\}$. The function $u(z)=\omega(z,
A,\Omega_R)$ satisfies
\[
\Delta u+2u_{x_1}=0, \quad u|_{\Gamma}=\chi_A
\]
Consider
\begin{equation}\label{e991}
\phi(t)=\int_{\Theta_t} u^2(t,y')dy', \quad A(t)=\int_{\Omega_t}
|\nabla u(s,y')|^2 ds dy'
\end{equation}
Applying Gauss-Ostrogradsky formula to $u\cdot \nn u$ on $\Omega_t$,
we have the following relations
\[
A(t)=\int_{\Theta_t} u(t,y')u_t(t,y')dy'+\int_{\Theta_t}
u^2(t,y')dy'=\phi'(t)/2+\phi(t)
\]
and
$$
A'(t)=\int_{\Theta_t} |\nn u(t,y')|^2 dy'= \int_{\Theta_t}
(u^2_{x_1}(t,y') + u^{2}_{x_2}(t,y')+u^2_{x_3}(t,y')) dy'
$$
Recalling the definition of $\lambda$ and \eqref{e99}, we see
\begin{eqnarray*}
\int_{\Theta_t}(u^2_{x_2}(t,y')+u^2_{x_3}(t,y')) dy' &=& \int_{\Theta_t} |\nn' u|^2 dy'\\
&\geq& \lambda(t)\int_{\Theta_t} u^2 dy' =\lambda(t)\phi(t)
\end{eqnarray*}
By Cauchy-Schwarz,
\[
\phi'(t)\leq 2\sqrt{\phi(t)}
\left(\int_{\Theta_t}u^2_{x_1}(t,y')dy'\right)^{1/2}
\]
Thus, we have the following differential inequality
\[
\phi''+2\phi'\geq 2\lambda(t)\phi+\frac{{\phi'}^2}{2\phi}
\]

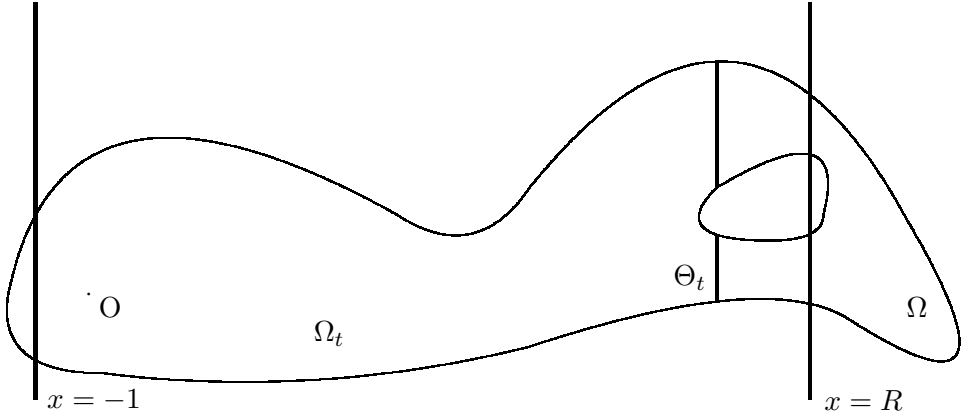
\begin{figure}

\begin{center}
\begin{picture}(200,200)(70,0)
\renewcommand{\qbeziermax}{1000}

\qbezier(5,40)(0,10)(40,10) \qbezier(5,40)(25,140)(150,70)

\qbezier(150,70)(180,50)(200,80) \qbezier(200,80)(280,180)(340,70)

\qbezier(200,20)(290,50)(320,30)

\qbezier(320,30)(390,-15)(340,70)

\qbezier(40,10)(120,0)(200,20)
\put(125,25){\makebox(0,0)[cc]{$\Omega_t$}}
\put(345,35){\makebox(0,0)[cc]{$\Omega$}}

\qbezier(270,80)(320,110)(310,70)

\qbezier(270,80)(250,60)(290,60)

\qbezier(290,60)(310,60)(310,70)

\linethickness{0.5mm} \put(305,0){\line(0,1){150}}
\linethickness{0.5mm} \put(15,0){\line(0,1){150}}
\put(325,0){\makebox(0,0)[cc]{$x=R$}}
\put(37,0){\makebox(0,0)[cc]{$x=-1$}}
\put(35,40){\circle*{1}} \put(43,35){\makebox(0,0)[cc]{O}}

\linethickness{0.3mm} \put(270,37){\line(0,1){25}}
\linethickness{0.3mm} \put(270,80){\line(0,1){47}}

\put(260,46){\makebox(0,0)[cc]{$\Theta_t$}}

\end{picture}
\end{center}
\caption{ Carleman's method ($d=2$).}\label{f3}
\end{figure}

\nt Make the substitution $\phi=\vp e^{-2t}$. Then,
$$
\vp''\geq 2(\lambda +1)\vp+ \frac{\vp'^2}{2\vp}
$$
This inequality implies $\varphi''>0$ for $t\in (-1,R)$ and,  since
$\varphi(-1)=0$ and $\varphi(R)>0$, we must have $\varphi'(t)>0$.
After multiplying by $2/\vp$ and complementing to a full square, we
obtain
$$
\lt(\frac{\vp''}{\vp'}\rt)^2 -
\lt(\frac{\vp''}{\vp'}-\frac{\vp'}{\vp}\rt)^2\geq 4(\lambda +1)
$$
Since $\varphi''/\varphi'>0$, we have $\vp''\geq 2\sqrt{\lambda
+1}\, \vp'$. Set $\mu=2\sqrt{\lambda+1}$ and
$$
\psi(t)=\int^t_{-1}\exp\lt(\int^u_{-1} \mu(s)ds \rt) du
$$
for the sake of brevity. One has $\psi''=\mu \psi'$. Hence,
$$
\left(\log
\frac{\vp'}{\psi'}\right)'=\frac{\vp''}{\vp'}-\frac{\psi''}{\psi'}\geq
0
$$
so the function $\vp'/\psi'$ is non-decreasing and, for $-1<x<t$
$$
\vp'(x)\psi'(t)\leq \vp'(t)\psi'(x)
$$
Integrating in  $x$ and $t$ from $-1$, we see that
$\vp(x)\psi(t)\leq \vp(t)\psi(x)$. Putting this a bit differently,
$$
\phi(x)\leq \frac{\psi(x)}{\psi(t)}\phi(t) e^{2(t-x)}
$$
Since $\phi(R)=|A|$, we have
$$
\phi(0)\lesssim |A| e^{2R} \left(1+\int_0^{R}\exp \lt(2\int^t_0
\sqrt{\lambda +1} ds\rt) dt \right)^{-1}
$$
Recalling \eqref{e991} and Harnack's principle,
$$
\omega(0,A,\Omega_R)\lesssim |A|^{1/2} e^R  \left(1+\int_0^{R}\exp
\lt(2\int^t_0 \sqrt{\lambda +1} ds\rt) dt \right)^{-1/2}
$$
As for the second inequality claimed in the theorem, we only need to
use
\begin{eqnarray*}
\int_0^R \exp \lt(2\int^t_0  \sqrt{\lambda +1} ds  \rt)dt\ge  \exp
\lt (2\int^{R-1}_0 \sqrt{\lambda +1} ds\rt)
\end{eqnarray*}
\end{proof}

\nt{\bf Remark.} The case of a wide long strip shows that the first
estimate is essentially sharp.

\medskip
\nt{\bf Remark.} For $d=2$, the bound on the harmonic measure is the
same but the principal eigenvalue equals to
$$
\l(t)=\lt(\frac\pi{l(t)}\rt)^2
$$
where $l(t)$ is the length of the longest interval among  intervals
forming  $\Theta_t$. If $d=3$ and $\Theta_t$ is simply-connected,
the principal eigenvalue is controlled by the inradius:
$\lambda(t)\sim r^{-2}(t)$. In general, if $d>3$ or $d=3$ and
$\Theta_t$ is not simply-connected, one can use $\lambda(t)\geq
h^2(t)/4$ where $h(t)$ is the Cheeger's constant of $\Theta_t$
\cite{grie}. Unfortunately, it is neither easy to compute nor
to estimate in practice.

\section*{Appendix B: the lower bounds on harmonic measure}

In this section, we will estimate the harmonic measure from below.
Recall that  a set $A\subset \br^d$ is Steiner symmetric with respect to a $(d-1)$-dimensional plane $p$, if $s\cap A$ is an interval symmetric w.r. to $p$ for any straight line $s$ perpendicular to $p$ provided $s\cap A\not =\emptyset$, see P\'olya-Szeg\H o \cite{ps}. We say that $A$ is Steiner symmetric (without mentioning $p$) if $A$ is Steiner symmetric w.r. to coordinate planes.

Let $\Theta$ be a Steiner symmetric domain with piece-wise smooth boundary lying in the $x'$-plane, and
 $\ti \Theta_t=\ti k(t) \cdot \Theta$, where $\ti k :\br\to \br_+ $ is a monotonically decreasing smooth function, $\ti k(0)=1$. Denote by $\lambda=\lambda(0)$ the principal eigenvalue of the operator $-\Delta_D$ on $\Theta$. Let
$\psi$ be the corresponding eigenfunction normalized by $\psi(0,0)=1$.

\begin{lemma}
Let $\Omega=\{(t, x'): 0<t<\infty, \ x'\in \ti \Theta_t\}$ and $u$  solve the following equation
\[
u_t=\frac12 \Delta u
\]
on $\Omega$. Above, the boundary conditions are \ \ $u(0, x')=1, x'\in\Theta$, and $u|_{\partial\ti\Theta_t}=0, t>0$.
Then
\[
u(t,0,0)\geq \exp\left(-\frac{\lambda}{2}\int_0^t \frac{d\tau}{\ti k^2(\tau)}\right)
\]
\end{lemma}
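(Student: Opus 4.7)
The plan is to construct an explicit parabolic subsolution that realizes the desired lower bound and then invoke the comparison principle. Motivated by the separation-of-variables solution $e^{-\lambda t/2}\psi$ one would obtain if $\ti k\equiv 1$, I would try the ansatz
\[
v(t,x')=\eta(t)\,\psi\!\lt(\frac{x'}{\ti k(t)}\rt),\qquad \eta(t)=\exp\!\lt(-\frac{\lambda}{2}\int_0^t\frac{d\tau}{\ti k^2(\tau)}\rt).
\]
This is well-defined on $\Omega$, because the dilation $x'\mapsto x'/\ti k(t)$ maps $\ti\Theta_t$ onto $\Theta$, and it vanishes on the lateral boundary $\pt\ti\Theta_t$. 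Since $\psi(0,0)=1$, we have $v(t,0,0)=\eta(t)$, so it suffices to prove $u\geq v$ on $\Omega$. By the parabolic comparison principle, this will follow from three ingredients: the pointwise initial inequality $v(0,\cdot)\leq u(0,\cdot)=1$ on $\Theta$, matching (vanishing) lateral data, and the subsolution inequality $v_t\leq\tfrac12\Delta v$.

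Setting $y'=x'/\ti k(t)$ and using $\Delta\psi=-\lambda\psi$, a direct computation shows that the two contributions $\eta'(t)\psi(y')$ and $\tfrac12\Delta v$ both equal $-\frac{\lambda}{2\ti k^2(t)}v$ and therefore cancel in the expression for $v_t-\tfrac12\Delta v$. The only surviving term comes from differentiating $y'$ in time, and it reads
\[
v_t-\tfrac12\Delta v\;=\;-\frac{\ti k'(t)}{\ti k(t)}\,\eta(t)\,\nabla\psi(y')\cdot y'.
\]
Since $\ti k$ is positive and decreasing, the prefactor $-\ti k'/\ti k$ is nonnegative, so the subsolution inequality reduces to the pointwise monotonicity
\[
\nabla\psi(y')\cdot y'\leq 0\qquad\text{on }\Theta.
\]

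The geometric input supplying both this monotonicity and the bound $\psi\leq 1$ needed for the initial data is the Steiner symmetry of $\Theta$. By the P\'olya--Szeg\H{o} inequality, Steiner symmetrization in any coordinate direction does not increase the Dirichlet integral while preserving the $L^2$ norm; applied to $\psi$ on the already Steiner symmetric domain $\Theta$, together with uniqueness of the positive minimizer of the Rayleigh quotient, this forces $\psi$ to coincide with its own Steiner symmetrization in each coordinate direction. Equivalently, $\psi$ is even and nonincreasing in $|x_i'|$ for every $i$, so $y_i'\,\pt_i\psi(y')\leq 0$ and summation over $i$ gives $\nabla\psi(y')\cdot y'\leq 0$. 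The same monotonicity places the maximum of $\psi$ at the origin, so the normalization $\psi(0,0)=1$ yields $0\leq\psi\leq 1$ on $\Theta$, and in particular $v(0,x')=\psi(x')\leq u(0,x')$. Applying the parabolic maximum principle on the truncations $\Omega\cap\{t<T\}$ and then letting $T\to\infty$ gives $u\geq v$ on $\Omega$; evaluating at $x'=0$ produces $u(t,0,0)\geq\eta(t)\psi(0,0)=\eta(t)$, which is the claimed bound. The main nonroutine step is the symmetric-rearrangement argument that yields $\nabla\psi\cdot y'\leq 0$; once this pointwise monotonicity is in hand, the rest of the proof is a standard subsolution-plus-maximum-principle exercise.
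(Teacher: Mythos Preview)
Your proof is correct and follows essentially the same route as the paper: the trial function, the computation of $v_t-\tfrac12\Delta v$, and the appeal to the parabolic maximum principle are identical. The only difference is in the justification of $y'\cdot\nabla\psi\leq 0$: the paper simply cites Haliste \cite{ha}, Lemma~7.3, whereas you supply the standard Steiner-rearrangement argument (P\'olya--Szeg\H{o} plus simplicity of the principal eigenvalue) that underlies that lemma.
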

\begin{proof}
Take the trial function
\begin{equation}\label{snizu1}
\tilde{u}(t,x')=\psi\lt(\frac{x'}{\ti k(t)}\rt)\, \exp\left(-\frac \lambda{2}
\int_0^t \frac{d\tau}{\ti k^2(\tau)}\right)
\end{equation}
Since $\psi=0$ on $\pt\Theta$, the function $\ti u(t, .)$ vanishes on $\partial \ti\Theta_t$ for any $t$. Furthermore, we have by \cite{ha}, Lemma 7.3, that  $x'\cdot\nabla \psi\leq 0$ and, in particular, $\psi$ has its maximum at the origin. Consequently,  $\tilde u(t, .)$ also attains its maximum at the origin (of the $x'$-plane); the maximum is easy to compute using \eqref{snizu1}. Moreover,
\[
\frac 12 \Delta\tilde u -\tilde u_t=\frac{\ti k'}{\ti k^2}\,
(x'\cdot \nabla \psi)\, \exp\left(-\frac{\lambda}{2} \int_0^t
\frac{d\tau}{\ti k^2(\tau)}\right) \geq 0
\]
By the maximum principle for (parabolic) subharmonic functions
(\cite{landis}, Theorem~2.1), we get $\ti u\leq u$,  and the
bound is proved.
\end{proof}

Let us now change the set up a little: for $\Theta$ as above, define
$\Theta_t=k(t) \cdot \Theta$, where $k: \br\to\br_+$ is monotonically increasing smooth function, $k(0)=1$.

\begin{theorem} We have
\begin{equation}
\omega(0,\Theta_R, \Omega_R)\geq C(\delta)
\exp\left(-\frac{\lambda}{2}\int_2^R
\frac 1{k^2(t-t^\delta)} dt\right)\label{snizu}
\end{equation}
where $\lambda=\lambda(0)$ is the principal eigenvalue of  $-\Delta_D$ on $\Theta$ and $\frac12 <\delta<1$ is a fixed parameter.
\end{theorem}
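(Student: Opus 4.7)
The plan is to reduce the problem to the setting of the preceding lemma in two steps: first, isolate the probabilistic event that the first coordinate $X_1(t) = t + B^{(1)}_t$ of $G_t$ stays close to its drift-mean $t$; then apply a time reversal which converts the resulting stay-in-tube estimate for $B'$ (with an \emph{expanding} cross-section, because $k$ is increasing) into one with a \emph{contracting} cross-section, to which the preceding lemma applies essentially verbatim.

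Write $G_t = (X_1(t), B'_t)$, where $B^{(1)}$ is a one-dimensional Brownian motion, $B'$ is an independent two-dimensional Brownian motion, and $X_1(t) = t + B^{(1)}_t$. Since $\delta > 1/2$, the law of the iterated logarithm produces $C(\delta) > 0$ such that the event $A = \{|B^{(1)}_s| \leq s^\delta/2 \text{ for every } s \geq 1\}$ has $\bp(A) \geq C(\delta)$. On $A$, $X_1(s) \in [s - s^\delta/2,\, s + s^\delta/2]$ for $s \geq 1$, so monotonicity of $k$ yields $k(X_1(s))\Theta \supset D(s) := k(\max(s - s^\delta/2, 0))\Theta$, and the hitting time $\tau_R := \inf\{s: X_1(s) = R\}$ is at most $T := R + R^\delta$. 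Because $D(\,\cdot\,)$ is expanding, staying in $D(s)$ throughout $[0, T]$ is more restrictive than doing so through $[0, \tau_R]$; hence by the independence of $B^{(1)}$ and $B'$,
\[
\omega(0, \Theta_R, \Omega_R) \geq C(\delta)\cdot \bp\lp B'_s \in D(s) \text{ for every } s \in [0, T]\rp.
\]

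To estimate the stay-in-tube probability from below, write $p(s, y)$ for the probability that a BM started at $y \in D(s)$ at time $s$ remains in $D(\,\cdot\,)$ up to $T$; then $p$ solves the backward heat equation with terminal datum $\chi_{D(T)}$ and zero lateral data. Its time reversal $\ti p(s, y) := p(T - s, y)$ satisfies the forward heat equation $\ti p_s = \tfrac12 \Delta_y \ti p$ with initial datum $\chi_{D(T)}$ and vanishes on $\{y \in \pt D(T - s)\}$, while $p(0, 0) = \ti p(T, 0)$. The reversed cross-section $\ti k(s) := k(T - s - (T-s)^\delta/2)$ is \emph{decreasing} in $s$ on $[0, T - s_\delta]$ for some $s_\delta = s_\delta(\delta)$, since $\delta < 1$ forces $(T-s)^{\delta-1}$ to be small there. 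This is exactly the setting of the preceding lemma: the trial function $\psi(y/\ti k(s))\exp(-\tfrac{\l}{2} \int_0^s d\tau/\ti k^2(\tau))$, with $\psi$ normalized by $\psi(0) = 1$, is a parabolic subsolution bounded by $\chi_{D(T)}$ at $s = 0$ and vanishing on the lateral boundary, so the maximum principle gives $\ti p(T - s_\delta, 0) \geq \exp(-\tfrac{\l}{2} \int_0^{T - s_\delta} d\tau/\ti k^2(\tau))$. Propagating $\ti p$ forward across the bounded interval $[T - s_\delta, T]$ by standard heat-kernel lower bounds introduces only an $s_\delta(\delta)$-dependent multiplicative factor, and the substitution $t = T - \tau$ rewrites the exponent as $\int_{s_\delta}^{T} dt/k^2(t - t^\delta/2)$.

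The main technical obstacle is shepherding the several boundary adjustments, each of which contributes only bounded factors absorbed into $C(\delta)$: (i) verifying $\ti k' \leq 0$ on $[0, T - s_\delta]$, which follows directly from $\delta < 1$; (ii) propagating $\ti p$ through the bounded interval near $s = T$; (iii) treating the initial segment $s \in [0, 2]$, where $X_1$ starts on $\pt\Omega_R$; and (iv) converting $\int dt/k^2(t - t^\delta/2)$ (with integration up to $T = R + R^\delta$) into the stated $\int_2^R dt/k^2(t - t^\delta)$. The last point is handled by re-running the first step with a slightly smaller exponent $\delta' \in (1/2, \delta)$, so that $s^{\delta'}/2 \leq s^\delta$ once $s$ is large; monotonicity of $k$ then yields $k(t - t^{\delta'}/2) \geq k(t - t^\delta)$, and the extraneous boundary pieces of the integration interval contribute only an additive $O(1)$, which can likewise be absorbed.
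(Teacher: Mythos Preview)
Your approach is the same as the paper's: split $G_t=(t+B^{(1)}_t,\,B'_t)$, use the law of the iterated logarithm to condition on the first coordinate staying within $t^\delta$ of its mean, reduce to a stay-in-tube estimate for the transverse Brownian motion $B'$, time-reverse so that the cross-sections shrink, and invoke the preceding lemma. The paper is much terser --- it writes only ``apply the lemma with $\tilde k(t)=k(t-t^\delta)$ and change variables $t\mapsto R-t$'' --- but the content is identical, and you are considerably more explicit about why the time reversal is needed and about the role of the parabolic maximum principle.

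One step does not go through as written. In your point (iv) you assert that the portion of the integral over $t\in[R,T]$, with $T\approx R+R^{\delta'}$, contributes only an additive $O(1)$. That is not true for a general increasing $k$: this tail is of order $R^{\delta'}/k^2(R-R^{\delta'})$, which is unbounded whenever $k$ grows slower than $t^{\delta'/2}$ (in particular whenever $k$ is bounded). Your $\delta'<\delta$ device repairs the comparison of \emph{integrands} on $[2,R]$ but does nothing for the \emph{upper limit}. The paper is equally silent on this point --- it writes ``$\forall t>2$'' and then integrates only over $(0,R)$ --- so you have not introduced a new error, but the gap is real. A clean repair is to intersect your event $A$ with $\{B^{(1)}_R\ge 0\}$: since $A$ is symmetric under $B^{(1)}\mapsto -B^{(1)}$ this costs only a factor $1/2$, and on $A\cap\{B^{(1)}_R\ge 0\}$ one has $X_1(R)\ge R$ and hence $\tau_R\le R$. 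It then suffices that $B'$ remain in $D(s)$ for $s\in[0,R]$, the time-reversed integral runs exactly over $[2,R]$, and no extraneous tail appears.
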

\begin{proof}
Recall the probabilistic interpretation of $\omega(0,\Theta_R,
\Omega_R)$. We write $G_t=t\cdot e^1+B(t)=(t+B_1(t), B'(t))$ and $B_j$ are
independent one-dimensional Brownian motions (warning: $B'(t)=(B_2(t), B_3(t))$, and not a derivative).  Then,
$$
\omega(0,\Theta_R, \Omega_R)\gtrsim
\mathbb{P}_{B'}\Bigl(B'(t)\in \Theta_{k(t-t^\delta)},
\forall t>2\Big| E\Bigr)\, \mathbb{P}_{B_1}(E)
 $$
where $E$ is an event $\{|B_1(t)|<t^\delta$ for all $t>2\}$. By the
law of iterated logarithm, $\mathbb{P}_{B_1}(E)>0$ and thus the problem is
reduced to estimating the solution for the parabolic equation from the above lemma.
Apply it to the function $\tilde k(t)=k(t-t^\delta)$ and make the change of variable $t\mapsto R-t, t\in (0,R)$; the theorem follows.
\end{proof}

\nt
{\bf Remark.} Comparison of \eqref{snizu} to (\ref{ohho2}) shows that the estimates
are sharp in a certain sense for the Steiner symmetric
domains monotonically opening at infinity. The case when the scaling function $k$
is decreasing can be handled by adjusting the methods of \cite{ha},
Theorem 7.1.

\medskip
If the function $k$ is not monotonic, proving the lower bounds
for the harmonic measure is an interesting problem.

The estimate that we just obtained is not so sharp if one tries to
prove that the harmonic measure is positive uniformly in $R$. For
example, it does not allow one to recover the law of iterated logarithm.
Instead, one can use estimates from Theorem \ref{eas}. As an
alternative to this method, we suggest the following approach.

We want to obtain the bound from below on $\omega(0,A,\Omega_R)$
which is uniform in $R$. Let us first consider domains embedded in
$\br^2$, i.e. $\Omega=\{(t, x) : t\in (-1,\infty), |x|<\theta(t)\}$
where $x$ is scalar and   $\theta$ is a positive smooth function.
The boundary of $\Omega$ is denoted by $\Gamma$. Let $G_t=(t+B_1(t),
B_2(t))$, where $B_j$ are independent one-dimensional Brownian
motions. We now want to find the conditions on $\theta$ which
guarantee that $\mathbb{P}(G_t \text{ does not hit } \Gamma\ \forall
t)>0$.

For this purpose, we will use the argument from the proof of the law
of iterated logarithm \cite{Yuval}, Theorem 5.1.  Take any
$\epsilon>0$ and $q>1$ and define
\begin{eqnarray*}
C_q=2\log q, \qquad && l_n=\sqrt{(C_q+\epsilon)q^{n+1}\log_q n},
\end{eqnarray*}
For a given $q$, we introduce the following characteristic
parameters of $\theta$:
\begin{eqnarray*}
 \kappa_n=\min_{[q^{n},q^{n+1}]} \theta(t)  &&
\end{eqnarray*}
where $n=1,2,\ldots$.
\begin{theorem} If
\[
\sum_n \frac{q^{n/2}}{\kappa_n} \exp\Bigl(
-\frac{\kappa_n^2}{2q^{n+1}}\Bigr)<\infty
\]
then  \ $\mathbb{P}(G_t \text{ does not hit } \Gamma\ \forall t)>0$.
\label{below}
\end{theorem}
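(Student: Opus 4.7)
The strategy is to mimic the classical Borel--Cantelli proof of the law of the iterated logarithm, combined with the strong Markov property at a large time. For each $n$ I would introduce the ``bad'' events
\[
E_n=\{\sup_{t\in[q^n,q^{n+1}]}|B_2(t)|\ge \kappa_n\},\qquad F_n=\{\sup_{t\in[q^n,q^{n+1}]}|B_1(t)|\ge l_n\},
\]
and bound each of them by the reflection principle together with the Gaussian tail estimate $\mathbb{P}(|B(T)|\ge a)\lesssim (T^{1/2}/a)\,e^{-a^2/(2T)}$. This yields $\mathbb{P}(E_n)\lesssim (q^{n/2}/\kappa_n)\exp(-\kappa_n^2/(2q^{n+1}))$, which is exactly the term appearing in the hypothesis, so $\sum_n \mathbb{P}(E_n)<\infty$. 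The choice $C_q=2\log q$ inside $l_n$ is calibrated so that an analogous computation gives $l_n^2/(2q^{n+1})=(1+\epsilon/(2\log q))\log n$ and hence $\mathbb{P}(F_n)=O(n^{-1-\epsilon'}/\sqrt{\log n})$ with $\epsilon'=\epsilon/(2\log q)>0$, so $\sum \mathbb{P}(F_n)<\infty$ as well.

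Next I would fix $N_0$ so large that $\sum_{n\ge N_0}(\mathbb{P}(E_n)+\mathbb{P}(F_n))<1/4$, put $T_0=q^{N_0}$, and let $A=\bigcap_{n\ge N_0}(E_n^c\cap F_n^c)$, so that $\mathbb{P}(A)>3/4$. On $A$, whenever $t\in[q^n,q^{n+1}]$ with $n\ge N_0$, we have $|B_1(t)|\le l_n$, and so the horizontal coordinate $t+B_1(t)$ lies in the slightly enlarged interval $[q^n-l_n,q^{n+1}+l_n]\subset [q^{n-1},q^{n+2}]$ for $N_0$ large (assuming $q\ge 2$). Thus, after the harmless replacement $\kappa_n\rightsquigarrow \tilde\kappa_n:=\min_{[q^{n-1},q^{n+2}]}\theta$, which costs only a bounded shift of the index in the summability hypothesis, the event $A$ forces $|B_2(t)|<\theta(t+B_1(t))$ for every $t\ge T_0$.

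To cover the initial segment $[0,T_0]$, let $m=\min_{[-1,T_0+1]}\theta>0$ and consider the positive-probability event
\[
D=\bigl\{\sup_{[0,T_0]}|B_1(t)|\le 1,\ \sup_{[0,T_0]}|B_2(t)|\le m/2\bigr\}.
\]
On $D$ the path $G_t$ stays inside $\Omega$ for $t\in[0,T_0]$, and $B(T_0)$ lies in a fixed compact set $K_0$. Apply the strong Markov property at $T_0$: conditionally on $B(T_0)=(x_0,y_0)\in K_0$ the future process is a shifted Brownian motion, and since $|y_0|=O(1)$ only changes the Gaussian tails bounding $\mathbb{P}(E_n)$ and $\mathbb{P}(F_n)$ by an $O(1)$ factor, one may enlarge $N_0$ if necessary so that the conditional probability of $A$ is $\ge 1/2$ uniformly in starting points in $K_0$. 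Consequently
\[
\mathbb{P}(G_t\in\Omega\ \forall t\ge 0)\ge \mathbb{P}(D\cap A)=\mathbb{E}[\chi_D\,\mathbb{P}(A\mid \mathcal{F}_{T_0})]\ge \tfrac{1}{2}\mathbb{P}(D)>0,
\]
which is the desired conclusion. The main subtlety I anticipate is the mismatch between the ``time'' variable $t$ and the spatial coordinate $t+B_1(t)$: the hypothesis is phrased via $\kappa_n=\min_{[q^n,q^{n+1}]}\theta$, whereas the drifted path lives on a slightly larger interval. This is handled by the replacement $\kappa_n\mapsto\tilde\kappa_n$; verifying that this does not destroy the summability assumption is routine but is the one place where some care is needed.
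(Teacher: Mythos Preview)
Your overall strategy---geometric decomposition of time, reflection principle, Gaussian tails, and a Borel--Cantelli argument---is exactly the approach the paper takes, and your handling of the initial segment via the event $D$ together with the strong Markov property is a legitimate (and arguably cleaner) alternative to the paper's device of arguing by contradiction with the shifted process $G'_t=(T+t+B_1(t),B_2(t))$.

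However, the step you yourself flag as ``the one place where some care is needed'' does not go through as written. The replacement $\kappa_n\rightsquigarrow\tilde\kappa_n=\min(\kappa_{n-1},\kappa_n,\kappa_{n+1})$ is \emph{not} merely a bounded index shift in the hypothesis, because the Gaussian bound for $\sup_{[0,q^{n+1}]}|B_2|$ carries the time horizon $q^{n+1}$ in the exponent. After your substitution the relevant tail is
\[
\frac{q^{n/2}}{\kappa_{n-1}}\exp\Bigl(-\frac{\kappa_{n-1}^2}{2q^{n+1}}\Bigr),
\]
whereas the $(n{-}1)$-st term of the hypothesis has $2q^{n}$ in the denominator of the exponent. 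The ratio of the two is $q^{1/2}\exp\bigl(\kappa_{n-1}^2(q-1)/(2q^{n+1})\bigr)$, which is unbounded unless $\kappa_n=O(q^{n/2})$---and the hypothesis forces the opposite, $\kappa_n/q^{n/2}\to\infty$. For example, with $\kappa_n=q^{(n+1)/2}\sqrt{3\log n}$ the original series converges like $\sum n^{-3/2}$, while your modified series behaves like $\sum n^{-3/(2q)}$ and diverges for every $q\ge 3/2$.

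The paper repairs precisely this point by refining the partition: each $[q^n,q^{n+1}]$ is split into an interior block $J_n=[q^n+l_n,\,q^{n+1}-l_n]$ and short boundary blocks $D_n=[q^n-l_{n-1},\,q^n+l_n]$. On $J_n$, the event $\{\alpha_n<l_n\}$ forces $t+B_1(t)\in[q^n,q^{n+1}]$, so only $\kappa_n$ is needed and the hypothesis applies verbatim. On $D_n$ the time horizon is $q^n+O(l_n)\sim q^n$ rather than $q^{n+1}$, so the Gaussian tail with threshold $\kappa_{n-1}$ yields $(q^{n/2}/\kappa_{n-1})\exp\bigl(-\kappa_{n-1}^2/(2q^n)\bigr)$, which is exactly the $(n{-}1)$-st hypothesis term. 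With this refinement your argument (including the $D$-and-Markov gluing) goes through for every $q>1$; without it, it does not.
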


\begin{proof}
Assume that this is not the case. Then, for any large $T>0$, the
trajectory $G'_t=(T+t+B_1(t),B_2(t))$ hits $\Gamma$ almost surely
(warning: once again, $G'_t$ is not a derivative in time). Take $T$
large and introduce the geometric decomposition in time $
I_n=[q^n,q^{n+1}), n\in \mathbb{N}$. We also need
$$
J_n=[q^n+l_n,q^{n+1}-l_{n}], \quad D_n=[q^n-l_{n-1},q^n+l_n]
$$
Consider the random numbers
$$
\alpha_n=\max\limits_{J_n} |B_1(t)|, \ \beta_n=\max\limits_{J_n}
|B_2(t)|, \  \gamma_n=\max\limits_{D_n} |B_1(t)|, \  \delta_n=\max\limits_{D_n} |B_2(t)|
$$
We have
\begin{eqnarray}
\mathbb{P}(G_t' {\rm  \,\,hits\,\, } \Gamma ) &\leq&
\mathbb{P}\lt(G_t' \,\,{\rm hits}\,\, \Gamma\,\, {\rm for\,\,
some}\,\, t\in [0,q^{n_0}]\rt)   \nonumber \\
&+& \sum_{n>n_0} \mathbb{P}\lt(G_t' \,\,{\rm hits}\,\, \Gamma\,\, {\rm for\,\,
some}\,\, t\in J_n \rt)  \nonumber \\
&+& \sum_{n>n_0} \mathbb{P}\lt(G_t'\,\,{\rm hits}\,\, \Gamma\,\, {\rm for\,\,
some}\,\, t\in D_n \rt) \label{rhs}
\end{eqnarray}
Recall that \cite{Yuval}, Theorem 2.18, for $a>0$,
\[
\mathbb{P}(\max_{t\in [0,T]} B(t)>a)=2\mathbb{P}(B(T)>a),
\]
Then
\begin{eqnarray*}
\mathbb{P}\Bigl(G_t' \,\,{\rm hits}\,\, \Gamma\,\, {\rm for\,\,
some}\,\, t\in J_n\Bigr)&=&\mathbb{P}\Bigl(G_t' \,\,{\rm hits}\,\, \Gamma\,\, {\rm for\,\,
some}\,\, t\in J_n\Big| E_n\Bigr)\mathbb{P}(E_n)\\
&+&
\mathbb{P}\Bigl(G_t' \,\,{\rm hits}\,\, \Gamma\,\, {\rm for\,\,
some}\,\, t\in J_n\Big| E_n^c\Bigr)\mathbb{P}(E_n^c)
\end{eqnarray*}
where $E_n$ is the event that $\{\alpha_n<l_n\}$.
An easy computation shows
\begin{equation}\label{ee01}
\mathbb{P}(E_n^c)\lesssim \exp\Bigl( -(\log q+\epsilon/2)\log_q n\Bigr)\in
\ell^1
\end{equation}
and the estimate on $\beta_n$ gives
\[
\mathbb{P}\Bigl(G_t' \,\,{\rm hits}\,\, \Gamma\,\, {\rm for\,\,
some}\,\, t\in J_n\Big| E_n\Bigr)\lesssim
\frac{q^{n/2}}{\kappa_n}\exp\Bigl(
-\frac{\kappa_n^2}{2q^{n+1}}\Bigr)\in \ell^1
\]
by assumption.

Similarly, the estimates for $\gamma_n$ and $\delta_n$ yield
\begin{eqnarray*}
\mathbb{P}\lt(G_t' \,\,{\rm hits}\,\, \Gamma\,\, {\rm for\,\,
some}\,\, t\in D_n\rt)&\lesssim& \nu_n+\frac{\sqrt{q^{n}+2l_n}}{\kappa_{n-1}}\exp\lt(
-\frac{\kappa_{n-1}^2}{2(q^{n}+2l_n)}\rt)\\
&+&\frac{\sqrt{q^{n}+2l_n}}{\kappa_n}\exp\lt(
-\frac{\kappa_n^2}{2(q^{n}+2l_n)}\rt)
\end{eqnarray*}
where $\nu_n\in \ell^1$ as in \eqref{ee01}. The last two terms are in $\ell^1$ as well by our assumption.
 Thus, one can choose $n_0$ large enough to guarantee that
the second and the third terms in the right hand side of (\ref{rhs})
are arbitrarily small. Then, as long as $n_0$ is fixed, the first
term can be made arbitrarily small by taking $T$ large, and the theorem is proved.
\end{proof}

\nt
{\bf Remark.} If $\theta(t)=\sqrt{\gamma t\log\log t}$, then
the law of iterated logarithm shows our result is sharp in some
sense (choose $\gamma<2$ and $q=1+\epsilon$, $\epsilon>0$ small enough).
\medskip

We now turn to the three-dimensional case and consider
$G(t)=t\cdot e^1 +B(t)=(t+B_1(t), B_2(t), B_3(t))$,  and
$$
\Omega=\big\{(t, x_2, x_3): t\in (-1,\infty),  |x_2|< \theta_2(t), |x_3|< \theta_3(t) \big\},
$$
$\theta_{2, 3}$ being positive smooth functions. For $q_2, q_3>1$,
define
\begin{eqnarray*}
\kappa_{j,n}=\min_{[q_j^{n},q_j^{n+1}]} \theta_j(t)  &&
\end{eqnarray*}
where $j=2,3$, and $n=1,2,\ldots$.

The proof of Theorem \ref{below1} is based  on Theorem \ref{below} and the independence of
 Brownian motions $B_2$ and $B_3$.
\begin{theorem}\label{below1} If
$$
\sum_n \frac{q_j^{n/2}}{\kappa_{j,n}} \exp\Bigl(
-\frac{\kappa_{j,n}^2}{2q_j^{n+1}}\Bigr)<\infty
$$
for $j=2,3$, then  \  $\mathbb{P}(G_t \text{ does not hit } \Gamma\
\forall t)>0$.
\end{theorem}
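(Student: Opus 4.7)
My plan is to mimic the proof of Theorem \ref{below} almost verbatim, running the geometric-scale argument separately for each of the two independent transverse coordinates $B_2$ and $B_3$, and combining by a union bound. Arguing by contradiction, suppose $\mathbb{P}(G_t \text{ does not hit } \Gamma \ \forall t) = 0$. Then for every $T > 0$ the shifted trajectory $G'_t = (T+t+B_1(t), B_2(t), B_3(t))$ hits $\Gamma$ almost surely, so it suffices to show that for some sufficiently large $T$ this probability is strictly less than $1$.

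Since $\Gamma \subset \{|x_2| = \theta_2(t)\} \cup \{|x_3| = \theta_3(t)\}$, a union bound gives
\[
\mathbb{P}(G'_t \text{ hits } \Gamma) \leq \sum_{j=2}^{3} \mathbb{P}\bigl(\exists\, t > 0:\ |B_j(t)| \geq \theta_j(T+t+B_1(t))\bigr).
\]
For each fixed $j \in \{2,3\}$, the $j$-th summand depends only on the pair $(B_1, B_j)$, so it is exactly the quantity estimated in Theorem \ref{below} applied to the two-dimensional strip $\{(t, x_j) : |x_j| < \theta_j(t)\}$ with scaling parameter $q_j$. Running the proof of Theorem \ref{below} with geometric intervals $I^{(j)}_n = [q_j^n, q_j^{n+1})$, middle parts $J^{(j)}_n$, transition parts $D^{(j)}_n$, auxiliary lengths $l^{(j)}_n = \sqrt{(C_{q_j}+\epsilon) q_j^{n+1} \log_{q_j} n}$, and good events $E^{(j)}_n = \{\max_{t \in J^{(j)}_n} |B_1(t)| < l^{(j)}_n\}$, I would obtain the bound
\[
\mathbb{P}\bigl(\exists\, t \in J^{(j)}_n : |B_j(t)| \geq \theta_j(T+t+B_1(t))\bigr) \lesssim \mathbb{P}((E^{(j)}_n)^c) + \frac{q_j^{n/2}}{\kappa_{j,n}} \exp\Bigl(-\frac{\kappa_{j,n}^2}{2q_j^{n+1}}\Bigr),
\]
and an analogous estimate on $D^{(j)}_n$. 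The first term is summable in $n$ by the elementary computation \eqref{ee01}, and the second is summable by the theorem's hypothesis.

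Summing over $n \geq n_0$ and $j \in \{2,3\}$ and choosing $n_0$ large makes the tail contribution arbitrarily small. The residual contribution from $t \in [0, \max\{q_2^{n_0}, q_3^{n_0}\}]$ is then driven to zero by sending $T \to \infty$, exactly as in the last step of the proof of Theorem \ref{below}. This yields $\mathbb{P}(G'_t \text{ hits } \Gamma) < 1$ for suitable $T$, contradicting the initial assumption.

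The only substantive issue is the bookkeeping of two separate geometric decompositions associated with (possibly distinct) scales $q_2$ and $q_3$ while sharing the common drift coordinate $B_1$. This is not a genuine obstacle: since the two summands in the union bound depend on $B_1$ jointly with different $B_j$, only one-sided upper bounds on $|B_1|$ are used in each, so no joint control of $(B_2, B_3)$ is required. Independence of $B_2$ from $B_3$ and of both from $B_1$ enters only through the individual reflection-principle tail bounds for $B_j$, exactly as in the two-dimensional case, which is precisely the hint given in the paper.
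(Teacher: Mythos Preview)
Your proof is correct and matches the paper's approach: the paper gives no detailed argument beyond the one-line remark that Theorem~\ref{below1} follows from Theorem~\ref{below} together with the independence of $B_2$ and $B_3$, and your union-bound argument is a valid way to carry that out. The only minor observation is that your route in fact uses only $B_1\perp B_j$ for each $j$ separately (not $B_2\perp B_3$), together with the quantitative point---already implicit in the proof of Theorem~\ref{below}---that each two-dimensional hitting probability can be driven arbitrarily small, not merely below~$1$.
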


\section*{Appendix C: the case $d>3$}\label{s7}

Let, as before,  $x=(x_1,x')\in\br^d$ and $x'\in\br^{d-1}$. The Green's function of $(-\Delta+1)^{-1}$ on $L^2(\br^d)$ is given by
\[
G_0(x,y)=C_d|x-y|^{-\nu}K_{\nu}(|x-y|)
\]
and
\[
G_0(x,y)\approx \left\{
\begin{array}{lcl}
\dsp C_d^1\, e^{-|x-y|}{|x-y|^{-(d-1)/2}},& &|x-y|\to\infty,\\\\
 C_d^2\, |x-y|^{-(d-2)},& & |x-y|\to 0,
\end{array}
\right.
\]
see Section \ref{ss21} for the values of the constants $C^{1,2}_d$.
Once again, introduce potentials
\[
K^-(z,\xi)=2G_0(z,\xi) e^{\xi_1-z_1}, \qquad K^+(z,\xi)=K^-(\xi, z)
\]
and compare this to \eqref{e9501}. The corresponding capacity $C=C^\pm$ is then defined as in \eqref{def1} or \eqref{second}.

As for Hausdorff content, proceed like in Section \ref{ss33}. Set
$\Pi_r=[0,r]^d$ for $0<r\leq 1$, and
$\Pi_r=[0,r^2]\times[0,r]^{d-1}$ for $r>1$. We say that $H(r)$ is
the characteristic set if it can be translated into the parallelepiped
$\Pi_r$.  The Hausdorff content $M_h(E)$ of a set $E\subset \br^d$
is defined by relation \eqref{e9502} with
\begin{equation*}
h(r)=\left\{
\begin{array}{cc}
r^{d-2},& 0<r\leq 1\\
r^{d-1},& r>1
\end{array}\right.\label{mes_}
\end{equation*}
\begin{proposition}\label{content_}
For any set $E\subset\br^d$ and the above measure function $h$, we have
\[
C(E)\lesssim M_h(E)
\]
\end{proposition}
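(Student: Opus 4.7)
The plan is to imitate the proof of Proposition \ref{content} for $d=3$, updating every exponent to the ambient dimension. The three ingredients are subadditivity of the modified capacity, a microscopic bound $C(H(r)) \lesssim r^{d-2}$ for $r \leq 1$, and a macroscopic bound $C(H(r)) \lesssim r^{d-1}$ for $r > 1$. Granted these, any covering $\{H(r_j)\}$ of $E$ by characteristic sets yields
\[
C(E) \leq \sum_j C(H(r_j)) \lesssim \sum_j h(r_j),
\]
and taking the infimum over coverings gives $C(E) \lesssim M_h(E)$.

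First I would notice that the subadditivity argument in item 2 of Section \ref{ss31} uses only the characterization of $C$ via admissible measures and the symmetry $K^-(z,\xi) = K^+(\xi,z)$, both of which are available here by the very definitions of $K^\pm$ at the start of Appendix C. Thus subadditivity transports verbatim.

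For the microscopic bound, I would establish the $d$-dimensional analogue of Proposition \ref{p10}. The asymptotics recorded right after \eqref{vtor} give $G_0(z,\xi) \sim \tilde C_d |z-\xi|^{-(d-2)}$ as $z \to \xi$, while the factor $e^{\xi_1 - z_1}$ is squeezed between positive constants on any set of diameter $\lesssim 1$. So $K^-(z,\xi) \sim |z-\xi|^{-(d-2)}$ uniformly on such sets, which renders the modified potential comparable to the Newtonian one; accordingly $C(E) \sim C_W(E)$ whenever $\diam(E) \lesssim 1$, and in particular $C(H(r)) \sim r^{d-2} = h(r)$ for $r \leq 1$.

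The macroscopic bound is the main point of the proof and would be obtained by imitating item 3 of Section \ref{ss31}. I would place the test point $z_r = -r^2 e^1$ and compute $K^-(z_r,\xi)$ uniformly in $\xi \in \Pi_r = [0,r^2] \times [0,r]^{d-1}$. Writing $\xi_1 = r^2 - t$ with $t \in [0,r^2]$ and $|\xi'| \lesssim r$, one has $|z_r - \xi| \sim r^2$ and
\[
-|z_r - \xi| + (\xi_1 - z_{r,1}) = -\frac{|\xi'|^2}{2(2r^2 - t)} + O(r^{-2}) = O(1),
\]
since $2r^2 - t \geq r^2$. Combining with the large-argument asymptotics $G_0(z,\xi) \sim C^1_d e^{-|z-\xi|}|z-\xi|^{-(d-1)/2}$ gives $K^-(z_r,\xi) \sim r^{-(d-1)}$ uniformly in $\xi \in \Pi_r$. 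Consequently, for every $\mu \in \cp(\Pi_r)$,
\[
\sup_z U_\mu^-(z) \geq U_\mu^-(z_r) = \int K^-(z_r,\xi)\, d\mu(\xi) \gtrsim r^{-(d-1)},
\]
so that $C(\Pi_r) \lesssim r^{d-1} = h(r)$.

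The only step requiring a real computation is the macroscopic estimate, but it is the same expansion of $|z_r - \xi|$ around $2r^2$ already carried out in the $d=3$ case; no step depends on $d = 3$ beyond tracking the exponent $(d-1)/2$ in the tail of $G_0$. Once the two scale estimates are in place, subadditivity and passage to the infimum close the proof.
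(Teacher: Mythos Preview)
Your proposal is correct and follows the approach the paper implies: neither Proposition \ref{content} nor Proposition \ref{content_} is given an explicit proof in the paper, the only justification offered being ``Since the capacity is subadditive, one has a simple'' proposition. Your write-up supplies exactly the intended details---subadditivity plus the two scale estimates $C(H(r))\lesssim h(r)$, the microscopic one via comparison with Wiener capacity (as in Proposition \ref{p10}) and the macroscopic one via the test point $z_r=-r^2 e^1$ (as in item 3 of Section \ref{ss31}).
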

Furthermore, the definitions of contractions $\aleph$ and $\ppr'$ do
not change. Obviously, for $E\subset \br^d$,
 the expression $|\ppr' E|$ refers now to $(d-1)$-dimensional volume
 etc. With these conventions, the formulations of
 Theorems \ref{projx}, \ref{proj}, \ref{th3} are the same word-for-word.

\bigskip\nt
{\bf Acknowledgement.} \rm The first author was supported
by Alfred P. Sloan Research Fellowship and by the NSF grant
DMS-0758239. The second author was supported by ANR grants ANR-07-BLAN-024701 and ANR-09-BLAN-005801. We are grateful to Yuval Peres for informing us of \cite{tw}
and to Fedor Nazarov for numerous helpful discussions on the subject of this
paper.

\end{document}